\newcommand{\subgroup}{\leq}
    \newcommand{\BK}{{\mathbb {K}}} 
     \newcommand{\BN}{{\mathbb {N}}}
    \newcommand{\BQ}{{\mathbb {Q}}} \newcommand{\BR}{{\mathbb {R}}}
     \newcommand{\BZ}{{\mathbb {Z}}}
     \newcommand{\Aut}{{\mathrm{Aut}}}
     \newcommand{\GL}{{\mathrm{GL}}}
\def\-{^{-1}}
\newcommand{\delete}[1]{}
    \newcommand{\res}{\operatorname{res}} 
    \newcommand{\cd}{\operatorname{cd}} 
    \theoremstyle{plain}
\newtheorem{thm}{Theorem}[section]
\newtheorem{defn}[thm]{Definition} 
\newtheorem{defns}[thm]{Definitions}
\newtheorem{ex}[thm]{Example} 
\newtheorem{lem}[thm]{Lemma}
\newtheorem{prop}[thm]{Proposition}
\newtheorem{cor}[thm]{Corollary}
\newtheorem{rem}[thm]{Remark}
\newtheorem{observation}[thm]{Observation}
\newtheorem*{thm*}{Theorem}
\newtheorem*{rem*}{Remark}
\newtheorem*{cor*}{Corollary}
\newtheorem*{namedthm}{\namedthmname}
\newcounter{namedthm}
\renewenvironment{proof}[1][\proofname]{{\bfseries #1. \quad}}{\qed}
\newenvironment{named}[1]
  {\def\namedthmname{#1}%
   \refstepcounter{namedthm}%
   \namedthm\def\@currentlabel{#1}}
  {\endnamedthm}
\newcommand{\family}[1]{\mathcal{#1}}
\newcommand{\FIN}{\family{F}in}
\newcommand{\CYC}{\family{C}yc}
\newcommand{\FCYC}{\family{FC}yc} 
\newcommand{\VCYC}{\family{VC}yc}
    \numberwithin{equation}{section}
\newcommand{\uueg}{{\underline{\underline E}}G}
\newcommand{\uu}[1]{\uline{\uline{#1}}}
\renewcommand{\u}[1]{\uline{#1}}
\def\Proof{\noindent{\bf Proof.}\quad}
\def\qed{\hfill$\square$\smallskip}
\newenvironment{cond}[1]{%
  \vspace{1.2ex}\begin{enumerate}%
  \item%
}{%
  \end{enumerate}\vspace{1.2ex}%
}
\begin{document}

\title{Linear Groups, Conjugacy Growth, and Classifying Spaces for Families of Subgroups}

\author{Timm von Puttkamer}
\address{University of Bonn, Mathematical Institute, Endenicher Allee 60, 53115 Bonn, Germany}
\email{tvp@math.uni-bonn.de}

\author{Xiaolei Wu}
\address{Max Planck Institut f\"ur Mathematik, Vivatsgasse 7, 53111 Bonn, Germany }
\email{hsiaolei.wu@mpim-bonn.mpg.de}

\subjclass[2010]{20B07, 20J05}

\date{August, 2017}

\keywords{Classifying space, virtually cyclic subgroups, cyclic subgroups, conjugacy growth, linear groups, relatively hyperbolic groups.}

\begin{abstract}
Given a group $G$ and a family of subgroups $\mathcal{F}$, we consider its classifying space $E_{\mathcal F}G$ with respect to $\mathcal{F}$. When $\mathcal F = \VCYC$ is the family of virtually cyclic subgroups, Juan-Pineda and Leary conjectured that a group admits a finite model for this classifying space if and only if it is virtually cyclic. By establishing a connection to conjugacy growth we can show that this conjecture holds for linear groups. We investigate a similar question that was asked by L\"uck--Reich--Rognes--Varisco for the family of cyclic subgroups. Finally, we construct finitely generated groups that exhibit wild inner automorphims but which admit a model for $E_{\VCYC}(G)$ whose 0-skeleton is finite.

\end{abstract}

\maketitle

\section*{Introduction}

Given a group $G$, a family $\mathcal{F}$ of subgroups of $G$ is a set of subgroups of $G$ which is closed under conjugation and taking subgroups. We denote by $ E_{\mathcal{F}}(G)$ a $G$-CW-model for the classifying space for the family $\mathcal{F}$. The space $E_{\mathcal{F}}(G)$ is characterized by the property that the fixed point set $E_{\mathcal{F}}(G)^H$ is contractible for any $H \in \mathcal{F}$ and empty otherwise. Recall that a $G$-CW-complex $X$ is said to be finite if it has finitely many orbits of cells. Similarly, $X$ is said to be of finite type if it has finitely many orbits of cells of dimension $n$ for any $n$. We abbreviate $E_{\mathcal{F}}(G)$ by $\uueg$ for $\mathcal{F} = \VCYC$ the family of virtually cyclic subgroups, $\u EG$ for $\mathcal{F} = \FIN$ the family of finite subgroups and $EG$ for $\mathcal{F}$ the family consisting only of the trivial subgroup. In \cite[Conjecture 1]{JL}, Juan-Pineda and Leary formulated the following conjecture:

\begin{named}{Conjecture A}   \cite [Juan-Pineda and Leary]{JL}\label{ConjA}
Let $G$ be a group admitting a finite model for $\uueg$. Then $G$ is virtually cyclic.
\end{named}

This conjecture has been proven for hyperbolic groups in the same paper and its validity has been extended, for example to elementary amenable groups \cite{KMN}, acylindrically hyperbolic groups, 3-manifold groups, one-relator groups and CAT(0) cube groups \cite{vPW}. Note that a group $G$ admits a model for $\uueg$ with a finite $0$-skeleton if and only if the following holds

\begin{cond}{(BVC)}\label{cond:BVC}
\textit{$G$ has a finite set of virtually cyclic subgroups $\{V_1,V_2,\ldots, V_n\}$ such that every virtually cyclic subgroup of $G$ is conjugate to a subgroup of some $V_i$.}
\end{cond}

Following Groves and Wilson, we shall call this property BVC and the finite set $\{V_1,V_2,\\\ldots, V_n\}$ a \emph{witness} to BVC for $G$. So far almost all the proofs of \ref{ConjA} boil down to verifying whether the group has BVC. In fact, the following conjecture was made in \cite[Conjecture B]{vPW}

\begin{named}{Conjecture B}\label{conj-bvc}
Let $G$ be a finitely presented group which has BVC. Then $G$ is virtually cyclic.
\end{named}

Note that there are many finitely generated torsion-free groups with BVC that are not virtually cyclic, some groups with additional properties are constructed in \Cref{fg-group-BVC}. By establishing a connection between conjugacy growth and BVC, our first theorem verifies \ref{conj-bvc} and hence \ref{ConjA} for linear groups. 

\begin{named}{Theorem I}\label{thmA}[\ref{linear-bVCYC}]
A finitely generated linear group has BVC if and only if it is virtually cyclic.
\end{named}

The theorem also puts very strong restrictions on linear representations of a group with BVC.

\begin{named}{Corollary} [\ref{linear-rep-BVC}]
Let $\varphi \colon G \rightarrow L$ be a surjective homomorphism where $G$ is a finitely generated group with BVC and $L$ is linear. Then $L$ is virtually cyclic. 
\end{named}

Note that, strictly speaking, this does not directly follow from \ref{thmA} if the linear group $G$ is not virtually torsion-free, since the BVC property is not inherited by quotients in general (see \Cref{bvc-no-inheritance-to-quotients}). This was one reason for us to introduce the weaker notion $b\VCYC$, or more generally $b \family F$  for a family of subgroups $\family F$, that is inherited by quotients, see \Cref{basic}.

When $\family{F} = \CYC$, the family of cyclic subgroups, a similar question was asked by  L\"uck--Reich--Rognes--Varisco \cite[Question 4.9]{LRRV}: 

\begin{named}{Question A} \label{quesC}
Does a group $G$ have a model of finite type for $E_{\CYC}G$ if and only if $G$ is finite, cyclic or infinite dihedral?
\end{named}

The case of the infinite dihedral group was missing in the original formulation of the question in \cite{LRRV} but has to be included as we show in \Cref{inf-dih-fintype}. 

Similar to the case of the family of virtually cyclic subgroups, a group $G$ admits a model for $E_{\mathcal{C}yc}G$ with a finite $0$-skeleton if and only if the following holds

\begin{cond}{($b \CYC$)}\label{cond:bCYC}
\textit{ $G$ has a finite set of  cyclic subgroups $\{C_1,C_2,\ldots, C_n\}$ such that every  cyclic subgroup of $G$ is conjugate to a subgroup of some $C_i$.}
\end{cond}

It turns out that once we answer \ref{quesC} positively for virtually cyclic groups, most of the proofs of \ref{ConjA} also apply to \ref{quesC}. In fact, we prove in \Cref{cyclic-EF}  the following

\begin{named}{Theorem II}\label{thm:ecyc-g-conjecture}
For the following classes of groups, \ref{quesC} has a positive answer.
\begin{enumerate}
    \item elementary amenable groups
    \item one-relator groups
    \item acylindrically hyperbolic groups
    \item 3-manifold groups
    \item CAT(0) cube groups and
    \item linear groups
\end{enumerate}
\end{named}

In analogy to \ref{conj-bvc}, we can phrase the following
\begin{named}{Question B}\label{ques:bCYC}
Suppose $G$ is a finitely presented group which has $b \CYC$. Is $G$ necessarily finite, cyclic, or infinite dihedral?
\end{named}

Once more, we obtain a proof of \ref{thm:ecyc-g-conjecture} essentially by answering \ref{ques:bCYC} with the exception of the class of elementary amenable groups. Here, we additionally rely on stronger finiteness conditions that are imposed on the group by having a model of finite type for $E_{\CYC}(G)$.

In the last section of this paper we construct groups which can serve as counterexamples for various reasonable questions regarding the BVC resp. $b\CYC$ property. For example, in \Cref{VC--undistorted-linear} we show that a finitely generated group with BVC whose infinite cyclic subgroups are quasi-isometrically embedded can have at most linear conjugacy growth. Part (c) of the following theorem shows that one cannot dispense with the assumption on quasi-isometrically embedded infinite cyclic subgroups. The main tools for the constructions are HNN extensions and small cancellation theory over relatively hyperbolic groups as developed in \cite{Os} and \cite{HuOs}. 

\begin{named}{Theorem III}
\begin{enumerate}
    \item  There exists a finitely generated torsion-free group $G$ with a finite index subgroup $H$ such that $H$ has $b\CYC$ but $G$ does not. [\ref{finite-index-over-bvc}]
    \item There exists a finitely generated torsion-free group $G = H \rtimes \BZ$ such that $G$ has $b\CYC$ but $H$ does not. [\ref{bvc-fg-extension-by-integers}]
    \item  There exists a finitely generated torsion-free group $G$ with $b\CYC$ that has exponential conjugacy growth. [\ref{bvc-exp-conj-growth}]
\end{enumerate}
\end{named}

Note that Leary and Nucinkis \cite{LN03} give examples of groups $H\leq G$ where $H$ is a finite index subgroup of $G$ such that $H$ has a finite model for $EH$ ($=\u EH$ here) but $G$ has infinitely many conjugacy classes of finite subgroups.

Results of this paper will also appear as part of the first author's thesis.

\textbf{Acknowledgements.} The first author was supported by an IMPRS scholarship of the Max Planck Society. The second author would like to thank the Max Planck Institute for Mathematics at Bonn for its support. We also would like to thank Denis Osin for helpful discussions regarding the proof of \cite[Theorem 7.2]{HuOs}.

\section{Groups Admitting a Finite Model for $\uueg$ and Property $b\mathcal{F}$}\label{basic}
In this section we first review some properties of and results on groups admitting a finite model for $\uueg$. Most of this material is taken directly from \cite[Section 1]{vPW}. Afterwards, we introduce the notion $b\mathcal{F}$ for a family of subgroups $\mathcal{F}$, which, for $\family F = \VCYC$ is slightly weaker but more flexible than the BVC notion. 

We summarize the properties of groups admitting a finite model for $\uueg$ as follows

\begin{prop}
Let $G$ be a group admitting a finite model for $\uueg$, then

\begin{enumerate}
    \item $G$ has BVC.

    \item $G$ admits a finite model for $\underline{E}G$. 

    \item For every finite subgroup of $H \subset G$, the Weyl group $W_GH$ is finitely presented and of type $FP_{\infty}$. Here $W_GH = N_G(H)/H$, where $N_G(H)$ is the normalizer of $H$ in $G$.

    \item $G$ admits a model of finite type for $EG$. In particular, $G$ is finitely presented.

\end{enumerate}

\end{prop}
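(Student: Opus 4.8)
The plan is to establish each of the four assertions by reducing a finite $G$-CW-model for $\uueg$ to more tractable finiteness statements, working from the definitions of the classifying spaces and the relations among the families $\FIN \subset \VCYC$. I would treat the four items somewhat independently, since they extract different kinds of information from the existence of a finite model.

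\textbf{Part (a).} First I would observe that BVC is exactly the statement that the $0$-skeleton of a model for $\uueg$ may be taken finite, as recorded in \ref{cond:BVC}. A finite model has, in particular, finitely many orbits of $0$-cells, so its $0$-skeleton is already a finite $G$-set; the isotropy groups of these orbits are virtually cyclic (being in $\VCYC$ since the fixed points are nonempty), and the defining contractibility of $\uueg^H$ for every $H \in \VCYC$ forces each virtually cyclic $H$ to fix some vertex, hence to be subconjugate to one of the finitely many vertex stabilizers. Taking $\{V_1,\dots,V_n\}$ to be these stabilizers gives a witness to BVC directly.

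\textbf{Parts (b) and (d).} For these I would exploit the general principle that a model for $E_{\mathcal F}(G)$ restricts to a model for $E_{\mathcal G}(G)$ on the appropriate fixed-point data whenever $\mathcal G \subset \mathcal F$. Concretely, I would build a model for $\u EG$ from the finite model for $\uueg$ by a pushout/pasting construction: over each orbit of cells with virtually cyclic (but infinite) stabilizer $V$, one glues in a finite model for $\underline{E}V$, using that a finitely generated virtually cyclic group admits a finite model for its own $\underline E$ (it is either finite, or virtually $\BZ$, where an explicit finite model exists). Since the original complex is finite and each virtually cyclic stabilizer contributes only finitely many additional orbits of cells, the resulting $\u EG$ is finite, giving (b). For (d), a finite (or even finite-type) model for $\u EG$ yields a finite-type model for $EG$ by the same coning construction over the finite stabilizers, and finite type for $EG$ forces finite presentability of $G$ because $\pi_1$ and $H_2$ finiteness conditions follow from finitely many cells in dimensions $\le 2$.

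\textbf{Part (c).} This is the step I expect to be the main obstacle, since it is the only one asserting a nontrivial finiteness property ($FP_\infty$ and finite presentation) of a derived group $W_G H = N_G(H)/H$ rather than merely manipulating models. The key input is that fixed-point sets of $E_{\mathcal F}(G)$ under a finite subgroup $H$ carry an action of $W_G H$ and serve as a model for the corresponding classifying space of $W_G H$ with respect to the induced family. I would argue that $\uueg^H$ is a finite $W_G H$-CW-complex which is a model for $\underline{\underline E}(W_G H)$ (or at least for a family containing the trivial subgroup with the right finiteness), so that the finiteness of the $G$-model descends to finiteness of the $W_G H$-model; the finitely-many-orbits condition is preserved because $H$ is finite, keeping the number of $W_G H$-orbits of fixed cells finite. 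From a finite-type model for $E(W_G H)$ one then reads off that $W_G H$ is of type $FP_\infty$ and finitely presented. The delicate point here is checking that passing to $H$-fixed points genuinely produces a model with the claimed universal property and that the stabilizers and orbit-counts behave as needed; this is where I would expect to invoke the detailed structure theory cited from \cite[Section 1]{vPW} rather than reprove it from scratch.
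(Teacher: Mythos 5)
The paper states this proposition without proof, referring to \cite[Section 1]{vPW}, and your sketch correctly reconstructs the standard arguments behind those citations: reduction to vertex stabilizers for (a), the transitivity principle with finite models for $\underline{E}V$ (resp.\ finite-type models for $EF$) for (b) and (d), and $H$-fixed-point sets with their $W_GH$-actions for (c). The one imprecision is in (c): the finiteness of the number of $W_GH$-orbits of cells in the fixed-point set is not "because $H$ is finite" but because each cell stabilizer contains only finitely many conjugacy classes of subgroups conjugate to $H$ (clear for finite stabilizers, and true for virtually cyclic ones since they have finitely many conjugacy classes of finite subgroups) --- a point that becomes immediate if you first pass to the finite model for $\underline{E}G$ produced in (b) and take $H$-fixed points there.
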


\begin{rem}
If one replaces finite by finite type in the assumptions of the above proposition, then the conclusions still hold if one also replaces finite by finite type in (b).
\end{rem}

\begin{lem} \cite[Lemma 1.3]{vPW} \label{BVCfinitezeroskeleton}
Let $G$ be a group. There is a model for $\uueg$ with finite $0$-skeleton if and only if $G$ has BVC.
\end{lem}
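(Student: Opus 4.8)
The plan is to establish both directions of the biconditional in \Cref{BVCfinitezeroskeleton} by exploiting the defining property of the classifying space $E_{\mathcal{F}}(G)$, namely that its fixed point sets $E_{\mathcal{F}}(G)^H$ are contractible for $H \in \mathcal{F}$ and empty otherwise. The key bridge between the homotopy-theoretic data (a finite $0$-skeleton) and the algebraic condition BVC is the identification of the $0$-cells with their stabilizers: a single orbit $G/V$ of $0$-cells, with $V$ virtually cyclic, contributes points whose stabilizers are exactly the conjugates of subgroups of $V$.

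First I would prove the forward direction. Suppose we have a model $X$ for $\uueg$ with finite $0$-skeleton $X^{(0)}$. Since $G$ acts cellularly, $X^{(0)}$ decomposes into finitely many orbits $G/V_1 \sqcup \cdots \sqcup G/V_n$, where each stabilizer $V_i$ of a $0$-cell is virtually cyclic because $X^{V_i} \supseteq X^{(0), V_i} \ne \emptyset$ forces $V_i \in \VCYC$. I claim $\{V_1, \ldots, V_n\}$ is a witness to BVC. Let $V \leq G$ be any virtually cyclic subgroup. Then $X^V$ is nonempty (indeed contractible), and in particular it contains a $0$-cell, since in a $G$-CW-complex the fixed point set $X^V$ is a subcomplex whose $0$-skeleton is $(X^{(0)})^V$, and contractibility forces this to be nonempty. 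Thus $V$ fixes some $0$-cell $gV_i$, which means $V \subseteq g V_i g^{-1}$, i.e. $V$ is conjugate to a subgroup of some $V_i$. Hence $G$ has BVC.

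For the converse, suppose $G$ has BVC with witness $\{V_1, \ldots, V_n\}$. I would build the $0$-skeleton directly as $X^{(0)} = \coprod_{i=1}^n G/V_i$, a finite $G$-CW-complex of dimension $0$, and then argue that this extends to a full model for $\uueg$. The extension step is the standard fact that any $G$-CW-complex can be equivariantly completed to a model for $E_{\mathcal{F}}(G)$ provided its cell stabilizers all lie in $\mathcal{F}$ and the relevant fixed point sets are suitably connected up in higher dimensions; concretely one attaches free $G$-cells (and $G/H$-cells for $H \in \VCYC$) of increasing dimension to kill homotopy groups of each fixed point set $X^H$, making $X^H$ contractible for every $H \in \VCYC$ while keeping $X^H = \emptyset$ for $H \notin \VCYC$. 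The crucial point that makes the $0$-skeleton come out finite is precisely the BVC hypothesis: for every $V \in \VCYC$ we have $(X^{(0)})^V \ne \emptyset$, since $V \subseteq g V_i g^{-1}$ gives a fixed $0$-cell, so each $X^V$ starts out nonempty before we attach higher cells and there is no obstruction at level $0$.

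The main obstacle I expect is the converse direction, specifically justifying the inductive attachment of higher cells cleanly as a citation or short argument rather than reproving the general existence theory for classifying spaces of families. The honest approach is to invoke the standard construction of $E_{\mathcal{F}}(G)$ (for instance via the bar-type or Milnor-join model, or Lück's results) together with the observation that finiteness of the $0$-skeleton is governed entirely by BVC, while the higher skeleta may be infinite and are irrelevant to the claim. Since the statement only concerns the $0$-skeleton, I would emphasize that the nontrivial content lies wholly in the orbit-stabilizer translation, and that the result is quoted from \cite[Lemma 1.3]{vPW}; thus the cleanest writeup cites that lemma and supplies the short fixed-point argument above as the conceptual justification.
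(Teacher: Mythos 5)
Your proof is correct and follows essentially the same route as the paper's (the paper defers to \cite[Lemma 1.3]{vPW}, whose argument is exactly your orbit--stabilizer translation for the forward direction and, for the converse, the standard inductive attachment of higher cells via L\"uck's construction \cite[Proposition 2.3]{Lu89}, which this paper also invokes for the $\CYC$ analogue). The one step worth making explicit in a writeup is that a nonempty fixed-point subcomplex $X^V$ must contain a $0$-cell, which you do state; otherwise nothing is missing.
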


The following structure theorem about virtually cyclic groups is well known, see for example \cite[Proposition 4]{JL} for a proof.

\begin{lem}  \label{vcstru}
Let $G$ be a virtually cyclic group. Then $G$ contains a unique maximal normal finite subgroup $F$ such that one of the following holds

\begin{enumerate}

\item the finite case, $G =F$;
\item the orientable case, $G/F$ is the infinite cyclic group;
\item the nonorientable case, $G/F$ is the infinite dihedral group.

\end{enumerate}

\end{lem}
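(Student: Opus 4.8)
The plan is to produce the finite normal subgroup $F$ canonically, establish its uniqueness and maximality by a counting argument, and then identify the quotient $G/F$. First note that if $G$ is finite we simply take $F=G$, landing in case (a), so assume $G$ is infinite. Then $G$ contains an infinite cyclic subgroup of finite index, and intersecting all of its (finitely many) conjugates yields a \emph{normal} infinite cyclic subgroup $C=\langle t\rangle$ of finite index; set $Q=G/C$, a finite group. Any finite subgroup $H\le G$ meets the torsion-free group $C$ trivially, hence embeds into $Q$, so all finite subgroups have order at most $|Q|$. Since the product $N_1N_2$ of two finite normal subgroups is again finite and normal, a finite normal subgroup of maximal order must contain every finite normal subgroup; this produces the unique maximal finite normal subgroup $F$.

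It remains to show that $G/F$ is trivial, infinite cyclic, or infinite dihedral. I would first reduce to the case $F=1$: the quotient $H:=G/F$ is again virtually cyclic, since the image of $C$ is cyclic of finite index, and by maximality of $F$ the group $H$ has no nontrivial finite normal subgroup, because the preimage in $G$ of such a subgroup would be a finite normal subgroup strictly larger than $F$. If $H$ is finite it is its own finite normal subgroup, forcing $H=1$ and case (a). So suppose $H$ is infinite and choose, as above, a normal infinite cyclic $C'=\langle t\rangle\le H$ of finite index.

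Conjugation on $C'$ gives a homomorphism $\epsilon\colon H\to\Aut(C')\cong\{\pm1\}$; let $H^{+}=\ker\epsilon$, so that $C'$ is central in $H^{+}$ and of finite index. By Schur's theorem a center-by-finite group has finite commutator subgroup, and since the torsion subgroup of $H^{+}$ is characteristic, hence normal in $H$, it must be trivial. Thus $H^{+}$ is torsion-free with finite commutator subgroup, so $H^{+}$ is abelian, and being torsion-free, finitely generated and virtually $\BZ$ it is infinite cyclic. If $\epsilon$ is trivial then $H=H^{+}\cong\BZ$, the orientable case (b). If $\epsilon$ is onto then $H$ is an extension of $\BZ/2$ by $\BZ\cong H^{+}$ on which the generator acts by inversion; lifting that generator to $s$ and writing $s^{2}=t^{k}$, conjugation by $s$ fixes $s^{2}$ while inverting $t^{k}$, forcing $k=0$, so $s^{2}=1$ and $H\cong\langle t,s\mid s^{2}=1,\ sts^{-1}=t^{-1}\rangle$ is infinite dihedral, the nonorientable case (c).

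The main obstacle is the structural analysis of the quotient rather than the bookkeeping around $F$: the crux is verifying that, after killing the maximal finite normal subgroup, the index-one-or-two subgroup $H^{+}$ is genuinely infinite cyclic. This is where Schur's theorem enters, and one must take care that the relevant torsion subgroup is characteristic, so that its triviality can be deduced from the hypothesis that $H$ has no nontrivial finite normal subgroup.
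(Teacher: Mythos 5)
Your proof is correct. Note that the paper does not prove this lemma at all: it records it as well known and cites \cite[Proposition 4]{JL}, where the standard route is via the theory of ends (an infinite virtually cyclic group is two-ended, hence admits an epimorphism onto $\BZ$ or $D_\infty$ with finite kernel). Your argument is a self-contained algebraic alternative: the normal core of a cyclic finite-index subgroup gives a normal infinite cyclic $C$ of finite index, the bound $|H|\leq [G:C]$ on finite subgroups plus the product trick $N_1N_2$ produces the unique maximal finite normal subgroup $F$, and Schur's theorem (center-by-finite implies finite derived subgroup) identifies the quotient. All the steps check out, including the two places that need a little care: the torsion elements of $H^{+}$ do form a subgroup precisely because $[H^{+},H^{+}]$ is finite (torsion is detected in the abelianization modulo a finite group), and that subgroup is finite because it meets the finite-index copy of $\BZ$ trivially; and your computation $s^2=t^k=t^{-k}$ correctly kills the potential nonsplit extension in the nonorientable case. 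The only cosmetic gap is that you invoke ``the torsion subgroup of $H^{+}$'' before having said why the torsion elements form a subgroup; the justification is exactly the finiteness of the commutator subgroup you have just established, so you should make that dependence explicit. What the ends-based proof buys is brevity given the background machinery; what yours buys is that it uses nothing beyond Schur's theorem and elementary group theory.
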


Note that the above lemma implies that a torsion-free virtually cyclic group is either trivial or infinite cyclic. Thus we have the following
\begin{cor}\label{torsionfreeBVC}
Let $G$ be a torsion-free group, then $G$ has BVC if and only if there exist elements $g_1,g_2,\ldots g_n$ in $G$ such that every element in $G$ is conjugate to a power of some $g_i$.
\end{cor}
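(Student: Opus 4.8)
The plan is to use \Cref{vcstru} to reduce the notion of BVC, which is phrased in terms of virtually cyclic subgroups, to a statement about cyclic subgroups and hence about individual elements. The crucial observation, noted immediately after \Cref{vcstru}, is that a torsion-free virtually cyclic group is either trivial or infinite cyclic. Since every subgroup of the torsion-free group $G$ is again torsion-free, the virtually cyclic subgroups of $G$ are precisely the trivial subgroup together with the infinite cyclic subgroups $\langle g\rangle$ with $g \neq 1$. This dictionary between subgroups and generators is the whole content of the argument.

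For the forward direction I would start with a witness $\{V_1, \ldots, V_n\}$ to BVC. Discarding the trivial members and applying the reduction above, I may assume each surviving $V_i$ is infinite cyclic, say $V_i = \langle g_i\rangle$. Given an arbitrary $g \in G$, the cyclic subgroup $\langle g\rangle$ is virtually cyclic, so by the witness property there is some $h \in G$ with $h\langle g\rangle h^{-1} \subseteq V_i$ for some $i$. In particular $hgh^{-1} \in \langle g_i\rangle$, whence $hgh^{-1} = g_i^k$ for some integer $k$; that is, $g$ is conjugate to a power of $g_i$. (If all the $V_i$ are trivial then $G$ itself is trivial and the statement holds vacuously.)

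For the converse, given elements $g_1, \ldots, g_n$ as in the statement, I would simply propose $\{\langle g_1\rangle, \ldots, \langle g_n\rangle\}$ as a witness to BVC. Each $\langle g_i\rangle$ is cyclic, hence virtually cyclic. Now let $V$ be any virtually cyclic subgroup of $G$. By the torsion-free reduction, $V$ is either trivial, in which case it is contained in every $V_i$, or infinite cyclic, say $V = \langle v\rangle$. Writing $hvh^{-1} = g_i^k$ for a suitable conjugate, I obtain $hVh^{-1} = \langle g_i^k\rangle \subseteq \langle g_i\rangle$, so $V$ is conjugate to a subgroup of $\langle g_i\rangle$. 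Hence BVC holds.

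Since the proof is essentially this translation between subgroups and their generators, I do not expect any serious obstacle. The only points requiring mild care are the bookkeeping of the direction of conjugation and the degenerate cases (the identity element, and the possibility that $G$ or some $V_i$ is trivial). The one genuinely substantive input is the structural \Cref{vcstru}, which collapses ``virtually cyclic'' to ``cyclic'' in the torsion-free setting and is precisely what licenses the passage from subgroups to powers of elements.
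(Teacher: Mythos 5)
Your proposal is correct and follows exactly the route the paper intends: the paper derives this corollary immediately from the remark that \Cref{vcstru} forces a torsion-free virtually cyclic group to be trivial or infinite cyclic, and your argument is just the careful write-up of that translation between virtually cyclic subgroups and powers of generators. No issues.
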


\begin{lem} \label{vircycorder}\cite[Lemma 1.6]{vPW}
Let $V$ be a virtually cyclic group and let $g,h \in V$ be two elements of infinite order, then there exist $p,q \in \BZ$ such that $g^p =h^q$. Furthermore, there exists $v_0 \in V$ such that for any $v\in V$ of infinite order there exist nonzero $p_0,p$ such that  $v_0^{p_0} = v^p$ with $\frac{p_0}{p} \in \BZ$.
\end{lem}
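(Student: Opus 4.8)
The plan is to reduce everything to the structure theorem \Cref{vcstru}: write $F$ for the unique maximal normal finite subgroup of $V$ and let $\pi \colon V \to V/F$ be the quotient map, where $Q := V/F$ is trivial, infinite cyclic, or infinite dihedral. If $V$ is finite there is nothing to prove, so I assume $Q$ is infinite. The crucial preliminary observation is that an element $x \in V$ has infinite order if and only if its image $\pi(x)$ has infinite order in $Q$: indeed, $\pi(x)^n = 1$ forces $x^n \in F$ and hence $x^{n|F|} = 1$. Moreover, every infinite-order element of $Q$ lies in the canonical infinite cyclic subgroup of $Q$ — all of $Q$ in the cyclic case, and the index-two rotation subgroup $\langle t \rangle$ in the dihedral case, since the reflections all have order two.

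For the first statement I would first show that $\langle g \rangle$ has finite index in $V$ whenever $g$ has infinite order. Since $\pi(g)$ has infinite order, $\langle \pi(g)\rangle$ is an infinite cyclic, hence finite-index, subgroup of $Q$; therefore $\pi^{-1}(\langle \pi(g)\rangle) = \langle g\rangle F$ has finite index in $V$, and since $F$ is finite we get $[\langle g\rangle F : \langle g\rangle] \le |F| < \infty$, so $[V : \langle g\rangle] < \infty$. Applying this to both $g$ and $h$, the intersection $\langle g\rangle \cap \langle h\rangle$ is a finite-index, hence infinite, subgroup of the infinite cyclic group $\langle g\rangle$. Any such subgroup has the form $\langle g^p\rangle$ with $p \neq 0$, and since $g^p \in \langle h\rangle$ we may write $g^p = h^q$; as $g^p$ has infinite order, $q \neq 0$ as well.

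For the second (uniform) statement I would fix once and for all an element $v_0 \in V$ with $\pi(v_0) = t$, a generator of the infinite cyclic part of $Q$, so that $\pi(v_0)$, and hence $v_0$, has infinite order. Given any $v \in V$ of infinite order, its image satisfies $\pi(v) = t^m$ for some nonzero $m$ by the observation above. The first statement applied to the pair $v_0, v$ produces nonzero integers $p_0, p$ with $v_0^{p_0} = v^p$. Projecting this relation under $\pi$ gives $t^{p_0} = t^{mp}$ in $Q$, and since $t$ has infinite order this forces $p_0 = mp$; thus $p_0/p = m \in \BZ$, as required, and the same $v_0$ works for every $v$.

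The only genuinely delicate points are bookkeeping ones: one must invoke \Cref{vcstru} to guarantee the clean trichotomy for $Q$, and in the infinite dihedral case one must remember that infinite-order elements are confined to the rotation subgroup, so that $\pi(v)$ is always a power of the single generator $t$. I expect the main obstacle to be the nonabelian finite kernel $F$ in the first part: a naive computation of the form $\pi(g^u h^{-s}) = 1 \Rightarrow g^u h^{-s} \in F$ does \emph{not} directly yield a power relation between $g$ and $h$, which is exactly why I route the argument through the finite-index intersection $\langle g\rangle \cap \langle h\rangle$ inside the torsion-free cyclic group $\langle g\rangle$ rather than through $F$ directly.
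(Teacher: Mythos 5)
Your argument is correct. Note that the paper does not reproduce a proof here — it cites \cite[Lemma 1.6]{vPW} — so there is nothing internal to compare against; but your route through \Cref{vcstru} is sound: the reduction to the observation that $\langle g\rangle$ has finite index in $V$ for any infinite-order $g$, the use of the finite-index intersection $\langle g\rangle\cap\langle h\rangle$ inside the torsion-free group $\langle g\rangle$ (correctly avoiding the trap of trying to extract a power relation from $g^uh^{-s}\in F$), and the choice of $v_0$ as a preimage of a \emph{generator} of the infinite cyclic part of $V/F$ (rather than of a proper power, which would break the divisibility condition $p_0/p\in\BZ$) are exactly the points that need care, and you handle all of them.
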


\begin{lem} \label{BVCfinitesubgroup}\cite[Lemma 1.7]{vPW}
If a group $G$ has BVC, then $G$ has finitely many conjugacy classes of finite subgroups. In particular, the order of finite subgroups in $G$ is bounded.
\end{lem}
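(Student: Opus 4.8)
The plan is to use the BVC hypothesis to reduce the statement to a fact about a single virtually cyclic group, and then to analyze the finite subgroups of such a group by means of the structure theorem \Cref{vcstru}. Let $\{V_1,\ldots,V_n\}$ be a witness to BVC for $G$. Since every finite group is virtually cyclic (the trivial subgroup has finite index), any finite subgroup $H\le G$ is virtually cyclic and hence, by BVC, is $G$-conjugate to a subgroup of some $V_i$. Because $V_i$-conjugacy implies $G$-conjugacy, it therefore suffices to show that each $V_i$ has only finitely many conjugacy classes of finite subgroups: choosing finitely many representatives in each $V_i$ then yields finitely many $G$-conjugacy classes covering all finite subgroups of $G$. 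The ``in particular'' clause is then automatic, as conjugate subgroups have equal order and only finitely many orders can occur.

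Thus the crux is to prove that a virtually cyclic group $V$ has finitely many conjugacy classes of finite subgroups. Here I would invoke \Cref{vcstru} to fix the maximal finite normal subgroup $F\trianglelefteq V$, write $\pi\colon V\to Q:=V/F$, and split into the three cases for $Q$. If $Q$ is finite then $V$ is finite and there is nothing to prove. If $Q\cong\BZ$, then $Q$ is torsion-free, so every finite subgroup lies in $\ker\pi=F$, and $F$ has only finitely many subgroups.

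The substantive case is $Q\cong D_\infty$. The finite subgroups of $D_\infty$ are the trivial group and the order-two subgroups generated by reflections, the latter forming exactly two conjugacy classes. Given a finite subgroup $H\le V$, either $\pi(H)=1$, so $H\le F$ (finitely many possibilities), or $\pi(H)$ has order two, in which case $H\le K:=\pi^{-1}(\pi(H))$, a finite subgroup of order $2\lvert F\rvert$. Since $F$ is normal, conjugation satisfies $g\,\pi^{-1}(R)\,g^{-1}=\pi^{-1}\!\big(\pi(g)\,R\,\pi(g)^{-1}\big)$, so the assignment $R\mapsto\pi^{-1}(R)$ carries the $D_\infty$-conjugacy action on reflection subgroups to the $V$-conjugacy action on the groups $K$; hence the various $K$ fall into at most two $V$-conjugacy classes. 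As each $K$ is finite, it has finitely many subgroups, and a conjugacy between two of the $K$'s sends subgroups to subgroups. Combining this with the bounded family of subgroups of $F$ shows that $V$ has finitely many conjugacy classes of finite subgroups, completing the reduction.

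The step I expect to be the main obstacle is precisely the bookkeeping in the infinite dihedral case: although $D_\infty$ contains infinitely many reflection subgroups, one must verify that their preimages in $V$ are genuinely permuted into only finitely many $V$-conjugacy classes, and, crucially, that this controls the conjugacy classes of the finite subgroups \emph{contained} in these preimages, not merely the preimages $K$ themselves. The remaining cases and the passage from $V_i$-conjugacy to $G$-conjugacy are routine.
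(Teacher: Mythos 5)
Your proof is correct, and it follows the standard route: reduce via the BVC witness to showing that a single virtually cyclic group has finitely many conjugacy classes of finite subgroups, then settle that by the structure theorem (\Cref{vcstru}), with the only nontrivial bookkeeping in the $D_\infty$ case handled correctly by the equivariance of $R\mapsto\pi^{-1}(R)$. The paper itself gives no proof here but imports the statement from \cite[Lemma 1.7]{vPW}, whose argument is essentially the one you give.
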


In a group $G$, we call an element $g$ \emph{primitive} if it cannot be written as a proper power. Note that a primitive element is necessarily of infinite order. \Cref{torsionfreeBVC} implies the following

\begin{lem} \label{primitiveBVC}
Let $G$ be a torsion-free group. If $G$ has infinitely many conjugacy classes of primitive elements, then $G$ does not have BVC.
\end{lem}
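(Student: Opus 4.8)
The plan is to prove the contrapositive: assuming $G$ is torsion-free and has BVC, I will show that $G$ has only finitely many conjugacy classes of primitive elements. By \Cref{torsionfreeBVC}, the BVC hypothesis furnishes finitely many elements $g_1, g_2, \ldots, g_n \in G$ with the property that every element of $G$ is conjugate to some power $g_i^k$.

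The key observation is that primitivity is a conjugacy invariant: conjugation by a fixed element is an automorphism of $G$, and an automorphism carries proper powers to proper powers, so it sends primitive elements to primitive elements. Consequently, if a primitive element $g$ is conjugate to $g_i^k$, then $g_i^k$ is itself primitive. But for $|k| \geq 2$ the element $g_i^k$ is visibly a proper power of $g_i$, hence not primitive, while $g_i^0 = e$ has finite order and so is not primitive either. Therefore, among all powers of a fixed $g_i$, the only candidates for primitivity are $g_i$ and $g_i^{-1}$.

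Combining these two points, every primitive element of $G$ is conjugate to one of the $2n$ elements $g_1^{\pm 1}, \ldots, g_n^{\pm 1}$. In particular $G$ has at most $2n$ conjugacy classes of primitive elements, which is finite. This establishes the contrapositive, and hence the lemma.

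There is no substantial obstacle here; the whole argument rests on the elementary fact that $g_i^k$ is a proper power whenever $|k| \ge 2$, so each witness $g_i$ can account for at most the two conjugacy classes represented by $g_i^{\pm 1}$. The only points meriting a moment's care are that the ``powers'' supplied by \Cref{torsionfreeBVC} are understood to include negative exponents (so that $g_i^{-1}$ is genuinely available as a representative) and that the zero power is excluded because, being the identity, it fails to have infinite order.
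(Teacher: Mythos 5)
Your proof is correct and is exactly the argument the paper intends: it states the lemma as an immediate consequence of \Cref{torsionfreeBVC}, and your expansion (a primitive element conjugate to $g_i^k$ forces $g_i^k$ to be primitive, which rules out $|k|\neq 1$) is the natural way to fill that in. Your attention to the exponents $k=0$ and $k<0$ is careful and appropriate; nothing is missing.
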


\begin{lem} \cite[Lemma 5.6]{KMN} \label{finiteindexsubgroupbvc}
If a group $G$ has BVC, then any finite index subgroup also has BVC.
\end{lem}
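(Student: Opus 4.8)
The plan is to start from a witness $\{V_1,\ldots,V_n\}$ to BVC for $G$ and manufacture a finite witness for the finite-index subgroup $H$ by intersecting conjugates of the $V_i$ with $H$, the point being to control how many $H$-conjugacy classes of such intersections can occur.

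First I would observe that any virtually cyclic $W \le H$ is in particular a virtually cyclic subgroup of $G$, so by BVC there are $g \in G$ and an index $i$ with $gWg^{-1} \le V_i$, i.e.\ $W \le g^{-1}V_i g \cap H$. Each such intersection is itself virtually cyclic, since it is a subgroup of the virtually cyclic group $g^{-1}V_i g$, and it lies in $H$. Thus every virtually cyclic subgroup of $H$ is contained in one of the groups $g^{-1}V_i g \cap H$; the only issue is that a priori there are infinitely many of these as $g$ ranges over $G$.

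The key step is to show that, up to conjugacy in $H$, the subgroup $g^{-1}V_i g \cap H$ depends only on the double coset $V_i g H \in V_i \backslash G / H$. Indeed, replacing $g$ by $vg$ with $v \in V_i$ leaves $g^{-1}V_i g$ (and hence the intersection) unchanged because $v^{-1}V_i v = V_i$; and replacing $g$ by $gh$ with $h \in H$ conjugates the intersection by $h^{-1}$ inside $H$, since conjugation by an element of $H$ preserves $H$. Concretely, if $g = v g_{ij} h$ then $g^{-1}V_i g \cap H = h^{-1}\big(g_{ij}^{-1}V_i g_{ij} \cap H\big) h$. Because $[G:H]<\infty$, each double coset space $V_i \backslash G / H$ is finite (it is the set of orbits of $V_i$ acting by left translation on the finite set $G/H$), so for each $i$ there are only finitely many $H$-conjugacy classes among the groups $g^{-1}V_i g \cap H$.

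Finally I would choose double-coset representatives $g_{ij}$ and set $U_{ij} = g_{ij}^{-1}V_i g_{ij} \cap H$, a finite collection of virtually cyclic subgroups of $H$. The computation above shows that any virtually cyclic $W \le H$, with $W \le g^{-1}V_i g \cap H$ and $V_i g H = V_i g_{ij} H$, satisfies $hWh^{-1} \le U_{ij}$ for a suitable $h \in H$, so $W$ is $H$-conjugate to a subgroup of some $U_{ij}$; hence $\{U_{ij}\}$ is a witness to BVC for $H$. I expect the only real subtlety to be the bookkeeping in the double-coset argument --- in particular recognizing that left translation by $V_i$ and right translation by $H$ are exactly the two symmetries that leave $g^{-1}V_i g \cap H$ unchanged up to $H$-conjugacy, which is what forces the double coset (rather than a single or a triple coset) to be the correct invariant.
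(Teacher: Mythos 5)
Your proof is correct and is essentially the argument the paper itself uses for the analogous statement \Cref{finite-index-bf} (for the BVC version the paper simply cites \cite[Lemma 5.6]{KMN}): one intersects conjugates of the witness subgroups with the finite-index subgroup and uses finiteness of the coset space to extract a finite witness. The only cosmetic difference is that you organize the intersections by double cosets $V_i\backslash G/H$, whereas the proof of \Cref{finite-index-bf} just runs over the $[G:H]$ single coset representatives $Kg_i$ and conjugates by the $K$-part of $g=kg_i$; both are valid, and your double-coset bookkeeping merely yields a possibly smaller witness set.
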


\begin{defn} \label{defn-bfamily}
Let $\family F$ be a family of subgroups. For a natural number $n \geq 1$, we say that a group $G$ has property $n \mathcal F$ if there are $H_1, \ldots, H_n \in \mathcal F$ such that any cyclic subgroup of $G$ is contained in a conjugate of $H_i$ for some $i$. We say that $G$ has $b \family{F}$ if $G$ has $n \family{F}$ for some $n \in \BN$. 
\end{defn}

We are mostly interested in $b \VCYC$ as both $b\CYC$ and BVC imply $b \VCYC$, which leads to unified proofs when we deal with finiteness properties of $E_{\CYC}(G)$ and $\uu EG$. Note that for torsion-free groups all three notions agree. However, as the following two examples show, they generally do not coincide.

\begin{ex}
Consider the group $\BZ \times \BZ/2$. It is virtually cyclic, thus has $b \VCYC$ as well as BVC. But it does not have $b \CYC$. In fact, for any $n\geq 1$, let $C_n$ be the subgroup generated by $(2^n,1)$. Then for any $m\neq n$, $C_m$ cannot be conjugate to a subgroup of $C_n$. See \Cref{bc-vc} for more information.
\end{ex}

\begin{ex}\label{ex-bcyc-but-not-bvc}
Taking $G = (\BZ/2)^{\infty}$ and applying \cite[Thereom 1.1]{Os}, one embeds $G$ into a finitely generated group with only three conjugacy classes. In particular, this group has $b\CYC$ and $b \VCYC$. But it cannot have BVC, since the orders of finite subgroups are not bounded (\Cref{BVCfinitesubgroup}).
\end{ex}

The following is immediate:
\begin{lem}\label{bfquotient}
Suppose the family $\family{F}$ is closed under quotients. If $\pi \colon G \to Q$ is an epimorphism and $G$ has $b \family{F}$, then $Q$ has $b \family{F}$.
\end{lem}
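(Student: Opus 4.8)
The plan is to take a witness for $b\family{F}$ in $G$ and push it forward under $\pi$, then check directly that its image witnesses $b\family{F}$ in $Q$. Concretely, since $G$ has $b\family{F}$ it has $n\family{F}$ for some $n$, so there exist $H_1,\ldots,H_n \in \family{F}$ such that every cyclic subgroup of $G$ lies in a conjugate of some $H_i$. I claim that $\pi(H_1),\ldots,\pi(H_n)$ witness $n\family{F}$ for $Q$, with the \emph{same} $n$.

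First I would verify that each $\pi(H_i)$ belongs to $\family{F}$. This is exactly the hypothesis that $\family{F}$ is closed under quotients: restricting $\pi$ gives an epimorphism $H_i \to \pi(H_i)$, realizing $\pi(H_i)$ as a quotient of the member $H_i \in \family{F}$, hence $\pi(H_i) \in \family{F}$.

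The heart of the argument is that cyclic subgroups of $Q$ lift along $\pi$. Given any cyclic subgroup $C = \langle q \rangle \leq Q$, choose, using surjectivity of $\pi$, an element $g \in G$ with $\pi(g) = q$; then $\pi(\langle g \rangle) = \langle q \rangle = C$. By the $n\family{F}$ property of $G$ applied to the cyclic subgroup $\langle g \rangle$, there are $x \in G$ and an index $i$ with $\langle g \rangle \leq x H_i x^{-1}$. Applying $\pi$ and using that homomorphisms carry conjugates to conjugates yields $C = \pi(\langle g \rangle) \leq \pi(x H_i x^{-1}) = \pi(x)\,\pi(H_i)\,\pi(x)^{-1}$, so $C$ is contained in a conjugate of $\pi(H_i)$ in $Q$. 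Thus every cyclic subgroup of $Q$ is contained in a conjugate of some $\pi(H_i)$, giving $n\family{F}$ and therefore $b\family{F}$ for $Q$.

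There is essentially no obstacle here; the only point requiring care is the bookkeeping in the definitions, namely that the defining condition of $n\family{F}$ quantifies over \emph{cyclic} subgroups rather than arbitrary subgroups. This is precisely what makes the lifting step work, since a cyclic subgroup of $Q$ is always the image of the cyclic subgroup generated by a single preimage of a generator — whereas an arbitrary subgroup of $Q$ need not lift to a subgroup of $G$ of comparable type. No finiteness or presentability hypotheses enter, and the bound $n$ transfers unchanged from $G$ to $Q$.
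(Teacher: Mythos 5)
Your proof is correct and is exactly the argument the paper has in mind: the lemma is stated there without proof as ``immediate,'' and pushing the witness $\{H_1,\ldots,H_n\}$ forward under $\pi$, using closure of $\family{F}$ under quotients for membership and a preimage of a generator to lift each cyclic subgroup of $Q$, is that immediate argument. Your remark that the quantification over \emph{cyclic} subgroups in the definition of $n\family{F}$ is what makes the lifting step work is precisely the right point of care.
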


\begin{lem}\label{finite-index-bf}
Let $K \subgroup G$ be a finite index subgroup and suppose $G$ has $b \family F$. Then $K$ also has $b \family F$.
\end{lem}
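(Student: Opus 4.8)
The plan is to run the standard finite-index (Mackey-type) double-coset argument, using only that a family is closed under conjugation and under passage to subgroups. Throughout, ``$K$ has $b\family F$'' is understood with respect to the restricted family $\{H \in \family F : H \leq K\}$ of subgroups of $K$.

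Suppose $G$ has $n\family F$, witnessed by $H_1,\dots,H_n \in \family F$, so that every cyclic subgroup of $G$ lies in some $G$-conjugate of some $H_i$. Since $r := [G:K] < \infty$, fix representatives $g_1,\dots,g_r$ of the right cosets, $G = \bigsqcup_{j=1}^r Kg_j$. For each pair $(i,j)$ put $K_{i,j} := K \cap g_j H_i g_j^{-1}$. As $g_j H_i g_j^{-1} \in \family F$ by closure under conjugation and $K_{i,j}$ is a subgroup of it, closure under subgroups gives $K_{i,j} \in \family F$, and clearly $K_{i,j} \leq K$. I claim these $nr$ subgroups witness property $nr\,\family F$ for $K$.

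The one computation that matters is that, for $k \in K$ and any $g \in G$, one has $K \cap (kg)H_i(kg)^{-1} = k\bigl(K \cap gH_ig^{-1}\bigr)k^{-1}$, which follows from $kKk^{-1}=K$. Thus the $K$-conjugacy class of $K \cap gH_ig^{-1}$ depends only on the right coset $Kg$. Now let $C \leq K$ be cyclic; regarded as a cyclic subgroup of $G$ it satisfies $C \leq gH_ig^{-1}$ for some $g$ and $i$, hence $C \leq K \cap gH_ig^{-1}$. Writing $g = kg_j$ with $k \in K$ yields $C \leq kK_{i,j}k^{-1}$, i.e.\ $k^{-1}Ck \leq K_{i,j}$, so $C$ is contained in a $K$-conjugate of $K_{i,j}$. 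This is exactly property $nr\,\family F$ for $K$.

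There is no serious obstacle here: the finiteness of the witness for $K$ is forced purely by finite index, since there are only $[G:K]$ right cosets to account for (one could collapse further to double cosets $K\backslash G/H_i$, but the crude count $nr$ already suffices). The only points needing a word of care are verifying that each intersection $K_{i,j}$ again lies in $\family F$---which is precisely where closure under conjugation and subgroups is used---and observing that it is enough to cover \emph{cyclic} subgroups of $K$, both of which are formal consequences of the definitions.
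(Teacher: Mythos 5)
Your proof is correct and follows essentially the same route as the paper's: take right coset representatives $g_1,\dots,g_r$, form the finitely many intersections $K \cap g_jH_ig_j^{-1}$ (which lie in $\family F$ by closure under conjugation and subgroups), and use the factorization $g = kg_j$ to conjugate a given cyclic subgroup of $K$ into one of them. The extra observations you include (the computation showing the $K$-conjugacy class depends only on the coset, and the possible refinement to double cosets) are fine but not needed.
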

\begin{proof}
Let $m = [G:K]$ and let $K g_i$ for $1 \leq i \leq m$ be the right cosets. Furthermore let $\{ H_1, \ldots, H_n \}$ be a witness of $b \family F$ for $G$. Then consider the following finite collection of subgroups of $K$ which lie in $\family F$:
$$
\{ g_i H_j g_i^{-1} \cap K \mid 1 \leq j \leq n, 1 \leq i \leq m  \}
$$
We claim that these form a witness to $b \family F$ for $K$. Let $C \subgroup K$ be some cyclic subgroup, then there exists some $g \in G$ such that $C \subgroup g H_j g^{-1}$ for some $j$. Write $g = k g_i$ for some $i$ and some $k \in K$. Then $k^{-1} C k \subgroup g_i H_j g_i^{-1} \cap K$.
\end{proof}

\begin{thm}\label{bVCYC-thm}
Let $G$ be a finitely generated group in one of the following classes
\begin{enumerate}
    \item \label{bVCYC-solvable} virtually solvable groups,
    \item \label{bVCYC-one-relator} one-relator groups,
    \item \label{bVCYC-acy-hyperbolic} acylindrically hyperbolic groups,
    \item 3-manifold groups,
    \item CAT(0) cube groups.
\end{enumerate}
If $G$ has $b \VCYC$, then $G$ is virtually cyclic. 
\end{thm}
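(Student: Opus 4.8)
The plan is to treat each of the five classes separately, since they admit quite different structural descriptions, but in every case to reduce to the key principle that $b\VCYC$ severely constrains how many ``independent'' directions of infinite order a group can have. The unifying heuristic is that a witness $\{V_1,\ldots,V_n\}$ to $b\VCYC$ forces every cyclic subgroup to sit inside one of finitely many virtually cyclic subgroups up to conjugacy; by \Cref{vircycorder} each $V_i$ has essentially a single commensurability class of infinite-order elements, so morally $G$ looks ``one-dimensional.'' I would exploit this together with the finiteness-of-finite-subgroups consequence (\Cref{BVCfinitesubgroup}) class by class.

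\textbf{Virtually solvable groups.} First I would reduce to the torsion-free case: a finitely generated virtually solvable group is virtually torsion-free, and by \Cref{finite-index-bf} the $b\VCYC$ property passes to a torsion-free finite-index subgroup $K$, for which $b\VCYC$, $b\CYC$ and BVC all coincide. For $K$ torsion-free solvable I would argue by induction on the derived length, or directly analyze the abelianization: if $K$ surjects onto $\BZ^2$ (or onto an infinite-rank abelian group), one produces infinitely many conjugacy classes of primitive elements, contradicting \Cref{primitiveBVC}. More carefully, I expect the crux to be showing that a torsion-free solvable group with $b\VCYC$ must have Hirsch length one; one shows that any abelian subquotient of rank $\geq 2$ yields too many pairwise non-commensurable cyclic subgroups to be covered by finitely many virtually cyclic witnesses. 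A torsion-free solvable group of Hirsch length one is infinite cyclic, giving the conclusion.

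\textbf{The remaining classes.} For acylindrically hyperbolic groups I would use the standard fact that such a group contains infinitely many pairwise non-commensurable (indeed ``independent'') loxodromic elements generating free subgroups or at least giving infinitely many conjugacy classes of cyclic subgroups no single virtually cyclic subgroup can absorb; this directly violates $b\VCYC$, so an acylindrically hyperbolic group never has $b\VCYC$, and one checks that a virtually cyclic group is not acylindrically hyperbolic, so the statement is satisfied vacuously-plus-base-case. One-relator groups, $3$-manifold groups, and CAT(0) cube groups I would handle by a dichotomy: either the group is ``small'' (virtually solvable, virtually cyclic, or otherwise elementary) and falls under the solvable analysis, or it is acylindrically hyperbolic / contains a suitable hyperbolic or free-like subgroup, whence the previous paragraph applies. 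Concretely, for $3$-manifold groups I would invoke the geometric decomposition (a finitely generated $3$-manifold group is either virtually solvable or acylindrically hyperbolic, via hyperbolic or graph-manifold pieces); for one-relator groups I would use the Newman/Magnus theory distinguishing the torsion and torsion-free cases and the known acylindrical hyperbolicity of one-relator groups that are not elementary; and for CAT(0) cube groups I would use the Tits-alternative-style dichotomy (rank-one isometries give acylindrical hyperbolicity, else the group is virtually abelian or decomposes).

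\textbf{The main obstacle} will be the virtually solvable case, since it carries the real content: the other four classes are engineered so that non-elementary examples are acylindrically hyperbolic and hence excluded by an ``infinitely many independent loxodromics'' argument, but the solvable case requires an honest rank computation showing that $b\VCYC$ forces Hirsch length one. I expect the cleanest route is to prove the contrapositive for abelian groups first (an abelian group with $b\VCYC$ is virtually cyclic, because $\BZ^2$ visibly fails to be covered by finitely many cyclic subgroups up to the trivial conjugation), then bootstrap through the derived series using \Cref{finite-index-bf} and \Cref{bfquotient}, taking care that $\VCYC$ is closed under quotients so that $b\VCYC$ descends to quotients. The delicate point is controlling non-abelian solvable groups of Hirsch length $\geq 2$ (e.g.\ avoiding a false conclusion for groups like the Heisenberg group or solvable Baumslag--Solitar groups), where one must check directly that distortion does not let a single virtually cyclic subgroup swallow a two-dimensional lattice of cyclic subgroups.
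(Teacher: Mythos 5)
Your proposal has several genuine gaps. The most serious is in the virtually solvable case, which you correctly identify as carrying the real content: your opening reduction, ``a finitely generated virtually solvable group is virtually torsion-free,'' is false. The lamplighter group $\BZ/2 \wr \BZ$ is finitely generated and metabelian but every finite-index subgroup contains infinite torsion, so you cannot pass to a torsion-free subgroup of finite index and then run a Hirsch-length argument. The paper does not attempt a self-contained argument here at all: it imports the Groves--Wilson proof wholesale (a minimal-counterexample induction using the ascending chain condition on normal subgroups and the structure of just-non-virtually-cyclic solvable groups), checking only that its Lemmas 2.1--2.4 survive the replacement of BVC by $b\VCYC$ via \Cref{finite-index-bf} and \Cref{bfquotient}. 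Your sketch of ``abelian subquotients of rank $\geq 2$ give too many non-commensurable cyclic subgroups'' is the right intuition for the torsion-free subcase but does not constitute a proof, and you yourself flag distortion (Baumslag--Solitar, Heisenberg) as an unresolved delicate point.

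The dichotomy ``virtually solvable or acylindrically hyperbolic'' that you use for the remaining classes also fails in two of them. For 3-manifold groups, Seifert-fibered pieces over hyperbolic base orbifolds (e.g.\ unit tangent bundles of hyperbolic surfaces) have infinite center, hence are not acylindrically hyperbolic, and are not virtually solvable either; the paper handles this by passing to the quotient by the center and invoking \Cref{bfquotient}, which is exactly why that lemma is highlighted in its proof of part (d). For CAT(0) cube groups, rank rigidity leaves the product case (e.g.\ $F_2 \times F_2$), which is neither virtually solvable nor acylindrically hyperbolic; you write ``or decomposes'' but give no argument for that branch, whereas the paper relies on the separate product analysis in \cite[Section 4]{vPW}. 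Finally, for one-relator groups, the ``known acylindrical hyperbolicity of non-elementary one-relator groups'' is not an available result you can invoke; the paper instead splits on torsion, using Newman's Spelling Theorem to get hyperbolicity in the torsion case and a bespoke argument from \cite[Theorem 2.12]{vPW} in the torsion-free case. The acylindrically hyperbolic case itself is the one place where your outline matches the paper (infinitely many pairwise non-commensurable loxodromics that no finite list of virtually cyclic subgroups can absorb).
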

\begin{proof} 
\begin{enumerate}
    \item The proof works the same as \cite{GW2013} without much change. In fact \cite[Lemma 2.1]{GW2013} is available for $b \VCYC$ by \Cref{finite-index-bf}. \cite[Lemma 2.2]{GW2013} is still available via almost the same proof. Moreover, \cite[Lemma 2.3]{GW2013} can be easily deduced from \Cref{bfquotient}. \cite[Lemma 2.4]{GW2013} can be directly applied to the $b \VCYC$ case, since BVC is the same as $b\VCYC$ for a torsion-free group. 

    \item If $G$ contains torsion, the group is hyperbolic by Newman's Spelling Theorem  \cite{New68} and the claim follows from (c) below. Otherwise $G$ is torsion-free and the result follows from \cite[Theorem 2.12]{vPW}. One can also check that the proof of  \cite[Theorem 2.12]{vPW} indeed works for $b \VCYC$.
    
    \item The proof of \cite[Proposition 3.2]{vPW} showed that $G$ does not have $b \VCYC$. In fact, it was shown that there are infinitely many primitive conjugacy classes of elements in $G$ such that any two of them cannot be conjugated into a common virtually cyclic subgroup.
    
    \item The proof is almost the same as \cite[Proposition 3.6]{vPW}. \cite[Corollary 3.4]{vPW} now is replaced by \Cref{bfquotient} which is true for all groups.
    
    \item The proof given in \cite[Section 4]{vPW} can be carried over, observing that \cite[Lemma 4.6]{vPW} still holds with BVC replaced by $b\VCYC$.
\end{enumerate}
\end{proof}

\begin{cor}\label{BVChomology} If $G$ has $b\VCYC$ and surjects onto a finitely generated group $Q$ that lies in one of the classes described in \Cref{bVCYC-thm}, then $Q$ is virtually cyclic. In particular, the abelianization $H_1(G, \BZ)$ is finitely generated of rank at most one.
\end{cor}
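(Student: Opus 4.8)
The plan is to derive both assertions directly from \Cref{bfquotient} together with \Cref{bVCYC-thm}, the only preliminary observation being that the family $\VCYC$ is closed under quotients. To see the latter, note that if $V$ is virtually cyclic and $V \twoheadrightarrow W$ is surjective, then the image of a finite-index infinite cyclic (or trivial) subgroup of $V$ is a finite-index cyclic subgroup of $W$, so $W$ is again virtually cyclic. This is exactly the hypothesis needed to invoke \Cref{bfquotient}.

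First I would prove the main statement. Since $\VCYC$ is closed under quotients and $\pi \colon G \twoheadrightarrow Q$ is an epimorphism with $G$ having $b\VCYC$, \Cref{bfquotient} gives that $Q$ has $b\VCYC$. As $Q$ is finitely generated and lies in one of the five classes of \Cref{bVCYC-thm}, that theorem immediately yields that $Q$ is virtually cyclic.

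For the ``in particular'' clause I would apply the same mechanism to the abelianization $A := H_1(G,\BZ) = G/[G,G]$, which is a quotient of $G$ and hence, by \Cref{bfquotient}, also has $b\VCYC$. Because $A$ is abelian, conjugation is trivial, so a witness $\{H_1,\dots,H_n\}$ of $b\VCYC$ for $A$ satisfies $A = H_1 \cup \cdots \cup H_n$ with each $H_i$ virtually cyclic. By B.~H.~Neumann's lemma on coverings by subgroups, some $H_i$ must have finite index in $A$; since $H_i$ is virtually cyclic, $A$ is virtually cyclic, and an abelian virtually cyclic group is finitely generated with torsion-free rank at most one, which is the claim. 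When $G$ is itself finitely generated this last argument is unnecessary, since then $A$ is a finitely generated abelian --- hence virtually solvable --- group with $b\VCYC$, so \Cref{bVCYC-thm}\eqref{bVCYC-solvable} applies directly.

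The content of the corollary is essentially bookkeeping built on \Cref{bfquotient} and \Cref{bVCYC-thm}, so the main obstacle is a minor one: $G$ is not assumed finitely generated, so for the abelianization statement one cannot simply quote \Cref{bVCYC-thm}\eqref{bVCYC-solvable}, and must instead verify by hand that an abelian group with $b\VCYC$ is already virtually cyclic. The Neumann covering argument above is the only genuinely new step and resolves this point.
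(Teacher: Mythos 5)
Your proof is correct and, for the main assertion, is exactly the paper's intended derivation: the paper states this as an immediate corollary of \Cref{bfquotient} (after checking that $\VCYC$ is closed under quotients) combined with \Cref{bVCYC-thm}, and gives no further argument. The one place you go beyond the paper is the abelianization clause: since $G$ is not assumed finitely generated, $H_1(G,\BZ)$ cannot be fed directly into \Cref{bVCYC-thm}~(a), and your observation that an abelian group with $b\VCYC$ is a finite union of virtually cyclic subgroups, hence virtually cyclic by B.~H.~Neumann's covering lemma, is a legitimate and clean way to close that small gap which the paper leaves implicit. No issues.
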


\section{Conjugacy Growth of Groups and BVC for Linear Groups} \label{section-conj-growth}

In this section, we establish a connection between BVC and conjugacy growth. As an application, we show that a finitely generated linear group has BVC if and only if it is virtually cyclic. The proof is based on Breuillard--Cornulier--Lubotzky--Meiri's results \cite{BCLM} on the conjugacy growth of linear groups.

\subsection{Growth of Groups and BVC} Let $G$ be a group with a finite symmetric generating set $S$. We define the word metric on $G$ as follows
$$d_S(g,h) =\min \{n \mid g^{-1}h=s_1s_2\cdots s_n,s_i\in S\}.$$
This can also be seen as the metric on the Cayley graph $\operatorname{Cay}(G,S)$ of $G$ with respect to $S$, where we assign edges unit length. For any $g\in G$, we define the word length of $g$ via 
$$|g|_S = d_S(e,g),$$
where $e$ is the identity element of $G$. Now given $n>0$, we denote by $B_n(G,S)$ the ball of radius $n$ with respect to the word metric. The word growth function is the function that maps $n>0$ to $|B_n(G,S)|$, i.e. the number of elements of distance at most $n$ from the identity.

Similarly, for an element $g \in G$, we define its \emph{length up to conjugacy} by
$$
|g|_S^c = \min \left\{ \left|hgh^{-1} \right|_S \mid h\in G \right \}.
$$
By definition, this number only depends on the conjugacy class $[g]$ of $g$.

Now given $n>0$, we can consider the ball of radius $n$ under the length up to conjugacy in the set of conjugacy classes of $G$,
$$B^c_n(G,S) = \{[g]\mid g\in G, |g|_S^c \leq n\}$$
The conjugacy growth function $g_c(n)$ assigns to $n>0$ the number $|B_n^c(G,S)|$, i.e. the number of conjugacy classes which intersect $B_n(G,S)$.  For $f,g \colon \BN \to \BN$, we write $f \preceq g$ if there is some constant $C \in \BN$ such that $f(n) \leq g(Cn)$ for all $n \in \BN$. If $f \preceq g$ and $g \preceq f$, we say that $f$ and $g$ are equivalent and write $f \sim g$. Under this equivalence relation, the conjugacy growth function is independent of the choice of generating set. We say that a group has linear (resp. at most linear) conjugacy growth if $g_c(n) \sim n$ (resp. $g_c(n) \preceq n$), and we say that a group has exponential conjugacy growth if $g_c(n) \sim 2^n$ or equivalently if
$$\liminf_{n\rightarrow \infty} \frac{\log|B^c_n(G,S)|}{n} >0.$$
For more information about conjugacy growth, we refer to \cite{GubaSapir2010} and \cite{HuOs}. To build the connection between conjugacy growth and BVC, we need the notion of distortion, see \cite[Section 2]{CaFr} for more information.

\begin{defn} \cite[III.$\Gamma$.3.13]{BH}
Let $G$ be a finitely generated subgroup, and let $S$ be a symmetric finite generating set as above. The algebraic translation number of an element $h\in G$, denoted by $||h||_S$, is the limit
$$ ||h||_S := \lim_{n\rightarrow \infty} \frac{|h^n|_S}{n}$$
An element $h\in G$ is undistorted if the translation number $||h||_S$ is positive.
\end{defn}

Note that an element $h$ of infinite order is undistorted in $G$ if and only if the infinite cyclic subgroup $\langle h \rangle$ is quasi-isometrically embedded in $G$.

\begin{rem}\label{rem-transnumber}
The length $||h||_S$ depends only on the conjugacy class of $h$ and $||h^m|| = |m| ||h||_S$ for every $m\in \BZ$ \cite[III.$\Gamma$.3.14]{BH}. The property of being undistorted is independent of the generating set $S$\cite[Remark 2.6]{CaFr}.
\end{rem}

\begin{lem} \label{bound-word-length}
Let $G$ be a group with a finite generating set $S$. Then $$|h|_S \geq ||h||_S$$ for all $h\in G$.
\end{lem}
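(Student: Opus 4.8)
The plan is to use the subadditivity (triangle inequality) of the word length together with Fekete's lemma. The essential observation is that $|\cdot|_S$ is subadditive: for any $x,y \in G$ we have $|xy|_S \leq |x|_S + |y|_S$, which is immediate from the definition, since concatenating a geodesic word for $x$ with a geodesic word for $y$ yields a word representing $xy$ of length $|x|_S + |y|_S$.

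First I would record the consequence that the sequence $a_n := |h^n|_S$ is subadditive, i.e. $a_{m+n} = |h^{m+n}|_S \leq |h^m|_S + |h^n|_S = a_m + a_n$. By Fekete's subadditive lemma, the limit defining $||h||_S$ not only exists but equals the infimum:
$$
||h||_S = \lim_{n\to\infty} \frac{|h^n|_S}{n} = \inf_{n \geq 1} \frac{|h^n|_S}{n}.
$$
(Indeed, the existence of the limit is already presupposed by the definition given in the excerpt, so one may simply invoke it; the infimum characterization is the only extra input needed and is standard.)

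Finally, I would evaluate the infimum at $n = 1$ to conclude
$$
||h||_S = \inf_{n \geq 1} \frac{|h^n|_S}{n} \leq \frac{|h^1|_S}{1} = |h|_S,
$$
which is exactly the desired inequality. Alternatively, avoiding Fekete entirely, one can prove by induction from subadditivity that $|h^n|_S \leq n\,|h|_S$ for all $n \geq 1$, so that $\frac{|h^n|_S}{n} \leq |h|_S$ for every $n$, and then pass to the limit to obtain $||h||_S \leq |h|_S$.

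There is no genuine obstacle here; the statement is elementary. The only point requiring any care is justifying that one may pass from the bound on each term $\frac{|h^n|_S}{n} \leq |h|_S$ to the same bound on the limit, which is automatic since the limit of a sequence bounded above by a constant is bounded by that constant. I would present the induction argument as the cleanest route, as it keeps the proof self-contained and does not require citing Fekete's lemma.
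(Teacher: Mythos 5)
Your proof is correct and follows essentially the same route as the paper: the paper also establishes $|h^n|_S \leq n|h|_S$ via the triangle inequality (the telescoping chain $d_S(e,h^n)\leq \sum_i d_S(h^i,h^{i+1}) = n\,d_S(e,h)$, which is just your iterated subadditivity) and then lets $n\to\infty$. The Fekete digression is unnecessary but harmless; your preferred induction argument is exactly the paper's proof.
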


\Proof For any $h\in G$ we have
$$d_S(e,h^n )\leq d_S(e,h)+d_S(h,h^2)+\cdots + d_S(h^{n-1},h^n). $$
Since $d_S(h^i,h^{i+1}) = d_S(e,h)$ for any $i$, it follows 
$$\frac{|h^n|_S }{n} = \frac{d_S(e,h^n )}{n} \leq d_S(e,h)=|h|_S.$$
Letting $n$ go to infinity, we have $||h||_S \leq |h|_S$.
\qed

\begin{lem}\label{VC--undistorted-linear}
Let $G$ be a finitely generated group and $V$ be a virtually cyclic subgroup of $G$. If $V$ contains an infinite order element that is undistorted, then the conjugacy growth of $V$ in $G$ is linear, i.e. the limit
$$\lim_{n\rightarrow \infty} \frac{|B_n^c(G,S)\cap V|}{n}$$
exists and is greater than $0$. Therefore, if $G$ has $b\VCYC$ and every infinite order element of it is undistorted, then it has at most linear conjugacy growth. 
\end{lem}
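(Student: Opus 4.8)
The plan is to split the count $\lvert B_n^c(G,S)\cap V\rvert$ into the contribution of finite order elements and of infinite order elements of $V$, show the first is bounded and the second is $\Theta(n)$, and organise everything through the translation number. Since $V$ is virtually cyclic it has finitely many conjugacy classes of finite subgroups by \Cref{BVCfinitesubgroup}, hence finitely many classes of finite order elements, so these contribute $O(1)$ to $\lvert B_n^c(G,S)\cap V\rvert$ for every $n$. Now fix an infinite cyclic subgroup $Z=\langle t\rangle$ that is normal of finite index $m$ in $V$ (any infinite virtually cyclic group contains such a subgroup). By \Cref{vircycorder} the given undistorted infinite order element of $V$ and $t$ share a common power, whence $\|t\|_S>0$ by \Cref{rem-transnumber}; so $t$ is itself undistorted.

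For the upper bound I would bound the number of infinite order \emph{elements} $v\in V$ with $\|v\|_S\le n$. Since $[V:Z]=m$ and $Z$ is normal, $v^m\in Z$, say $v^m=t^{a(v)}$, and $v$ has infinite order exactly when $a(v)\neq 0$. Passing to $V/F$, where $F$ is the maximal finite normal subgroup from \Cref{vcstru}, an infinite order $v$ maps into the canonical infinite cyclic part, and the equation $v^m=t^{a}$ determines the image of $v$ in $V/F$ uniquely; hence for each fixed $a$ there are at most $\lvert F\rvert$ solutions. From \Cref{rem-transnumber} we have $\lvert a(v)\rvert\,\|t\|_S=m\,\|v\|_S$, so $\|v\|_S\le n$ forces $\lvert a(v)\rvert\le mn/\|t\|_S$, giving at most $\lvert F\rvert\,(2mn/\|t\|_S+1)=O(n)$ such elements. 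Finally \Cref{bound-word-length} together with conjugacy invariance of the translation number yields $\|v\|_S\le\lvert v\rvert_S^c$, so every class counted in $B_n^c(G,S)\cap V$ has a representative $v\in V$ with $\|v\|_S\le n$; distinct classes use disjoint representatives, so the number of classes is $O(n)$.

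For the lower bound I would use the powers $t^j$, $j\ge 1$: the triangle inequality gives $\lvert t^j\rvert_S^c\le\lvert t^j\rvert_S\le j\lvert t\rvert_S$, so $[t^j]\in B_n^c(G,S)\cap V$ whenever $j\le n/\lvert t\rvert_S$, while the $[t^j]$ are pairwise non-conjugate in $G$ because their translation numbers $j\|t\|_S$ are distinct. This already produces at least $\lfloor n/\lvert t\rvert_S\rfloor$ classes, so the conjugacy growth of $V$ in $G$ is linear, and this two sided estimate is all that the rest of the paper needs. To obtain the genuine limit one sharpens the lower estimate: from $j\|t\|_S=\|t^j\|_S\le\lvert t^j\rvert_S^c\le\lvert t^j\rvert_S$ and the fact that $\|t\|_S=\lim_j\lvert t^j\rvert_S/j$ by definition (subadditivity of word length), a squeeze gives $\lvert t^j\rvert_S^c\sim j\|t\|_S$. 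Combining this with the bounded multiplicity $N_k$ of classes at each translation level $k\|t\|_S$ from the upper bound, $\lvert B_n^c(G,S)\cap V\rvert$ equals $\sum_{k\lesssim n/\|t\|_S}N_k$ up to lower order terms. I expect the main obstacle to sit precisely here: upgrading the two sided linear bound to a convergent limit requires the eventual regularity (periodicity) of the multiplicities $N_k$, which encodes the $G$-conjugacy fusion among the finitely many $m$-th roots of each $t^a$ and is the only step not formal from the translation number bookkeeping.

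For the concluding assertion, suppose $G$ has $b\VCYC$ with witness $H_1,\dots,H_N\in\VCYC$ in the sense of \Cref{defn-bfamily}. Then every element of $G$ is conjugate into some $H_i$, so every $G$-conjugacy class of length at most $n$ meets some $H_i$ and therefore $g_c(n)=\lvert B_n^c(G,S)\rvert\le\sum_{i=1}^N\lvert B_n^c(G,S)\cap H_i\rvert$. If $H_i$ is finite the corresponding term is bounded; if $H_i$ is infinite it contains infinite order elements, which are undistorted by hypothesis, so the first part of the lemma applies and the term is $O(n)$. Summing the finitely many contributions gives $g_c(n)\preceq n$, that is, $G$ has at most linear conjugacy growth.
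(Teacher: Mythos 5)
Your proposal is correct and follows essentially the same route as the paper's own proof: both reduce to infinite-order elements, use Lemma \ref{vircycorder} to transfer undistortedness to a fixed generator of the cyclic part, and then combine the two-sided estimate $\|v\|_S \leq |v|_S^c \leq |v|_S$ with the finite normal subgroup $F$ to bound the number of classes at each translation level by $|F|$, yielding matching linear upper and lower bounds; your use of a normal infinite cyclic $\langle t\rangle$ of finite index and the equation $v^m = t^{a(v)}$ is just a mild repackaging of the paper's passage to the quotient $V \to V/F$. The one step you honestly flag as incomplete --- upgrading the two-sided $\Theta(n)$ bound to an actual convergent limit --- is equally unaddressed in the paper's own proof, and since the rest of the paper only uses the linear bound in the $\preceq$ sense, nothing substantive is lost.
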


\Proof Note first that a virtually cyclic group has only finitely many conjugacy classes of finite subgroups. So to count the conjugacy growth of $V$ in $G$, we only need to count the elements of infinite order in $G$.  Let $v_0$ be the infinite order element in $V$ determined by \Cref{vircycorder} and let $v$ be the element that is undistorted. Then there are nonzero integers $m_0$ and $m$ such that $v_0^{m_0} = v^m$ and $\frac{m_0}{m}\in \BZ$. In particular, we have $||v_0||_S = |\frac{m}{m_0}|||v||_S>0$. In fact, this also says that any infinite order element in $V$ is undistorted using \Cref{vircycorder}.

Now by \Cref{rem-transnumber} and \Cref{bound-word-length}, we have
$$|v_0^n|_S  \geq  ||v_0^n||_S=n||v_0||_S \, .$$ 
Since the algebraic translation number is invariant under conjugation, we have  $|gv_0^ng^{-1}|_S \geq n||gv_0g^{-1}||_S = n||v_0||_S$, 
for any $g \in G$. Thus
$$ n||v_0||_S \leq |v^n_0|_S^c \leq n|v_0|_S \, .$$
This says that the conjugacy growth of the cyclic group generated by $v_0$ in $G$ is linear. As for the whole group $V$, note that $V$ satisfies the following short exact sequence by \Cref{vcstru}
$$1 \rightarrow F \rightarrow V \rightarrow C\rightarrow 1$$
where $C$ is isomorphic to $\BZ$ or the infinite dihedral group $\BZ\rtimes \BZ/2$. In the case $C \cong \BZ\rtimes \BZ/2$, the image of all the infinite order elements under the quotient map $\pi \colon V \rightarrow C$ must lie in $ \BZ\rtimes \{0\} \leq \BZ\rtimes \BZ/2$. Hence $|\pi(v)|$ makes sense in either case. And since $|\pi(v_0)|=1$, one sees that for any infinite order element $v$, we have 
$$||v||_S = |\pi(v)|||v_0||_S \, .$$
Thus given $n$, there are at most $\frac{|F|\cdot n}{||v_0||_S}$ many conjugacy classes of elements in  $ B_n^c(G,S) \cap  V $. Therefore the conjugacy growth of $V$ in $G$ is linear. \qed

There is a notion of a semihyperbolic group which generalizes hyperbolic as well as CAT(0) groups, see \cite[Section III.$\Gamma$.4]{BH} for more details. By \cite[III.$\Gamma$.4.19]{BH}, every element of infinite order in a semihyperbolic group is undistorted, hence we have

\begin{cor}
Let $G$ be a semihyperbolic group with $b \VCYC$, then it has at most linear conjugacy growth. In particular, if it is hyperbolic, then it is virtually cyclic.
\end{cor}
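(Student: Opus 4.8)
The plan is to treat the two assertions separately. For the first, I would simply combine the two ingredients already assembled: by \cite[III.$\Gamma$.4.19]{BH} every infinite order element of a semihyperbolic group is undistorted, and a semihyperbolic group is by construction finitely generated, so the hypotheses of the second assertion of \Cref{VC--undistorted-linear} are met as soon as we assume $b\VCYC$. That lemma then yields at most linear conjugacy growth with no further work. (Since hyperbolic groups and CAT(0) groups are both semihyperbolic, this also re-derives the relevant growth estimates behind \Cref{bVCYC-thm}.)

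For the ``in particular'' clause I would argue by contradiction using the elementary/non-elementary dichotomy for hyperbolic groups. Suppose $G$ is hyperbolic with $b\VCYC$ but not virtually cyclic. Then $G$ is infinite and not virtually cyclic, hence non-elementary, and a non-elementary hyperbolic group has exponential conjugacy growth. This contradicts the at most linear conjugacy growth established in the first part, so $G$ must be elementary, i.e.\ finite or virtually infinite cyclic; in either case $G$ is virtually cyclic, as claimed.

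The one genuinely delicate point is the assertion that a non-elementary hyperbolic group has exponential conjugacy growth: in the torsion-free case this is classical (Coornaert--Knieper), but handling torsion uniformly via the exact growth rate requires some care. To keep the argument self-contained and sidestep this, I would instead invoke the internal machinery of the paper: a non-elementary hyperbolic group is acylindrically hyperbolic, and \Cref{bVCYC-thm}(c) shows that such a group cannot have $b\VCYC$ (the cited proof produces infinitely many primitive conjugacy classes, no two of which lie in a common virtually cyclic subgroup). This furnishes the contradiction without any appeal to the precise growth function. Accordingly, I expect the main obstacle to be purely bookkeeping around torsion in the ``exponential conjugacy growth'' input, which the acylindrical-hyperbolicity route removes entirely.
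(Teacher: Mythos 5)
Your argument is correct. The first assertion is handled exactly as in the paper: the sentence preceding the corollary records that \cite[III.$\Gamma$.4.19]{BH} makes every infinite order element of a semihyperbolic group undistorted, and the second assertion of \Cref{VC--undistorted-linear} then gives the linear bound on conjugacy growth; no discrepancy there. For the ``in particular'' clause you diverge from the paper: the paper simply quotes \cite[Theorem 1.1]{HuOs} (see also \cite{CoornaertKnieper}) for the fact that a hyperbolic group has at most linear conjugacy growth if and only if it is virtually cyclic, so the conclusion really is deduced from the growth estimate of the first sentence. You instead bypass conjugacy growth entirely by noting that a non-elementary hyperbolic group is acylindrically hyperbolic and invoking \Cref{bVCYC-thm}(c), whose proof shows such a group cannot have $b\VCYC$. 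This is logically valid and does sidestep the torsion issue you flag --- although that issue is in fact already resolved by the reference the paper uses, since \cite[Theorem 1.1]{HuOs} covers non-elementary hyperbolic groups with torsion. The trade-off is that your route makes the ``in particular'' clause independent of the linear-growth statement it is nominally a consequence of (it becomes a restriction of \Cref{bVCYC-thm}(c) to hyperbolic groups), whereas the paper's version genuinely derives virtual cyclicity from the conjugacy growth bound, which is the point of the section. Either way the corollary stands.
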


\Proof We only need to show that a hyperbolic group has linear conjugacy growth if and only if it is virtually cyclic. This can be found for example in \cite[Theorem 1.1]{HuOs}, see also \cite{CoornaertKnieper}. \qed

\begin{rem}
We believe that a semihyperbolic group has at most linear conjugacy growth if and only if it is virtually cyclic. Unfortunately, we do not know how to show this.
\end{rem}

Later in \Cref{BVC-linear-positive}, we will see that if a group acts properly on a CAT(0) space via semi-simple isometries, then any infinite order element is also undistorted.

\subsection{Linear Groups and BVC}\label{linear-BVC}
Recall that a linear group is a subgroup of the general linear group $GL_d(\BK)$, where $\BK$ is a field. In \cite{GubaSapir2010} it was conjectured that a non-virtually solvable finitely generated linear group has exponential conjugacy growth, which was later proven in \cite{BCLM}. In order to show \ref{thmA} we rely on the following stronger result:

\begin{thm}\cite[Theorem 1.2]{BCLM}\label{chara-poly-growth}
For every $d$, there exists a constant $c(d)>0$ such that if $\BK$ is a field and $S$ is a finite symmetric subset of $GL_d(\BK)$ generating a non-virtually solvable subgroup, then 
$$ \liminf_{n\rightarrow \infty} \frac{1}{n} \log \chi_S(n) \geq c(d) , $$
where $\chi_S(n)$ is the number of elements in $\BK[X]$ appearing as characteristic polynomials of elements of $B_n(G,S)$.
\end{thm}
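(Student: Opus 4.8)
The plan is to prove the sharper statement that the number of distinct characteristic polynomials occurring in $B_n(G,S)$ grows exponentially at a rate depending only on $d$, where $G=\langle S\rangle$; since conjugate matrices have equal characteristic polynomials, $\chi_S(n)$ is at most the conjugacy growth, so the content is to exhibit exponentially many distinct spectra, uniformly in $\BK$ and $S$. To set up the dynamics I would first replace $\BK$ by a more convenient field: the entries of the elements of $S$ generate a finitely generated field, and a specialization argument of the kind used in the proof of the Tits alternative embeds $G$ into $\GL_d(\BL)$ for a local field $\BL$ carrying an absolute value $|\cdot|$, chosen so that $G$ is unbounded and contains a \emph{proximal} element, i.e.\ one acting on projective space with a unique attracting fixed point. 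In this setting the top eigenvalue of a matrix is detected by its contraction on projective space, and ping-pong estimates become available.

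This is where the uniformity must be built in. Because $G$ is not virtually solvable, Breuillard's strong (uniform) Tits alternative and his height-gap theorem provide an integer $N=N(d)$, depending only on the dimension, such that $B_N(G,S)$ already contains a pair of elements $a,b$ that are quantitatively proximal with parameters bounded in terms of $d$ alone and that play ping-pong, hence generate a free group. The decisive feature is that the proximality constants, and therefore the growth rate obtained below, depend on $d$ alone and not on $\BK$ or $S$.

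A reduced word of combinatorial length $\ell$ in $a,b$ has $S$-length at most $N\ell$, so $B_n(G,S)$ contains at least $2^{c\ell}$ conjugacy classes of ping-pong words with $\ell \asymp n/N$. It then remains to show that the characteristic-polynomial map does not collapse these, equivalently that the tuple of power traces $(\Tr w, \Tr w^2, \dots, \Tr w^d)$ — which determines $\chi_w$ by Newton's identities — takes exponentially many values. Here proximality does the work: for a long ping-pong word the trace is dominated by the top eigenvalue, so $\log|\Tr w| \approx \log|\lambda_1(w)|$, and the latter is governed by the combinatorics of the word through the ping-pong cocycle. Restricting to a suitably structured subfamily of words, one proves a separation estimate showing the char-poly map is boundedly-to-one there, so its image still has size at least $2^{c'(d)\,n}$, which is the claim.

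Exhibiting a single free subgroup and deducing exponential word growth is classical; the real difficulty lies in two uniformities. First, forcing the proximality parameters, and hence $c(d)$, to be bounded below by a constant depending only on $d$ across all fields $\BK$ (including positive characteristic) and all finite symmetric generating sets $S$ is precisely the depth of the height-gap theorem. Second, because the characteristic polynomial is a conjugacy invariant, counting group elements is insufficient: one must guarantee that exponentially many ping-pong words have genuinely distinct spectra, and not merely distinct images in the free group, which is the role of the trace-separation estimate above and the main technical obstacle.
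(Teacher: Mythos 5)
First, a point of order: the paper does not prove this statement at all. It is quoted verbatim, with attribution, as Theorem 1.2 of \cite{BCLM}, and is used purely as a black box in the proof of \Cref{linear-bVCYC}. So there is no internal proof to compare yours against; the only meaningful benchmark is the argument in \cite{BCLM} itself.

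Measured against that, your outline does track the actual strategy of Breuillard--Cornulier--Lubotzky--Meiri: reduction to a finitely generated field equipped with a suitable absolute value, use of the uniform Tits alternative and the height-gap theorem to find, inside a ball of radius $N(d)$, a ping-pong pair whose proximality parameters are controlled by $d$ alone, and then a separation argument showing that exponentially many ping-pong words already realize distinct characteristic polynomials. But as written it is a plan rather than a proof, and the gaps sit exactly at the decisive points --- the two steps you yourself flag as ``the real difficulty.'' (i) The uniformity of the proximality constants over all fields $\BK$ and all finite symmetric $S$ is invoked by name only; note moreover that the height-gap theorem is a statement over $\overline{\BQ}$, so the positive-characteristic case requires a genuinely separate treatment, which \cite{BCLM} supply and you do not. (ii) The claim that the characteristic-polynomial map is boundedly-to-one on an exponential family of ping-pong words is asserted via an unspecified ``separation estimate'': controlling $\log|\Tr w|$ by the top eigenvalue pins down a single real number per word and does not by itself distinguish $2^{cn}$ polynomials, so the actual combinatorial/metric separation of spectra --- the heart of the theorem --- is missing. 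In short, you have correctly reconstructed the architecture of the proof in \cite{BCLM}, but the argument as submitted does not establish the theorem.
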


In general, an element of infinite order in a linear group may be distorted, thus we cannot apply \Cref{VC--undistorted-linear} directly.

\begin{defn}\label{absolute-value}
Let $\BK$ be a field. An absolute value on $\BK$ is a function 
$$|\cdot |:\BK\rightarrow \BR_+ =\{r\geq 0\mid r\in \BR\}$$
that satisfies the following conditions:
\begin{enumerate}
    \item $|x| = 0$ if and only if $x=0$.
    \item $|xy| = |x||y|$ for all $x,y\in \BK$.
    \item $|x+y| \leq |x|+|y| $ for all $x,y\in \BK$.
\end{enumerate}
\end{defn}

We learned the following two lemmas from Yves de Cornulier's answer on Mathoverflow\footnote{see http://mathoverflow.net/questions/143305/distortion-of-cyclic-subgroups-of-linear-groups}

\begin{lem}\label{cyclic-distortion-zero}
Let $G$ be finitely generated subgroup of $\GL_d(\BK)$ where $\BK$ is some field. Let $g \in G$ be an element such that at least one of its eigenvalues is not a root of unity. Then $g$ is undistorted. 
\end{lem}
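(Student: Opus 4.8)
The plan is to manufacture an absolute value on a field enlarging $\BK$ that ``sees'' the eigenvalue $\lambda$, and then to bound the word length $|g^n|_S$ from below by the operator norm of $g^n$ over the associated complete field. First I would reduce to a finitely generated field: since $S$ is finite, all entries of its elements lie in a finitely generated subfield $\BK_0 \subseteq \BK$, and adjoining $\lambda$ (and if convenient the remaining eigenvalues of $g$) produces a finitely generated field $\BL \supseteq \BK_0$ with $g, S \subseteq \GL_d(\BL)$ and $\lambda \in \BL$.

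The crux is to exhibit an absolute value $\abs{\cdot}$ on $\BL$, in the sense of \Cref{absolute-value}, with $\abs{\lambda} \neq 1$, and this is the step I expect to be the main obstacle. I would split according to whether $\lambda$ is algebraic or transcendental over the prime field $k$. In characteristic $p$, an element algebraic over $\BF_p$ lies in a finite field and is therefore a root of unity; so whenever $\lambda$ is not a root of unity in positive characteristic it is transcendental over $\BF_p$. When $\lambda$ is transcendental over $k$, the rational function field $k(\lambda)$ carries the valuation ``at infinity'' with $v_\infty(\lambda) = -1$, giving an absolute value with $\abs{\lambda} > 1$, which then extends to the finitely generated field $\BL$. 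When $\lambda$ is algebraic over $\BQ$, it is a nonzero algebraic number that is not a root of unity; by Kronecker's theorem, together with the fact that an algebraic number which is a unit at every finite place and has all archimedean absolute values equal to $1$ must be a root of unity, there is a place $v$ of $\BQ(\lambda)$ with $\abs{\lambda}_v \neq 1$, and this again extends to $\BL$.

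With the absolute value in hand, let $\widehat{\BL}$ be the completion of $\BL$, so that $g, S \subseteq \GL_d(\widehat{\BL})$. I would equip $\widehat{\BL}^d$ with the sup-norm (which behaves well in both the archimedean and non-archimedean settings) and let $\norm{M}_{\mathrm{op}}$ be the associated operator norm, which is submultiplicative: $\norm{MN}_{\mathrm{op}} \leq \norm{M}_{\mathrm{op}} \norm{N}_{\mathrm{op}}$. Setting $C = \max\{1, \max_{s \in S} \norm{s}_{\mathrm{op}}\}$, any expression of an element as a word of length $\ell$ in $S$ gives $\norm{g^n}_{\mathrm{op}} \leq C^{|g^n|_S}$, whence
$$ |g^n|_S \geq \frac{\log \norm{g^n}_{\mathrm{op}}}{\log C}. $$

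Finally I would lower-bound $\norm{g^n}_{\mathrm{op}}$ using the spectral radius formula, valid over complete valued fields: $\lim_{n \to \infty} \norm{g^n}_{\mathrm{op}}^{1/n} = \max_i \abs{\lambda_i}$, the maximum over the eigenvalues of $g$ in $\overline{\widehat{\BL}}$, to which $\abs{\cdot}$ extends uniquely. Since $\abs{\lambda} \neq 1$, either $\max_i \abs{\lambda_i} > 1$ or $\min_i \abs{\lambda_i} < 1$; replacing $g$ by $g^{-1}$ if necessary—which is harmless because $S$ is symmetric, so $|g^{-n}|_S = |g^n|_S$ and, by \Cref{rem-transnumber}, $g$ and $g^{-1}$ have equal translation numbers—I may assume the spectral radius of $g$ exceeds $1$. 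Then $\norm{g^n}_{\mathrm{op}}$ grows exponentially in $n$, so
$$ \liminf_{n \to \infty} \frac{|g^n|_S}{n} \geq \frac{\log \max_i \abs{\lambda_i}}{\log C} > 0, $$
which shows that the translation number of $g$ is positive and hence that $g$ is undistorted.
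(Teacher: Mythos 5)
Your proof is correct and follows essentially the same route as the paper: produce an absolute value on an extension of $\BK$ with $|\lambda|>1$ (after possibly replacing $g$ by $g^{-1}$), then bound $|g^n|_S$ from below by comparing word length against a submultiplicative matrix norm. The only differences are cosmetic: the paper cites Tits' Lemma 4.1 for the existence of the absolute value where you re-derive it via the transcendental/Kronecker dichotomy, and it avoids passing to the completion and invoking the spectral radius formula by using the elementary inequality $|\mu|\le\|A\|$ for an eigenvalue $\mu$ of $A$, applied to $g^n$ whose eigenvalue $\lambda^n$ already forces exponential growth of $\|g^n\|$.
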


\Proof  First of all we can assume that $\BK$ is a finitely generated field. Let $\lambda$ be an eigenvalue of $g$ which is not a root of unity.  By \cite[Lemma 4.1]{Tits}, up to replacing $g$ by its inverse, there exists an absolute value $|\cdot|$ on a field extension $\BK'$ of $\BK$ such that $|\lambda| >1$. Let $||\cdot||$ be a submultiplicative matrix norm on the vector space of $n\times n$ matrices over $\BK'$. For example, we can take 
$$ ||A|| = \max_i \Sigma_{j=1}^n |a_{ij}| $$
where $A=(a_{ij})$. Then for $A, B$ two $n \times n$ matrices over $\BK'$ we have $||AB|| \leq ||A|| \cdot ||B||$ and $|\mu| \leq ||A||$ for any eigenvalue $\mu$ of $A$.

Now, let $s_1,\dots,s_m$ be the elements of some finite symmetric generating set $S$ for $G$ and let $s = \max_{1\leq i\leq m}{||s_i||}$. If $g$ can be written as a word of length $l$ in the generators $S$, then $||g|| \leq s^l$. Hence $1<|\lambda| \leq s^l$, so $l \geq \log_s|\lambda|$.  Since $g^k$ has eigenvalue $\lambda^k$, we have 
$$ |g^k|_S \geq \log_s|\lambda|^k,$$
thus
$$||g||_S = \lim_{k\rightarrow \infty}\frac{|g^k|_S}{k} \geq \log_s|\lambda|.$$
Therefore $g$ is  undistorted.  
\qed

\begin{lem}\label{cyclic-distortion-positive}
Let $G$ be finitely generated subgroup of $\GL_d(\BK)$ where $\BK$ is a field of positive characteristic. Then any element of infinite order in $G$ is undistorted. 
\end{lem}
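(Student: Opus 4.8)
The plan is to prove Lemma~\ref{cyclic-distortion-positive}, which asserts that in a finitely generated subgroup $G \leq \GL_d(\BK)$ with $\BK$ of positive characteristic, every infinite order element is undistorted.

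My strategy is to reduce to Lemma~\ref{cyclic-distortion-zero} by showing that an infinite order element in positive characteristic must have an eigenvalue that is not a root of unity. First I would note that, as in the previous proof, we may assume $\BK$ is finitely generated over its prime field $\BF_p$; hence $\BK$ has finite transcendence degree. Let $g \in G$ have infinite order. The key observation is a dichotomy on the eigenvalues of $g$: if \emph{all} eigenvalues of $g$ were roots of unity, then I claim $g$ would have finite order, contradicting our hypothesis. Once this claim is established, $g$ has an eigenvalue that is not a root of unity, and Lemma~\ref{cyclic-distortion-zero} immediately gives that $g$ is undistorted.

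The core of the argument is therefore the claim that an element of $\GL_d(\BK)$ all of whose eigenvalues are roots of unity has finite order when $\operatorname{char}\BK = p > 0$. Here I would argue as follows. The eigenvalues $\lambda_i$, being roots of unity lying in some finite extension of $\BK$, are roots of unity of bounded order; since a root of unity in characteristic $p$ has order prime to $p$ and lies in a field generated by finitely many of them over $\BF_p$, the $\lambda_i$ all lie in a finite field $\BF_q$, so each satisfies $\lambda_i^{q-1} = 1$ for a common $q$. Replacing $g$ by $g^{q-1}$, I reduce to the case where every eigenvalue equals $1$, i.e. $g^{q-1}$ is unipotent. The point special to positive characteristic is that a unipotent matrix is killed by a $p$-power: if $u = 1 + N$ with $N$ nilpotent, $N^d = 0$, then $u^{p^k} = 1 + N^{p^k} = 1$ as soon as $p^k \geq d$, using the Frobenius-style expansion $(1+N)^{p^k} = 1 + N^{p^k}$ valid in characteristic $p$. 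Hence $g^{(q-1)p^k} = 1$, so $g$ has finite order.

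The main obstacle, and the step requiring the most care, is the reduction showing that the roots of unity occurring as eigenvalues all lie in a \emph{single} finite field. One must rule out the possibility of roots of unity of unbounded order appearing; this is exactly where positive characteristic is essential, and it follows because the eigenvalues lie in a fixed finite extension $\BK'$ of the finitely generated field $\BK$, and the roots of unity in any field finitely generated over $\BF_p$ form a finite group (the multiplicative torsion is finite in such fields). I would make this precise by noting that the roots of unity in $\BK'$ are algebraic over $\BF_p$ and hence lie in $\overline{\BF_p} \cap \BK'$, which is a finite field since $\BK'$ is finitely generated over $\BF_p$. With that finiteness in hand, the common exponent $q-1$ exists and the unipotent argument closes the proof.
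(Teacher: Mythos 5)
Your proposal is correct and follows essentially the same route as the paper: reduce via Lemma~\ref{cyclic-distortion-zero} to the case where all eigenvalues are roots of unity, pass to a power that is unipotent, and use $(1+N)^{p^k} = 1 + N^{p^k}$ to conclude finite order. Your extra care in justifying the common exponent is fine, though it can be shortened: each root-of-unity eigenvalue is already algebraic over $\BF_p$ (so generates a finite field), and the compositum of the $d$ resulting finite fields is finite, with no appeal to the finite generation of $\BK$ needed.
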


\Proof Let $g \in G$, by \Cref{cyclic-distortion-zero} we can assume that all eigenvalues of $g$ are roots of unity. But then $g$ must have finite order. In fact, some power $h$ of $g$ will only have eigenvalues equal to one, i.e. $h$ is unipotent in $\GL_d(\BK)$. So $(h-I)^m = 0$ for some $m$. Choose $n$ with $p^n \geq m$, where $p$ is the characteristic of $\BK$. Then
$$
0 = (h-I)^{p^n} = h^{p^n}  - I,
$$
thus $h$ has finite order, so $g$ has finite order. 
\qed

\begin{thm}\label{linear-bVCYC}
Let $G \leq GL_d(\BK)$ be a finitely generated group where $\BK$ is a field. If $G$ has BVC or more generally just $b\VCYC$, then $G$ is virtually cyclic.
\end{thm}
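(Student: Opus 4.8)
The plan is to dichotomize on whether $G$ is virtually solvable. Since BVC implies $b\VCYC$, I assume throughout that $G$ has $b\VCYC$ and fix a witness $\{V_1,\dots,V_k\}$ consisting of virtually cyclic subgroups, so that every infinite order element of $G$ is conjugate into some $V_i$. After replacing $\BK$ by the subfield generated by the matrix entries of a finite symmetric generating set $S$ of $G$, I may assume $\BK$ is finitely generated. If $G$ is virtually solvable, then \Cref{bVCYC-thm} (the virtually solvable case) immediately yields that $G$ is virtually cyclic, so all the real work lies in deriving a contradiction when $G$ is not virtually solvable.

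So suppose $G$ is not virtually solvable. Then \Cref{chara-poly-growth} gives $\liminf_{n\to\infty}\frac1n\log\chi_S(n)\ge c(d)>0$, i.e. the number of characteristic polynomials occurring among elements of $B_n(G,S)$ grows exponentially. The key point is that the characteristic polynomial is a conjugacy invariant, so I would bound $\chi_S(n)$ by counting conjugacy classes, separating the elements of $B_n(G,S)$ according to whether or not all of their eigenvalues are roots of unity. For the first type I claim that only finitely many characteristic polynomials arise, with no dependence on $n$: because $\BK$ is finitely generated, the relative algebraic closure of the prime field inside $\BK$ is a number field (characteristic $0$) or a finite field (characteristic $p$), and comparing $[\BK(\zeta):\BK]\le d$ with the degree of a primitive $m$-th root of unity $\zeta$ over the prime field bounds the possible orders $m$ in terms of $d$ and this field (using additionally, in characteristic $p$, that $1$ is the only $p$-power root of unity). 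Hence only finitely many roots of unity occur as eigenvalues of elements of $G$, so only finitely many characteristic polynomials have all roots in this finite set; call their number $C_0$.

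For the second type, every $g\in B_n(G,S)$ possessing an eigenvalue that is not a root of unity is undistorted of infinite order by \Cref{cyclic-distortion-zero}. By $b\VCYC$ such a $g$ is conjugate into some $V_i$, which therefore contains an undistorted infinite order element, so \Cref{VC--undistorted-linear} shows that the conjugacy growth of $V_i$ in $G$ is at most linear. As each such $g$ has $|g|_S^c\le|g|_S\le n$ and is conjugate into some $V_i$, the number of conjugacy classes of this type meeting $B_n(G,S)$ is at most $\sum_{i=1}^k|B_n^c(G,S)\cap V_i|$, which is $O(n)$; since distinct characteristic polynomials force distinct conjugacy classes, the characteristic polynomials of the second type also number $O(n)$. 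Combining the two types gives $\chi_S(n)\le C_0+O(n)$, so $\chi_S$ grows at most linearly, contradicting the exponential lower bound. Therefore $G$ must be virtually solvable, and hence virtually cyclic.

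I expect the main obstacle to be the characteristic-zero phenomenon that an infinite order element can be distorted (a unipotent matrix is the basic example), so that \Cref{VC--undistorted-linear} cannot be applied to all of $B_n(G,S)$ at once. Splitting by whether the eigenvalues are roots of unity is precisely what circumvents this: the boundedness of the root-of-unity eigenvalues confines the potentially distorted elements to finitely many characteristic polynomials, while \Cref{cyclic-distortion-zero} guarantees that the remaining elements — exactly those carrying the exponentially many characteristic polynomials — are undistorted and hence subject to the linear conjugacy-growth bound.
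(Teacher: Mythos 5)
Your proof is correct and rests on the same pillars as the paper's: reduce to the non-virtually-solvable case via \Cref{bVCYC-thm}, invoke the exponential characteristic-polynomial growth of \Cref{chara-poly-growth}, and contradict it by a linear upper bound coming from \Cref{cyclic-distortion-zero} and \Cref{VC--undistorted-linear}. Where you genuinely diverge is in how the linear upper bound is organized. The paper, in characteristic zero, first applies Selberg's Lemma and \Cref{finiteindexsubgroupbvc} to pass to a torsion-free finite-index subgroup, so that the $b\VCYC$ witnesses become infinite cyclic, and then splits the \emph{generators of the witnesses} according to whether they have a non-root-of-unity eigenvalue; in positive characteristic it argues separately via \Cref{cyclic-distortion-positive}. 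You instead split the \emph{elements of the ball} $B_n(G,S)$ by the same eigenvalue dichotomy and dispose of the all-roots-of-unity part by a field-theoretic bound on the orders of roots of unity of degree at most $d$ over a finitely generated field (the relative algebraic closure of the prime field being a number field or finite field). This buys you two things: you never need Selberg's Lemma or the passage to a torsion-free subgroup, and both characteristics are handled by one uniform argument (in characteristic $p$ your first class consists of finite-order elements, so you do not even need \Cref{cyclic-distortion-positive} explicitly). The only point to tighten is the bound $\sum_{i=1}^k \lvert B_n^c(G,S)\cap V_i\rvert = O(n)$: \Cref{VC--undistorted-linear} only applies to those $V_i$ containing an undistorted infinite-order element, so the sum should be restricted to the witnesses that actually receive an element of your second type --- which is harmless, since undistortedness is a conjugacy invariant and every second-type element lands in such a $V_i$.
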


\Proof Since a virtually solvable group has $b\VCYC$ if and only if it is virtually cyclic by \Cref{bVCYC-thm}, we can assume that $G$ is not virtually solvable. Thus \Cref{chara-poly-growth} applies and it follows that the characteristic polynomial growth of $G$ is exponential.

If $\BK$ has characteristic zero, then by Selberg's Lemma, $G$ is virtually torsion-free. By \Cref{finiteindexsubgroupbvc}, up to replacing $G$ by a finite index subgroup, we can assume that $G$ is torsion-free. Let $g_1,g_2,\ldots,g_m, g_1', g_2', \dots, g_k'$ be generators of the infinite cyclic witnesses for $b\VCYC$, where the $g_i$ have at least one eigenvalue which is not a root of unity and all eigenvalues of the $g_j'$ are roots of unity. Note that there are only finitely many characteristic polynomials corresponding to the elements in the cyclic subgroup generated by the elements $g_j'$. Thus these elements essentially do not contribute to the characteristic polynomial growth function $\chi_{\Sigma}(n)$. By \Cref{cyclic-distortion-zero}, the elements $g_i$ are undistorted. Thus by the proof of \Cref{VC--undistorted-linear} the number of characteristic polynomials with word length $\leq n$ in $G$ is linear in $n$, contradicting \Cref{chara-poly-growth}. 

If $\BK$ has positive characteristic, by \Cref{cyclic-distortion-positive}, it follows that any infinite order element in $G$ is undistorted. Then by \Cref{VC--undistorted-linear}, $G$ has at most linear conjugacy growth, which also contradicts \Cref{chara-poly-growth}. 
\qed

Since the property $b\VCYC$ passes to quotients by \Cref{bfquotient}, we can immediately conclude that representations of finitely generated groups having BVC are rather trivial:

\begin{cor}\label{linear-rep-BVC}
Let $\varphi \colon G \rightarrow L$ be a surjective homomorphism where $G$ is a finitely generated group with BVC and $L$ is linear. Then $L$ is virtually cyclic. 
\end{cor}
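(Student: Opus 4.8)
The plan is to exploit the single feature that distinguishes the weaker notion $b\VCYC$ from BVC: although BVC need not descend along an epimorphism, the property $b\VCYC$ does, and crucially \Cref{linear-bVCYC} was established already under the hypothesis $b\VCYC$ rather than the stronger BVC. This is exactly the situation the weaker notion was designed to handle, so the corollary should follow formally with no new analytic input.

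First I would record that BVC implies $b\VCYC$, so that the source group $G$ has $b\VCYC$. Next I would note that the family $\VCYC$ of virtually cyclic subgroups is closed under passing to quotients: the homomorphic image of a virtually cyclic group is virtually cyclic, since one may push forward a finite-index cyclic (or trivial) subgroup. Hence \Cref{bfquotient} applies to the epimorphism $\varphi$ and produces a witness showing that $L = \varphi(G)$ has $b\VCYC$.

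Finally, since $L$ is the image of the finitely generated group $G$ it is itself finitely generated, and it is linear by hypothesis. Therefore \Cref{linear-bVCYC} applies to $L$ directly and forces $L$ to be virtually cyclic, which is the desired conclusion.

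The one point that genuinely requires care — and the reason this is \emph{not} an immediate consequence of \ref{thmA} — is that one must resist the temptation to transport BVC itself along $\varphi$. When $G$ fails to be virtually torsion-free, BVC can break under the quotient map, so applying \ref{thmA} to $L$ would require first knowing that $L$ has BVC, which is not available. The whole force of the argument is that $b\VCYC$ is flexible enough to survive quotients while \Cref{linear-bVCYC} is strong enough to conclude virtual cyclicity from $b\VCYC$ alone; once this interplay is observed, there is no further obstacle.
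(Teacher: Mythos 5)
Your argument is correct and is exactly the paper's own proof: the paper deduces the corollary from the fact that $b\VCYC$ passes to quotients (\Cref{bfquotient}) combined with \Cref{linear-bVCYC}, which was stated for $b\VCYC$ rather than BVC precisely for this purpose. Your additional remarks on why one cannot simply push BVC forward also match the discussion in the introduction.
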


\subsection{Groups Acting on CAT(0) Spaces and BVC}\label{BVC-linear-positive}

In the following we provide an alternative proof of \ref{thmA} for finitely generated linear groups over a field of positive characteristic using metric spaces of non-positive curvature.

\begin{defn}
Let $G$ be a group acting by isometries on a metric space $X$. The action is called \emph{proper} if for each $x \in X$ there exists a number $r>0$ such that the set $\{g\in G \mid B(x,r) \cap gB(x,r)\}$ is finite, where $B(x,r)$ is the ball of radius $r$ centered at $x$.
\end{defn}

\begin{defns}\cite[\nopp II.6.1]{BH} Let $X$ be a metric space and let $g$ be an isometry of $X$. The \emph{displacement function} of $g$ is the function $d_g \colon X \rightarrow \BR_+ =\{r\geq 0 \mid r\in \BR\}$ defined by $d_g(x) =d(g x,x)$. The \emph{translation length} of $g$ is the number $|g|:= \inf \{d_g (x) \mid x\in X\}$. The set of points where $d_g$ attains this infimum will be denoted by $Min(g)$. More generally, if $G$ is a group acting by isometries on $X$, then $Min(G):=\bigcap_{g\in G} Min(g)$. An isometry~$g$ is called \emph{semi-simple} if $Min(g)$ is non-empty. An action of a group by isometries of $X$ is called \emph{semi-simple} if all of its elements are semi-simple.
\end{defns}

It is clear that the translation length is invariant under conjugation, i.e. $|h g h^{-1}| = |g|$ for any $g,h \in G$. Moreover, if $G$ acts properly on a CAT(0) space via semi-simple isometries, we have that $|g^n| = |n| \cdot |g|$ for any $n \in \BZ$, e.g. by the Flat Torus Theorem \cite[\nopp II.7.1]{BH}. It turns out that the translation length can also be defined by the following limit for $g$ a semi-simple isometry
$$
|g| = \lim_{n \to \infty} \frac{d(x, g^n x)}{n},
$$
where $x$ is an arbitrary point of the CAT(0) space $X$ \cite[\nopp II.6.6]{BH}. The following example shows that this is not true if $X$ is not CAT(0).

\begin{ex}
 Take  $S^1$ to be the standard circle of radius $1$. Let the cyclic group $\langle t \mid t^n=1\rangle$ act on $S^1$ via rotation by the angle $\frac{2\pi}{n}$ for $n \geq 2$. The action is semi-simple and $|t| = \frac{2\pi}{n}$. But $\lim_{n \to \infty} \frac{1}{n} d(x, t^n x) =0$ for any $x \in S^1$.
\end{ex}

\begin{prop}\label{linear-act-building}
Let $G \leq GL_d(\BK)$ be a finitely generated subgroup where $\BK$ is a field of positive characteristic. 
Suppose that there is a bound on the orders of finite subgroups in $G$. Then $G$ acts on a CAT(0) space properly via semi-simple isometries.
\end{prop}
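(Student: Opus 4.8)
The plan is to realize the required CAT(0) space as a finite product of (extended) Bruhat--Tits buildings attached to $G$ through finitely many discrete valuations of its field of definition. First I would reduce to the case where $\BK$ is finitely generated over $\BF_p$: replacing $\BK$ by the subfield generated by the entries of a finite symmetric generating set and their inverses leaves $G$ unchanged, and the resulting field is a finitely generated extension of $\BF_p$, say of transcendence degree $r$. Writing $R \subseteq \BK$ for the finitely generated $\BF_p$-subalgebra generated by these entries, we have $G \leq \GL_d(R)$ with $\Frac(R) = \BK$. Note that for any discrete valuation $v$ with $R \subseteq \CO_v$ one automatically has $G \leq \GL_d(\CO_v)$, since $\det g \in R^{\times} \subseteq \CO_v^{\times}$.

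For a discrete valuation $v$ of $\BK$ with completion $\BK_v$, I would use the extended building $\widetilde{X}_v = Y_v \times \BR$, where $Y_v$ is the affine (hence CAT(0)) building of $\SL_d(\BK_v)$ and the $\BR$-factor records $\tfrac1d\,v(\det)$, so that $\GL_d(\CO_v)$ is exactly the stabilizer of a base vertex and scalars act by translation on $\BR$. Each $\widetilde{X}_v$ is complete CAT(0), and every element of $\GL_d(\BK_v)$ acts on it as a \emph{semi-simple} isometry: on $Y_v$ this is the standard fact that group elements of a Euclidean building are elliptic or hyperbolic, and on $\BR$ it is a translation. For a finite set $V$ of valuations I would take $X = \prod_{v \in V} \widetilde{X}_v$, which is again CAT(0), and let $G$ act diagonally via $G \hookrightarrow \prod_{v\in V}\GL_d(\BK_v)$. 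This action is by semi-simple isometries: since $d_g(x)^2 = \sum_{v} d_{g_v}(x_v)^2$, the infimum of $d_g$ is attained exactly on $\prod_v \Min(g_v)$, so $\Min\big((g_v)_v\big) = \prod_v \Min(g_v)$ is non-empty whenever each factor is.

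The heart of the matter is to choose $V$ finite so that the action is \emph{proper}. Fixing a base point $x_0$, the stabilizer, and more generally $\{g : d(gx_0,x_0)\le \rho\}$, consists of $g$ whose elementary-divisor valuations at each $v \in V$ are bounded; in particular the subgroup $G_0$ of elements bounded at every $v \in V$ lies in $G \cap \prod_{v} \GL_d(\CO_v)$. The key algebraic input is that, for a suitable $V$, being bounded at all $v \in V$ forces every eigenvalue of $g$ (in $\overline{\BK}$) to be a unit at \emph{all} discrete valuations, hence --- since in a finitely generated field of characteristic $p$ the everywhere-units are precisely the elements of the finite constant field --- to be a root of unity. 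In characteristic $p$ an element all of whose eigenvalues are roots of unity is quasi-unipotent and therefore has finite order (a power is unipotent, and a unipotent $u$ satisfies $u^{p^n}=1$). Thus $G_0$ is a torsion linear group; it is locally finite (a finitely generated torsion linear group is finite, by Burnside--Schur), and a locally finite group whose finite subgroups have bounded order is itself finite. This is exactly where the hypothesis that the orders of finite subgroups of $G$ are bounded enters, and it yields that $G_0$, hence every point stabilizer, is finite.

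The step I expect to be the main obstacle is making the previous paragraph simultaneously \emph{finite} and genuinely \emph{metric}. Two issues need care. First, finitely many valuations must suffice to force root-of-unity eigenvalues: since $G$ is finitely generated, the entries of its generators are regular away from a fixed boundary divisor of a projective normal model $\overline{X}$ of $\BK$ over $\BF_p$, and one is led to take $V$ to consist of the finitely many divisorial valuations along the components of that boundary, together with enough valuations to separate the finitely many eigenvalues involved; the difficulty is that the residue field at such a valuation is the function field of a positive-dimensional variety, hence infinite, so $\widetilde{X}_v$ is not locally finite and the orbit map need not obviously be proper. The clean route around both issues is an induction on the transcendence degree $r$: the case $r=0$ is trivial (then $\BK$ is finite and $G$ is finite), and in the inductive step reduction modulo a valuation $v$ sends the vertex stabilizer $\GL_d(\CO_v)$ onto $\GL_d(\kappa(v))$, a linear group over a finitely generated field of transcendence degree $r-1$, to which the inductive hypothesis applies, the kernels of reduction being torsion (quasi-unipotent) and hence finite by the bounded-torsion hypothesis. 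Granting this organization, $G$ acts properly and by semi-simple isometries on the CAT(0) space $X$, as required (the relation $|g^n| = |n|\cdot|g|$ being furnished by the Flat Torus Theorem \cite{BH}).
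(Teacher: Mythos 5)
Your overall strategy is the same as the paper's: the paper simply cites Degrijse--Petrosyan \cite[Section 5.4]{DP15}, who construct exactly the action you describe --- a diagonal action on a finite product of (extended) Bruhat--Tits buildings attached to finitely many discrete valuations of a finitely generated field of definition --- and then uses the bounded-torsion hypothesis to cut the stabilizers down to finite groups, invoking \cite[2.2(2)]{DP15} to pass from ``discrete with finite stabilizers'' to ``proper''. So the architecture of your argument is correct and matches the source.

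However, as written the proposal has a genuine gap, and you flag it yourself: the properness of the action is never actually established. You prove (modulo the points below) that the stabilizer $G_0$ of the base vertex is finite, but properness in the sense used in the paper requires that $\{g \in G : d(gx_0,x_0)\le \rho\}$ be finite for some (in fact every) $\rho$, and finiteness of point stabilizers alone does not give this on a non--locally-finite building; the sentence ``Granting this organization, $G$ acts properly\dots'' concedes the step rather than proving it, and the proposed induction on transcendence degree is only sketched. The clean way to close it is the one implicit in your own setup: an element $g$ with $d(gx_0,x_0)\le\rho$ has all elementary divisors bounded at each $v\in V$, so the entries of $g$ are global sections of $\mathcal{O}_{\overline{X}}(ND)$ on a normal projective model $\overline{X}$ with boundary divisor $D$ --- a finite-dimensional vector space over the finite constant field --- whence only finitely many such $g$. (Alternatively, quote \cite[2.2(2)]{DP15} as the paper does.) A second, smaller issue: your route to the finiteness of $G_0$ via eigenvalues is both under-justified and unnecessary. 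Boundedness at the finitely many $v\in V$ does not by itself control eigenvalues ``at all discrete valuations''; what does the work is that $g, g^{-1}\in\GL_d(R)$ are already regular away from $D$, so integrality along the components of $D$ makes the \emph{entries} of $g$ global regular functions on $\overline{X}$, hence constants in a finite field --- no passage to $\overline{\BK}$, no ``valuations separating the eigenvalues'', and no appeal to quasi-unipotence is needed for this step. Your use of the bounded-torsion hypothesis (torsion linear groups are locally finite by Burnside--Schur, and locally finite plus bounded exponent of finite subgroups gives finite) is correct but becomes superfluous for $G_0$ once the entries are known to be constants; where the hypothesis is genuinely needed is precisely in the properness argument of \cite{DP15}, to make the a priori only locally finite stabilizers finite.
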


\Proof This is essentially due to Degrijse and Petrosyan \cite[Section 5.4]{DP15}. Note that they obtained an action of $G$ on a CAT(0) space $X$, where $X$ is a product of Euclidean buildings. Moreover, $X$ is separable and $G$ acts discretely on $X$ with countable locally finite stabilizers via semi-simple isometries \cite[II.6.6 (2)]{BH}. By our assumption on the orders of finite subgroups on $G$, it follows that all point stabilizers are finite. Hence (by for example \cite[2.2(2)]{DP15}) the action is proper.
\qed

\begin{lem} \cite[\nopp I.8.18]{BH} \label{boundwordlength}
Let $(X,d)$ be a metric space. Let $G$ be a group with a finite generating
set $S$ and associated word metric $d_S$. If $G$ acts by isometries on $X$, then for any choice of basepoint $x_0 \in X$ there exists a constant $C> 0$ such that 
$$d_X(gx_0,hx_0) \leq C d_S(g,h)$$
for all $g,h \in G$.
\end{lem}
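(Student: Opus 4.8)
The plan is to exploit the two structural features of the setup: the generating set $S$ is \emph{finite}, so a maximum of finitely many displacements exists, and $G$ acts by \emph{isometries}, so any displacement can be translated back to the basepoint $x_0$ without changing its length. Concretely, I would set
$$
C = \max_{s \in S} d_X(x_0, s x_0),
$$
which is a finite positive number precisely because $S$ is finite. This constant will turn out to bound the distance that a single generator can move the basepoint, and the telescoping argument below will accumulate these single-generator bounds along a geodesic word.

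First I would reduce the two-point estimate to a one-point estimate at the basepoint. Since $g$ acts by isometry, we have
$$
d_X(g x_0, h x_0) = d_X(x_0, g^{-1} h\, x_0),
$$
so it suffices to bound $d_X(x_0, w x_0)$ in terms of the word length $|w|_S = d_S(g,h)$, where $w = g^{-1}h$. Writing $w = s_1 s_2 \cdots s_n$ as a geodesic word with $n = d_S(g,h)$ and each $s_i \in S$, I would apply the triangle inequality in the telescoping form
$$
d_X(x_0, s_1 \cdots s_n x_0) \leq \sum_{i=1}^{n} d_X(s_1 \cdots s_{i-1} x_0,\ s_1 \cdots s_i x_0).
$$

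The key step is then to observe that each summand equals a single-generator displacement: because the isometry $s_1 \cdots s_{i-1}$ preserves distances,
$$
d_X(s_1 \cdots s_{i-1} x_0,\ s_1 \cdots s_i x_0) = d_X(x_0, s_i x_0) \leq C.
$$
Summing over the $n$ terms yields $d_X(x_0, w x_0) \leq nC = C\, d_S(g,h)$, which is exactly the claimed bound. There is no real obstacle here; the only points requiring care are the finiteness of $S$ (ensuring $C < \infty$), the choice of a geodesic word realizing $d_S(g,h)$ so that the number of factors is exactly $d_S(g,h)$, and the repeated use of the isometry hypothesis to strip off the leftmost factors. I would remark that this is a standard ``orbit map is Lipschitz'' computation, so I would keep the exposition terse.
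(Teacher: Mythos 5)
Your proof is correct and is exactly the standard telescoping argument from \cite[\nopp I.8.18]{BH}, which the paper simply cites rather than reproves. The only cosmetic point is that your $C=\max_{s\in S}d_X(x_0,sx_0)$ could vanish if every generator fixes $x_0$; replacing it by $\max\{1,\max_{s\in S}d_X(x_0,sx_0)\}$ secures the stated $C>0$ without changing anything else.
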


\begin{lem}\label{proper-CAT(0)-undist}
Let $G$ be a finitely generated group acting on a CAT(0) space $X$ properly via semi-simple isometries. Then every infinite order element in $G$ is undistorted.
\end{lem}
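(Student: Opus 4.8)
The plan is to compare the word length of the powers of $g$ with the translation length $|g|$ of $g$ acting on $X$, using the Milnor--\v{S}varc-type bound of \Cref{boundwordlength} together with the basic properties of translation length recalled above. First I would show that for an infinite order element $g$ the translation length $|g|$ is strictly positive. Since the action is semi-simple, $Min(g)$ is non-empty, so the displacement function $d_g$ attains its infimum; if $|g| = 0$ then some point $x \in X$ satisfies $d(gx,x) = 0$, i.e.\ $g$ fixes $x$. As the action is proper, the stabilizer of $x$ is finite, which contradicts $g$ having infinite order. Hence $|g| > 0$.

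Next I would set up a two-sided comparison with a fixed basepoint $x_0 \in X$. On one side, using that $|g^n| = n|g|$ for a proper semi-simple action on a CAT(0) space and that $|g^n|$ is an infimum of displacements, we get
$$ d_X(g^n x_0, x_0) \geq |g^n| = n|g| $$
for every $n \geq 1$. On the other side, \Cref{boundwordlength} applied to the pair $(g^n, e)$ furnishes a constant $C > 0$ with
$$ d_X(g^n x_0, x_0) \leq C\, d_S(g^n, e) = C\, |g^n|_S. $$

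Combining the two inequalities gives $|g^n|_S \geq n|g|/C$, and therefore
$$ \|g\|_S = \lim_{n \to \infty} \frac{|g^n|_S}{n} \geq \frac{|g|}{C} > 0, $$
so $g$ is undistorted. The only genuinely nontrivial input is the positivity of $|g|$, and this is precisely where properness and semi-simplicity are indispensable: without properness an infinite order element could fix a point and be distorted, and without semi-simplicity the infimum defining $|g|$ need not be realized, so the argument pinning $|g| > 0$ would break down. Everything else is a direct application of \Cref{boundwordlength} and the translation-length identities recorded before the statement.
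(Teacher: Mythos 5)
Your proof is correct and follows essentially the same route as the paper's: a lower bound $d_X(g^n x_0, x_0) \geq |g^n| = n|g|$ from the translation-length identities combined with the upper bound of \Cref{boundwordlength} to conclude $\|g\|_S \geq |g|/C > 0$. The only difference is that you spell out why $|g|>0$ (semi-simplicity to realize the infimum, properness to rule out a fixed point of an infinite order element), which the paper asserts in one line; this is a welcome clarification rather than a divergence.
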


\Proof Let $S$ be a finite symmetric generating set for $G$ and let $g \in G$ be an element of infinite order. Since the action is proper, the translation length $|g|$ is positive. By \Cref{boundwordlength}, for any fixed point $x_0$ in $X$, there exists $C>0$ such that 
$$ |g^n|_S = d_S(g^n,1) \geq C d_X(g^nx_0,x_0) \geq C|g^n|= n C|g|$$
This implies that $g$ is undistorted. \qed

Now applying \Cref{VC--undistorted-linear}, we have
\begin{cor} \label{conjugacygrowth}
Let $G$ be a finitely generated group acting on a CAT(0) space $X$ properly via semi-simple isometries. Then if $G$ has $b\VCYC$, the conjugacy growth function of $G$ is at most linear.
\end{cor}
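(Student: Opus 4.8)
The plan is to chain together the two immediately preceding results, so that no genuinely new argument is required. First I would invoke \Cref{proper-CAT(0)-undist}: since $G$ is finitely generated and acts on the CAT(0) space $X$ properly via semi-simple isometries, that lemma already guarantees that \emph{every} infinite order element of $G$ is undistorted. This is the only place where the geometric hypotheses enter, and the content there is the comparison $|g^n|_S = d_S(g^n,1) \geq C\, d_X(g^n x_0, x_0) \geq C n |g|$ together with the fact that a proper semi-simple action forces the translation length $|g|$ to be positive on infinite order elements.

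Once undistortion of all infinite order elements is in hand, I would simply apply the second assertion of \Cref{VC--undistorted-linear}, whose statement packages exactly the implication we need: if $G$ has $b\VCYC$ and every infinite order element is undistorted, then $G$ has at most linear conjugacy growth. To unwind this, let $\{V_1,\ldots,V_n\}$ be a witness to $b\VCYC$. Every element $g \in G$ generates a cyclic subgroup, which by definition of $b\VCYC$ is contained in a conjugate of some $V_i$, so every conjugacy class of $G$ is represented, up to conjugacy, inside some $V_i$. By \Cref{VC--undistorted-linear} the conjugacy growth of each $V_i$ in $G$ is linear (its finitely many conjugacy classes of finite subgroups contribute a bounded amount, and its undistorted infinite order elements contribute only linearly), and summing over the finitely many $V_i$ bounds the total conjugacy growth of $G$ linearly.

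Since both ingredients are already established, there is essentially no obstacle remaining: the work has been front-loaded into \Cref{proper-CAT(0)-undist} and \Cref{VC--undistorted-linear}. The one point worth stating explicitly is that the hypothesis $b\VCYC$ is phrased in terms of covering \emph{cyclic} subgroups by conjugates of the $V_i$, which is precisely what lets us conclude that every element, and hence every conjugacy class, lands up to conjugacy in one of the finitely many virtually cyclic witnesses.
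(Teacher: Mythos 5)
Your proposal is correct and follows exactly the paper's route: the paper derives this corollary by combining \Cref{proper-CAT(0)-undist} (every infinite order element is undistorted under such an action) with the second assertion of \Cref{VC--undistorted-linear}, which is precisely your two-step chain. Your additional unwinding of how the finitely many virtually cyclic witnesses cover all conjugacy classes is a faithful recapitulation of the lemma's content rather than a deviation.
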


This also leads to a different proof for \Cref{linear-bVCYC} in the positive characteristic case under the slightly stronger assumption of having BVC instead of $b\VCYC$.

\begin{thm}
Let $G \leq GL_d(\BK)$ be a finitely generated subgroup where $\BK$ is a field of positive characteristic. Then $G$ has BVC if and only if it is virtually cyclic.
\end{thm}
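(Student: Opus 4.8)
The plan is to prove the two directions separately, with the forward implication being immediate and the reverse being the substance. If $G$ is virtually cyclic, then $G$ itself furnishes a witness to BVC, so there is nothing to prove. For the converse, I would assume $G$ has BVC and split into two cases according to whether $G$ is virtually solvable. In the virtually solvable case, BVC implies $b\VCYC$, so the virtually solvable case (part (a)) of \Cref{bVCYC-thm} applies directly and forces $G$ to be virtually cyclic.

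The interesting case is when $G$ is \emph{not} virtually solvable, where I would argue by contradiction using the metric geometry developed above. First, since $G$ has BVC, \Cref{BVCfinitesubgroup} guarantees that the orders of finite subgroups of $G$ are bounded. This is precisely the hypothesis needed for \Cref{linear-act-building}, so $G$ acts properly on a CAT(0) space via semi-simple isometries. Because BVC implies $b\VCYC$, \Cref{conjugacygrowth} then yields that the conjugacy growth function of $G$ is at most linear.

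On the other hand, since $G$ is a non-virtually-solvable finitely generated linear group, \Cref{chara-poly-growth} says that its characteristic polynomial growth $\chi_S(n)$ grows exponentially. The key observation linking the two growth functions is that conjugate matrices have the same characteristic polynomial; hence two elements of $B_n(G,S)$ with distinct characteristic polynomials necessarily lie in distinct conjugacy classes. Consequently $\chi_S(n) \leq |B_n^c(G,S)|$, so the conjugacy growth of $G$ is bounded below by $\chi_S$ and is therefore exponential. This contradicts the at-most-linear bound from the previous step, so $G$ must be virtually cyclic after all.

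The argument is essentially an assembly of results already in hand, so I do not anticipate a genuine obstacle. The one point requiring care is verifying that BVC simultaneously supplies \emph{both} inputs to the geometric argument — the uniform bound on the orders of finite subgroups feeding \Cref{linear-act-building}, and the $b\VCYC$ property feeding \Cref{conjugacygrowth} — and ensuring the comparison $\chi_S(n) \leq |B_n^c(G,S)|$ is set up so that the exponential lower bound on characteristic polynomials transfers cleanly to a lower bound on conjugacy growth.
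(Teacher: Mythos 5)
Your proposal is correct and follows essentially the same route as the paper: reduce to the non-virtually-solvable case via \Cref{bVCYC-thm}, use \Cref{BVCfinitesubgroup} to bound orders of finite subgroups so that \Cref{linear-act-building} and \Cref{conjugacygrowth} give at most linear conjugacy growth, and contradict \Cref{chara-poly-growth}. Your explicit remark that conjugate matrices share characteristic polynomials, hence $\chi_S(n) \leq |B_n^c(G,S)|$, is exactly the (implicit) step the paper uses to derive the contradiction.
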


\Proof By \Cref{bVCYC-thm}, we can assume that $G$ is not virtually solvable. By \Cref{BVCfinitesubgroup}, if $G$ has BVC, then it has only finitely many conjugacy classes of finite subgroups. Hence by \Cref{linear-act-building}, $G$ acts properly on a CAT(0) space via semi-simple isometries. Thus, $G$ has linear conjugacy growth, which contradicts \Cref{chara-poly-growth}. \qed

\section{The Classifying Space for the Family of Cyclic Subgroups}\label{cyclic-EF}

In this section, we study the classifying space for the family of cyclic subgroups, denoted by $E_{\CYC}(G)$ for a group $G$. The results on its finiteness properties are largely analogous to those obtained for the classifying space for the family of virtually cyclic subgroups. We will first study for which groups $G$ the $G$-CW-complex $E_{\CYC}(G)$ can be of finite type. As it turns out most proofs given in the literature can be generalized slightly such that their conclusions apply to both $E_{\CYC}(G)$ as well as $\uu E G$. Second, we will deal with questions concerning the finite-dimensionality of $E_{\CYC}(G)$ and relate it to previously known results on the finite-dimensionality of $\uu E G$.

\begin{prop}\label{EC-EG-finite-type}
If a group $G$ has a model for $E_{\CYC} G$ of finite type, then it has a model for $EG$ of finite type. In particular, $G$ is finitely presented.
\end{prop}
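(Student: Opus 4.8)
The plan is to show that the hypothesis forces $G$ to be of type $F_\infty$, which is equivalent to admitting a finite-type model for $EG$ and in particular implies finite presentability. The idea is that the cell stabilizers of $E_{\CYC}(G)$ are cyclic, and cyclic groups have excellent finiteness properties, so that finiteness can be propagated from the stabilizers and the finite-type equivariant structure up to $G$ itself. First I would record two structural facts. Since the trivial subgroup is cyclic, the fixed-point set $E_{\CYC}(G)^{\{e\}}$ is the whole space and is contractible; thus $X := E_{\CYC}(G)$ is a contractible $G$-CW-complex which, by hypothesis, has only finitely many $G$-orbits of cells in each dimension, with every isotropy group cyclic. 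Secondly, every cyclic group is of type $F_\infty$: the infinite cyclic group has the finite classifying space $S^1$, and a finite cyclic group $\BZ/m$ admits a (periodic) free resolution of the trivial module that is finitely generated in every degree. In particular every cyclic group is finitely presented and $FP_\infty$.

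Next I would transfer these finiteness properties to $G$. For $FP_\infty$, consider the augmented cellular chain complex $C_*(X) \to \BZ$, which is a resolution because $X$ is contractible. Each module $C_n(X)$ is a finite direct sum of permutation modules $\BZ[G/H]$ with $H$ cyclic, and $\BZ[G/H] = \BZ G \otimes_{\BZ H} \BZ$. Choosing, for each such $H$, a resolution of the trivial $\BZ H$-module by finitely generated free $\BZ H$-modules and inducing up along $\BZ H \hookrightarrow \BZ G$ (which is exact, as $\BZ G$ is free over $\BZ H$) yields a finite-type free $\BZ G$-resolution of each $\BZ[G/H]$. Splicing these into $C_*(X)$ by the usual double-complex (Cartan--Eilenberg) construction produces a free resolution of $\BZ$ over $\BZ G$ that is finitely generated in every degree, so $G$ is $FP_\infty$. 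For finite presentability I would instead use that $X$ is in particular simply connected with finite $2$-skeleton and finitely presented cell stabilizers; the standard presentation read off from the action of a group on a simply connected complex with finite equivariant $2$-skeleton and finitely presented stabilizers then shows that $G$ is finitely presented (and hence, a fortiori, finitely generated).

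Finally, combining finite presentability with $FP_\infty$ gives that $G$ is of type $F_\infty$, hence admits a model for $EG$ of finite type; the ``in particular'' clause is precisely the finite presentation established above. I expect the only genuine work to be the bookkeeping in the splicing step that propagates $FP_\infty$ from the stabilizers and the finite-type quotient to $G$; since the stabilizers here are cyclic, and so visibly of type $F_\infty$, this is exactly the situation handled by the standard results on finiteness conditions for groups acting on contractible complexes, and it runs in parallel to the argument already used for the family $\VCYC$ in the opening proposition of Section~\ref{basic}.
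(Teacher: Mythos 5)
Your argument is correct, but it takes a genuinely different route from the paper. The paper's proof is a two-line appeal to the transitivity principle of L\"uck--Weiermann \cite[Proposition 5.1]{LuWe12}: since every cyclic group $C$ admits a model of finite type for $EC$, a finite-type model for $E_{\CYC}(G)$ can be promoted directly to a finite-type model for $EG$ by an equivariant gluing construction, with no detour through algebraic finiteness properties. You instead decompose the conclusion into ``$FP_\infty$ plus finitely presented implies $F_\infty$'' and verify the two halves separately: the $FP_\infty$ part via the augmented cellular chain complex of the contractible complex $E_{\CYC}(G)$, whose chain modules are finite sums of $\BZ[G/H]$ with $H$ cyclic and hence of type $FP_\infty$ over $\BZ G$, spliced together in the standard way (this is exactly Brown's finiteness criterion for groups acting on contractible complexes with $FP_\infty$ stabilizers and cocompact skeleta); and the finite presentability part via Brown's presentation theorem for actions on simply connected complexes with finite equivariant $2$-skeleton, finitely presented vertex stabilizers and finitely generated edge stabilizers. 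All hypotheses of both criteria are satisfied here, and Wall's theorem converts $F_\infty$ back into a finite-type model for $EG$, so the proof is complete. What the paper's approach buys is brevity and a stronger, purely topological statement (an explicit $G$-CW-model rather than just the homotopy-theoretic finiteness type); what yours buys is independence from the transitivity principle and a transparent view of exactly which finiteness properties of cyclic groups are being used at each stage.
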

\begin{proof} Note that there are models of finite type for $E C$ where $C$ is a cyclic group. Then the claim follows from the transitivity principle \cite[Proposition 5.1]{LuWe12}.
\end{proof}

Similar to \cite[Lemma 1.3]{vPW}, one obtains the following using the construction in \cite[Proposition 2.3]{Lu89} 

\begin{lem}
A group $G$ admits a finite $0$-skeleton for $E_{\mathcal{C}yc}G$ if and only if $G$ has $b \CYC$.
\end{lem}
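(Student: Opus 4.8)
The plan is to mirror the proof of \cite[Lemma 1.3]{vPW}, replacing $\VCYC$ by $\CYC$ and invoking Lück's inductive construction of classifying spaces for families from \cite[Proposition 2.3]{Lu89}. Recall the shape of that construction: given a family $\mathcal F$ and any disjoint union $X_0 = \coprod_{i} G/H_i$ of orbits with each $H_i \in \mathcal F$, one attaches equivariant cells of dimension $\geq 1$, all with stabilizers in $\mathcal F$, so as to turn $X_0$ into a model for $E_{\mathcal F}(G)$, provided $X_0^H$ is already non-empty for every $H \in \mathcal F$. No new $0$-cells are introduced in this process, and since every attached cell $G/K \times D^n$ has $K \in \mathcal F$, the fixed-point set $X^H$ stays empty for $H \notin \mathcal F$, because $(G/K)^H = \emptyset$ whenever $H$ is not conjugate to a subgroup of $K$ and $\mathcal F$ is closed under subgroups and conjugation.

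For the direction assuming $b\CYC$, let $\{C_1,\dots,C_n\}$ be a witness and put $X_0 = \coprod_{i=1}^n G/C_i$. I would verify the two fixed-point conditions directly: for $H \leq G$ one has $(G/C_i)^H \neq \emptyset$ precisely when $H$ is conjugate to a subgroup of $C_i$. If $H$ is cyclic this holds for some $i$ by the definition of $b\CYC$, so $X_0^H \neq \emptyset$; if $H$ is not cyclic then $H$ embeds into no $C_i$, since subgroups of cyclic groups are cyclic, so $X_0^H = \emptyset$. Hence $X_0$ meets the hypotheses above and extends to a model for $E_{\CYC}(G)$ with the same finite $0$-skeleton.

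For the converse, suppose $X$ is a model for $E_{\CYC}(G)$ with finite $0$-skeleton, so $X_0 = \coprod_{i=1}^n G/H_i$ for finitely many subgroups $H_i$. A $0$-cell with stabilizer $H_i$ lies in $X^{H_i}$, whence $X^{H_i} \neq \emptyset$ and therefore $H_i \in \CYC$. Given an arbitrary cyclic subgroup $C \leq G$, the set $X^C$ is contractible, in particular non-empty. Here I would use the standard fact that for a $G$-CW-complex $X$ the fixed-point set $X^C$ is a CW-subcomplex whose open $n$-cells are indexed by the $C$-fixed points of the orbits $(G/K)^C$ appearing among the equivariant $n$-cells; consequently a non-empty $X^C$ necessarily contains a $0$-cell, i.e. $X_0^C = X^C \cap X_0 \neq \emptyset$. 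Thus $C$ is conjugate to a subgroup of some $H_i$, and $\{H_1,\dots,H_n\}$ is a witness to $b\CYC$.

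The two points needing care are exactly the inputs flagged above: that the construction of \cite{Lu89} leaves the prescribed $0$-skeleton untouched while only adding cells with cyclic stabilizers, and that non-emptiness of the fixed-point subcomplex $X^C$ forces non-emptiness of its $0$-skeleton $X_0^C$. Both are general $G$-CW facts rather than anything special to $\CYC$; the only place where cyclicity enters essentially is the observation that subgroups of cyclic groups are cyclic, which is what makes the emptiness condition $X_0^H = \emptyset$ for non-cyclic $H$ go through and keeps the argument fully parallel to the $\VCYC$ case.
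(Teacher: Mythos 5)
Your proof is correct and follows exactly the route the paper intends: it is the proof of \cite[Lemma 1.3]{vPW} with $\VCYC$ replaced by $\CYC$, using Lück's construction from \cite[Proposition 2.3]{Lu89} for one direction and the fact that a non-empty fixed-point subcomplex must contain a $0$-cell for the other. The only point specific to $\CYC$ --- that subgroups of cyclic groups are cyclic, so non-cyclic $H$ have empty fixed sets on $\coprod_i G/C_i$ --- is correctly identified and handled.
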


\begin{ex}\label{inf-dihedral}
Let $D_{\infty} = \BZ \rtimes \BZ/2 = \langle t,s \mid s^2 = 1, sts = t^{-1} \rangle$ be the infinite dihedral group. Then $\langle t \rangle$, $\langle s \rangle$ and $\langle ts \rangle$ are witnesses to $b\CYC$ for $D_{\infty}$ since $t s t^{-1} = t^{1-2n} s$. A straightforward calculation also shows that there cannot be fewer witnesses.
\end{ex}

In \cite{GW2013} Lemma 2.2 played an important role for the proof of \ref{ConjA} for solvable groups. The statements of Lemma 2.2 hold with BVC replaced by $b\CYC$. For the convenience of the reader, we present part (a) of this lemma in a more general context.

\begin{lem} Let $G$ be a group satisfying $b \family F$ where $\family F$ is a family of Noetherian subgroups, i.e. any subgroup of an element $H \in \family F$ is finitely generated. Then $G$ satisfies the ascending chain condition on normal subgroups.
\end{lem}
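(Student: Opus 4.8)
The plan is to exploit the fact that a Noetherian group satisfies the ascending chain condition (ACC) on \emph{all} of its subgroups, and then to transport this to an ACC on normal subgroups of $G$ via the covering property furnished by $b\family F$. First I would record that each member $H \in \family F$ satisfies ACC on subgroups: given an ascending chain of subgroups of $H$, its union is again a subgroup of $H$, hence finitely generated by hypothesis, so its finitely many generators already lie in a single term of the chain, forcing stabilization. With this in hand, I fix a witness $H_1, \dots, H_n \in \family F$ for $b\family F$, so that every cyclic subgroup of $G$ is contained in a conjugate of some $H_i$.

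Then I would take an arbitrary ascending chain $N_1 \leq N_2 \leq \cdots$ of normal subgroups of $G$. For each fixed $j \in \{1, \dots, n\}$, the intersections $N_1 \cap H_j \leq N_2 \cap H_j \leq \cdots$ form an ascending chain of subgroups of $H_j$, which stabilizes by the first step. Since there are only finitely many indices $j$, I can choose a single $M$ with $N_i \cap H_j = N_M \cap H_j$ for all $i \geq M$ and all $j$.

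The final step is to upgrade the stabilization of the intersections to stabilization of the $N_i$ themselves, and here normality does the work. Fixing $i \geq M$ and any $g \in N_i$, the covering property yields $x \in G$ and an index $j$ with $\langle g \rangle \leq x H_j x^{-1}$, so that $x^{-1} g x \in H_j$; since $N_i$ is normal we also have $x^{-1} g x \in N_i$, whence $x^{-1} g x \in N_i \cap H_j = N_M \cap H_j \leq N_M$. Using normality of $N_M$ once more gives $g = x (x^{-1} g x) x^{-1} \in N_M$, so $N_i \leq N_M$ and therefore $N_i = N_M$. Thus the chain stabilizes, as required.

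I do not expect a serious obstacle. The only point demanding care is the interplay in the last step: one must use the covering property to conjugate $g$ \emph{into} some $H_j$, and then invoke normality of both $N_i$ and $N_M$ to pass back and forth between $g$ and its conjugate $x^{-1} g x$. Everything else is the routine reduction of an ACC statement to a finite collection of Noetherian subgroups.
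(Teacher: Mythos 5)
Your proof is correct and follows essentially the same route as the paper: decompose each normal subgroup via the covering property and normality as a union of conjugates of its intersections with the witnesses $H_i$, stabilize those finitely many chains of intersections using that each $H_i$ is Noetherian, and conclude. You merely spell out the final upgrade step (conjugating an element of $N_i$ into some $H_j$ and back) that the paper leaves implicit.
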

\begin{proof}
The group $G$ can be written as $G = \bigcup_{i = 1}^n \bigcup_{g \in G} H_i^g$ where $\{ H_i \mid 1 \leq i \leq n \}$ is a witness to $b \family F$. But then any normal subgroup $N$ of $G$ can be likewise expressed as $N = \bigcup_{i = 1}^n \bigcup_{g \in G} (N \cap H_i)^g$.

Let $(N_j)$ be an ascending chain of normal subgroups of $G$. For any $i$, the chain $(N_j \cap H_i)_j$ has to stabilize since $H_i$ was Noetherian, i.e. there exists $j_i$ such that $N_j \cap H_i = N_{j+1} \cap H_i$ for all $j \geq j_i$. Then the original chain stabilizes at $j_{\max} = \max_{1 \leq i \leq n} j_i$.
\end{proof}

\begin{rem}
Observe that $b\CYC$ fails to pass to finite index overgroups, a counterexample can be given by $\BZ \subgroup \BZ \times \BZ/2$.
\end{rem}

Note that there is no assumption about absence of torsion in the following (cf. \Cref{primitiveBVC})

\begin{observation}\label{primitive-conj-bCYC}
If $G$ has $b\CYC$, then $G$ has only finitely many primitive conjugacy classes.
\end{observation}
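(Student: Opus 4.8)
The plan is to read off the primitive conjugacy classes directly from a witness to $b\CYC$. Recall from the discussion preceding \Cref{primitiveBVC} that a primitive element is automatically of infinite order. So I would fix a witness $\{C_1, \ldots, C_n\}$ of cyclic subgroups to $b\CYC$ for $G$ and discard those $C_i$ which are finite; after reindexing, keep the infinite cyclic ones $C_1, \ldots, C_m$ and choose a generator $c_i$ of each, so that $C_i = \langle c_i \rangle$.

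The key step is to show that every primitive element $g \in G$ is conjugate to one of the finitely many elements $c_1^{\pm 1}, \ldots, c_m^{\pm 1}$. Given a primitive $g$, the subgroup $\langle g \rangle$ is infinite cyclic, so by the defining property of the witness there is some $h \in G$ and some index with $\langle g \rangle \subseteq h C_i h^{-1}$. Since $\langle g \rangle$ is infinite, the group $C_i$ must itself be infinite, so $i \leq m$ and $h^{-1} g h = c_i^k$ for some $k \in \BZ$. Conjugation preserves primitivity, so $h^{-1} g h$ is primitive; being a power of $c_i$, it can only be primitive if $\lvert k \rvert = 1$, for otherwise it would be a proper power of $c_i$ in $G$. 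Hence $g$ is conjugate to $c_i$ or to $c_i^{-1}$.

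This shows that there are at most $2m \leq 2n$ conjugacy classes of primitive elements, proving the claim. I expect no serious obstacle here: the only points requiring care are the observation that primitivity forces the exponent $k$ to satisfy $\lvert k \rvert = 1$ once $g$ has been conjugated into an infinite cyclic witness, and the remark that the finite members of the witness, containing no infinite-order elements, never capture a primitive element. Note also that no torsion-freeness hypothesis is needed, in contrast to \Cref{primitiveBVC}, since the argument localizes entirely to the infinite cyclic witnesses.
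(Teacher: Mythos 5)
Your proof is correct and is exactly the immediate argument the paper has in mind (the statement is left as an observation without a written proof): a primitive element has infinite order, so it conjugates into an infinite cyclic witness $\langle c_i\rangle$, and primitivity forces the exponent to be $\pm 1$, giving at most $2n$ primitive conjugacy classes. Your closing remark that no torsion-freeness is needed matches the paper's own comment preceding the observation.
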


\begin{lem}\label{bcyc-orient-vc}
Let $F$ be a finite group and suppose that $V = F \rtimes_{\varphi} \BZ$ is a group with $b\CYC$. Then $F = 1$.
\end{lem}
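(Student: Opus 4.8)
The plan is to prove the contrapositive: assuming $F \neq 1$, I will produce an infinite family of cyclic subgroups witnessing the failure of $b\CYC$, in direct analogy with the example $\BZ \times \BZ/2$ treated earlier. First I would reduce to the case of a direct product. Since $F$ is finite, the image of $\varphi \colon \BZ \to \Aut(F)$ is finite cyclic, so there is a least $m \geq 1$ with $\varphi(m) = \id$. Then $t^m$ is central, and $V_0 := \langle F, t^m\rangle = F \times \langle t^m\rangle \cong F \times \BZ$ is a subgroup of index $m$ in $V$. Because $\CYC$ is closed under subgroups, \Cref{finite-index-bf} shows that $V_0$ inherits $b\CYC$ from $V$. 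As the torsion subgroup of $V_0$ is exactly $F$, it suffices to prove the statement for $V = F \times \BZ$.

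So assume $V = F \times \langle t\rangle$ has $b\CYC$ and, for contradiction, that $F \neq 1$. By Cauchy's theorem pick $f \in F$ of prime order $p$. For $n \geq 1$ set $x_n = (f, p^n) \in F \times \BZ$ and $C_n = \langle x_n\rangle$; each $x_n$ has infinite order, and since the $\BZ$-factor is central a conjugate of $x_n$ has the shape $(hfh^{-1}, p^n)$, whose $F$-component again has order $p$. Let $\{D_1, \dots, D_N\}$ be a witness to $b\CYC$. Each $C_n$ is conjugate into some $D_{j}$, so by the pigeonhole principle there is a single $D = \langle d\rangle$ into which $C_n$ is conjugate for all $n$ in some infinite set $I$.

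The heart of the argument, and the step I expect to be the main obstacle, is then a divisibility analysis inside $\langle d\rangle$. Writing $d = (d_0, a)$, the subgroup $\langle d\rangle$ must contain elements of infinite order, forcing $a \neq 0$; replacing $d$ by $d^{-1}$ I may assume $a > 0$. For each $n \in I$ the conjugate $(h_n f h_n^{-1}, p^n)$ of $x_n$ lies in $\langle d\rangle$, hence equals $d^{k_n} = (d_0^{k_n}, a k_n)$ for some $k_n$; comparing components gives $a k_n = p^n$ together with the requirement that $d_0^{k_n}$ have order $p$. The first equation forces $a = p^s$ for a fixed $s$ and $k_n = p^{n-s}$. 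Writing $\ord(d_0) = p^{e} r'$ with $\gcd(r', p) = 1$, the order of $d_0^{p^{n-s}}$ equals $p^{e - \min(e,\, n-s)} r'$; requiring this to be $p$ forces $r' = 1$ and $e - \min(e,\, n-s) = 1$, which holds only for the single value $n = s + e - 1$. This contradicts the infinitude of $I$, and therefore $F = 1$.
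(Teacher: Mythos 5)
Your proof is correct, and after the (identical) reduction to the direct product $F\times\BZ$ via \Cref{finite-index-bf}, it diverges from the paper's argument in the key step. The paper takes $c\in F$ of \emph{maximal} order and a prime $p$ dividing that order, shows that the elements $(c,p^n)$ are primitive and pairwise non-conjugate, and then invokes \Cref{primitive-conj-bCYC} (a group with $b\CYC$ has only finitely many primitive conjugacy classes). You instead take $f$ of prime order $p$ via Cauchy, and rather than proving primitivity you argue directly: pigeonhole the subgroups $\langle(f,p^n)\rangle$ into a single cyclic witness $\langle(d_0,a)\rangle$ and derive a contradiction from the constraints $ak_n=p^n$ and $\ord(d_0^{k_n})=p$. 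Your route is more elementary and self-contained --- it needs neither the maximality of the order of $f$ (which the paper uses to force $\langle x\rangle=\langle c\rangle$ in its primitivity check) nor the primitivity observation --- at the cost of a slightly longer divisibility computation. The paper's route is shorter on the page because the primitivity machinery is already set up and reused elsewhere. Both arguments are complete; your divisibility analysis ($a=p^s$, $k_n=p^{n-s}$, and $e-\min(e,n-s)=1$ holding for at most one $n$) checks out.
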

\begin{proof}
Let $d$ be the order of $\varphi$. Then $F \times d\BZ$ is a subgroup of index $d$ in $F \rtimes_{\varphi} \BZ$. Hence by \Cref{finite-index-bf}, we can assume $V= F\times \BZ$.

Now assume that $F$ is nontrivial. Let $c$ be an element of maximal order in $F$ and let $p$ be a prime that divides its order. Then for any $n\geq 1$, $g_n = (c,p^n)\in F \times \BZ$ is primitive in $F\times \BZ$. If fact, if $(x,k)^m =(x^m,mk) = (c,p^n)$, for some $m > 1$, then $p$ divides $m$. On the other hand since $x^m = c$, $c$ lies in the subgroup generated by $x$. But since $c$ has maximal order, we have  $x$ and $c$ generate the same cyclic subgroup in $F$. But since $p$ divides the order of $c$ and $m$, this cannot happen. When $n\neq m$, $g_n$ is not conjugate to $g_m$ since the second coordinate differs. Thus by \Cref{primitive-conj-bCYC}, the claim follows.
\end{proof}

\begin{prop}\label{bc-vc}
A virtually cyclic group $V$ has $b \CYC$ if and only if $V$ is finite, infinite cyclic or infinite dihedral.
\end{prop}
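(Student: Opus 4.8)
The plan is to prove both directions of the equivalence, using the structure theorem for virtually cyclic groups (\Cref{vcstru}) to reduce to the three cases.

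\textbf{The easy direction.}
First I would verify that each of the three named groups has $b\CYC$. The finite case is trivial: a finite group has finitely many cyclic subgroups, so it automatically has $b\CYC$. The infinite cyclic group $\BZ$ has $b\CYC$ with the single witness $\langle 1 \rangle = \BZ$ itself, since every cyclic subgroup is already contained in $\BZ$. For the infinite dihedral group $D_\infty$, the computation is exactly \Cref{inf-dihedral}, which exhibits $\langle t\rangle$, $\langle s\rangle$, $\langle ts\rangle$ as witnesses. So all three groups have $b\CYC$.

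\textbf{The hard direction.}
Conversely, suppose $V$ is virtually cyclic with $b\CYC$; I must show $V$ is finite, infinite cyclic, or infinite dihedral. If $V$ is finite we are done, so assume $V$ is infinite. By \Cref{vcstru}, $V$ contains a unique maximal normal finite subgroup $F$ with $V/F$ either infinite cyclic (orientable case) or infinite dihedral (nonorientable case). The goal is to force $F = 1$. In the orientable case, $V$ is an extension $1 \to F \to V \to \BZ \to 1$; since $\BZ$ is free this extension splits, so $V \cong F \rtimes_\varphi \BZ$, and \Cref{bcyc-orient-vc} applies directly to give $F = 1$, whence $V \cong \BZ$ is infinite cyclic.

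\textbf{Handling the nonorientable case.}
The main obstacle is the nonorientable case, where $V/F \cong D_\infty$. Here I would pass to the orientation subgroup: let $V^+ \subgroup V$ be the preimage of the index-two cyclic subgroup $\BZ \subgroup D_\infty$, so that $V^+$ is an orientable virtually cyclic group of index two in $V$ fitting into $1 \to F \to V^+ \to \BZ \to 1$. By \Cref{finite-index-bf}, since $V$ has $b\CYC$ (which is $b\family F$ for $\family F = \CYC$) and $V^+$ has finite index, $V^+$ also has $b\CYC$. Now $V^+$ is orientable with maximal normal finite subgroup $F$ (note $F$ remains normal and finite in $V^+$, and is still maximal there since any larger normal finite subgroup would contradict $V^+/F \cong \BZ$ being torsion-free), so the orientable argument above forces $F = 1$. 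Therefore $V/F \cong V \cong D_\infty$ is infinite dihedral. This completes the proof, the only subtlety being to confirm that the finite normal subgroup $F$ of $V$ is still the relevant maximal normal finite subgroup of the finite-index orientable subgroup $V^+$, which follows because $F \cap V^+ = F$ as $F \subgroup V^+$ by construction.
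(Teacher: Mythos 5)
Your proof is correct and follows essentially the same route as the paper: reduce via the structure theorem \Cref{vcstru}, apply \Cref{bcyc-orient-vc} in the orientable case, and in the nonorientable case pass to the index-two subgroup isomorphic to $F \rtimes \BZ$ (which inherits $b\CYC$ by \Cref{finite-index-bf}) to force $F = 1$, with \Cref{inf-dihedral} supplying the converse for $D_\infty$. The extra remarks about splitting and about $F$ being maximal in $V^+$ are harmless (maximality is not even needed to invoke \Cref{bcyc-orient-vc}).
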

\begin{proof}
By \Cref{bcyc-orient-vc} and \Cref{vcstru}, the only case left to consider is if $V$ is nonorientable, i.e. there is an exact sequence
$$
1 \to F \to V \to D_{\infty} \to 1
$$
with $F$ finite. But then $V$ has a finite index subgroup isomorphic to $F \rtimes \BZ$, hence $F = 1$ and the claim follows from \Cref{inf-dihedral}.
\end{proof}

\begin{lem} \label{inf-dih-fintype}
There exists a model of finite type for $E_{\CYC} D_{\infty}$ and any model for $E_{\CYC} D_{\infty}$ has to be infinite-dimensional.
\end{lem}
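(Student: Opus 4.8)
The plan is to prove two separate assertions about $E_{\CYC} D_\infty$: the existence of a finite-type model, and the necessary infinite-dimensionality of every model.

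For the finite-type claim, my first step is to identify a good family of cyclic subgroups of $D_\infty = \langle t, s \mid s^2 = 1, sts = t^{-1}\rangle$. As computed in \Cref{inf-dihedral}, the cyclic subgroups up to conjugacy are: the trivial group, the infinite cyclic group $\langle t\rangle$ and its subgroups $\langle t^k\rangle$, and the order-two reflection subgroups $\langle t^m s\rangle$. The reflections fall into exactly two conjugacy classes (indexed by the parity of $m$), since $t s t^{-1} = t^2 s$. I would then invoke the transitivity/pushout machinery for classifying spaces of families, as in \cite[Proposition 5.1]{LuWe12} used in \Cref{EC-EG-finite-type}. Concretely, since $D_\infty$ is finitely presented and the relevant isotropy subgroups ($\{1\}$, a copy of $\BZ$, and finitely many copies of $\BZ/2$) each admit finite-type models for their own classifying spaces (for $\BZ$, take $E\BZ = \BR$ with a free cocompact action; for the family-of-cyclic problem over $D_\infty$ one reduces to building cells over these isotropy types), one assembles a model by coning off or gluing along the finitely many conjugacy classes of maximal cyclic subgroups. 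The point is that $D_\infty$ has $b\CYC$ by \Cref{bc-vc}, giving a finite $0$-skeleton, and the higher-dimensional cells can be added finitely in each dimension because there are only finitely many conjugacy classes of cyclic subgroups and each has a finitely generated Weyl group.

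For the infinite-dimensionality claim, my approach is cohomological. The key obstruction comes from the reflection subgroups $\BZ/2 \leq D_\infty$, which are cyclic and hence lie in the family $\CYC$. If $X = E_{\CYC} D_\infty$ were finite-dimensional, then for each such $C = \langle t^m s\rangle \cong \BZ/2$ the fixed-point set $X^C$ would be a contractible finite-dimensional complex on which the Weyl group $W = N_{D_\infty}(C)/C$ acts. The hard part, and the crux of the argument, is to show that this forces a contradiction with the infinite cohomological dimension of $\BZ/2$. The standard mechanism is that a finite-dimensional model for $E_\CYC G$ would yield, via the isotropy spectral sequence or via Bredon cohomology, a bound on $\underline{\underline{\mathrm{cd}}}$-type invariants that is incompatible with the presence of a finite subgroup: a finite cyclic group like $\BZ/2$ has infinite ordinary cohomological dimension over $\BZ$, and since $\BZ/2 \in \CYC$ its fixed point set is contractible, but one checks that a finite-dimensional $X$ cannot have all these fixed point sets contractible while remaining a free-away-from-$\CYC$ complex.

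Let me sharpen the infinite-dimensionality argument, since this is where the real content lies. I would argue by restricting to a reflection subgroup $C \cong \BZ/2$ and examining the quotient $E_\CYC D_\infty / D_\infty$ or, more cleanly, comparing with the family $\FIN$. Since every finite subgroup of $D_\infty$ is cyclic, $\CYC$ contains $\FIN$, and in fact for a virtually cyclic group the difference between these families is concentrated in the infinite cyclic isotropy. A cleaner route is to use the algebraic criterion: a finite-dimensional model for $E_\CYC G$ bounds the Bredon cohomological dimension $\mathrm{cd}_\CYC(G)$, and I would show $\mathrm{cd}_\CYC(D_\infty) = \infty$ by exhibiting, for the constant Bredon module $\underline{\BZ}$ (or a suitable coefficient system supported on the $\BZ/2$-isotropy), nonvanishing Bredon cohomology in arbitrarily high degrees — this pulls back from the fact that $H^*(\BZ/2; \BZ)$ is nonzero in all even degrees. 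The main obstacle I anticipate is making the transition from topological finite-dimensionality to the Bredon-cohomological obstruction precise, and in particular handling the coefficient system over the orbit category $\mathrm{Or}_\CYC(D_\infty)$ correctly so that the periodic cohomology of the reflection subgroups genuinely survives rather than being killed by the contractibility of the fixed point sets demanded by the classifying-space property.
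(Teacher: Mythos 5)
Your infinite-dimensionality argument has a genuine gap: you locate the obstruction in the reflection subgroups $C=\langle t^m s\rangle\cong\BZ/2$, but these cannot produce any obstruction. They lie in $\CYC$, so the classifying-space property only demands that $X^C$ be contractible (no freeness is required anywhere near them), and their Weyl groups are trivial: the centralizer of a reflection in $D_\infty$ is the reflection itself, so $N_{D_\infty}(C)=C$ and the group you propose to act on $X^C$ is $\{1\}$. Consequently the ``periodic cohomology of $\BZ/2$'' never enters, and no Bredon-cohomological contradiction can be extracted from this isotropy type (compare: $E_{\CYC}(\BZ/2)$ is a point). The subgroup that actually forces infinite-dimensionality is the \emph{infinite} cyclic subgroup $\langle t\rangle$: it is maximal cyclic (the only subgroup of $D_\infty$ properly containing it is $D_\infty$ itself, which is not cyclic), so every isotropy group on $X^{\langle t\rangle}$ equals $\langle t\rangle$, and the Weyl group $N_{D_\infty}(\langle t\rangle)/\langle t\rangle = D_\infty/\langle t\rangle\cong\BZ/2$ acts \emph{freely} on the contractible complex $X^{\langle t\rangle}$, which is therefore a model for $E\BZ/2$ and must be infinite-dimensional. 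This is exactly \Cref{maximal-cyclic-ecyc-infinite-dim}, which the paper applies here.

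The existence half also needs repair. Your justification rests on the assertion that $D_\infty$ has only finitely many conjugacy classes of cyclic subgroups; this is false, since the normal subgroups $\langle t^k\rangle$, $k\geq 1$, are pairwise non-conjugate. What is true is that there are finitely many conjugacy classes of \emph{maximal} cyclic subgroups (\Cref{inf-dihedral}), which gives a finite $0$-skeleton but not, by itself, finite type; and ``finitely generated Weyl groups'' is not the relevant finiteness condition for adding higher cells. Rather than invoking general assembly machinery, the paper simply exhibits an explicit finite-type model: the join $E=\BZ * E\BZ/2$ with $t\cdot[x,y,q]=[x+2,y,q]$ and $s\cdot[x,y,q]=[-x,s\cdot y,q]$, and verifies directly that the isotropy groups are trivial, $\langle t^x s\rangle$, or $\langle t\rangle$, and that all relevant fixed-point sets are contractible. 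If you want to keep an abstract approach for the finite-type claim you would need to state and verify the precise criterion (finite $0$-skeleton together with finiteness properties of the Weyl groups $W_{D_\infty}H$ for all $H\in\CYC$, all of which are finite dihedral or trivial here), but the explicit model is both shorter and self-contained.
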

\begin{proof}
Let $D_{\infty} = \BZ \rtimes \BZ/2 = \langle s,t \mid s^2 = 1, sts = t^{-1} \rangle$. We claim that the join $E = \BZ * E \BZ/2$, given an appropriate action, is a model for $E_{\mathcal{C}yc} D_{\infty}$. We write $[x,y,q]$ for an element in $E$, where $x \in \BZ$, $y \in E \BZ/2$ and $q \in [0,1]$. Note that $[x,y,0] = [x,y',0]$ and $[x,y,1] = [x',y,1]$ for all $x,x' \in \BZ$ and $y,y' \in E \BZ/2$.
We then define the action as follows:
\begin{align*}
    t \cdot [x,y,q] &= [x+2, y, q] \\
    s \cdot [x,y,q] &= [-x, s\cdot y, q] 
\end{align*}
Then one observes that the stabilizer of $[x,y,q]$ with $0 < q < 1$ is trivial. The stabilizer of $[x,y,0]$ is equal to $\langle t^x s \rangle$ and the stabilizer of $[x,y,1]$ equals $\langle t \rangle$. One furthermore checks that for $n \neq 0$
$$
E^{\langle t^n \rangle} = E \BZ/2 \simeq * 
$$
and for $n$ arbitrary 
$$
E^{\langle t^n s \rangle} = \BZ^{\langle t^n s \rangle} = \{ n \}.
$$
Since $E$ itself is contractible as well, it follows that $E$ is a model for $E_{\CYC}(G)$ of finite type.

The claim about the infinite-dimensionality of any model for $E_{\CYC}(G)$ follows from \Cref{maximal-cyclic-ecyc-infinite-dim} below by noting that $\langle t \rangle$ is a normal maximal cyclic subgroup of $D_{\infty}$. Alternatively observe that $E/D_{\infty}$ is homotopy equivalent to the suspension of $\BR P^{\infty}$.
\end{proof}

\begin{cor}\label{vc-finite-type}
Let $G$ be a virtually cyclic group, then it has a model for $E_{\CYC}(G)$ of finite type if and only if it is finite, infinite cyclic or infinite dihedral.
\end{cor}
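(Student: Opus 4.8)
The plan is to package the accumulated results into a clean biconditional. The statement concerns a virtually cyclic group $G$ and asks when $E_{\CYC}(G)$ admits a model of finite type. First I would dispose of the easy direction. If $G$ is finite, infinite cyclic, or infinite dihedral, then a finite-type model exists: for finite $G$ and for infinite cyclic $G$ one uses the observation recorded in the proof of \Cref{EC-EG-finite-type} that cyclic groups have finite-type models for $E_{\CYC}$ (indeed for a cyclic group $C$ the point itself, fixed by the whole group, is a model), and for $D_{\infty}$ this is exactly the content of \Cref{inf-dih-fintype}, where the join $\BZ * E\BZ/2$ is exhibited as a finite-type model. So the forward implication is a short case check invoking \Cref{inf-dih-fintype} for the dihedral case.

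For the converse, the key reduction is that a finite-type model is in particular one with finite $0$-skeleton, so by the lemma preceding \Cref{inf-dihedral} (characterizing finite $0$-skeleta for $E_{\CYC}G$) the group $G$ must have $b\CYC$. At this point the classification of virtually cyclic groups with $b\CYC$ does all the work: by \Cref{bc-vc}, a virtually cyclic group has $b\CYC$ if and only if it is finite, infinite cyclic, or infinite dihedral. Hence having a finite-type model forces $G$ into exactly this list, which is the desired conclusion.

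Putting these together, the corollary is essentially immediate once \Cref{bc-vc} and \Cref{inf-dih-fintype} are in hand, since the finite-type hypothesis is squeezed between the two: it implies $b\CYC$ (hence membership in the list via \Cref{bc-vc}) and is implied by membership in the list (via the finite-type constructions). The main subtlety worth flagging is purely logical rather than geometric: one must check that \Cref{bc-vc} and the realizability results cover the same three classes, so that the biconditional closes up with no gap. There is no genuine obstacle here; the real content was already expended in proving \Cref{bcyc-orient-vc}, \Cref{bc-vc}, and \Cref{inf-dih-fintype}, and this corollary simply harvests them.
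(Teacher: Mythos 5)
Your overall structure matches the paper's proof exactly: the converse is harvested from \Cref{bc-vc} via the finite-$0$-skeleton characterization of $b\CYC$, and the forward direction is a case check using \Cref{inf-dih-fintype} for $D_\infty$ and explicit models otherwise. There is, however, one small but genuine slip in your easy direction: the ``finite'' case of the corollary includes finite groups that are \emph{not} cyclic (e.g.\ $\BZ/2\times\BZ/2$ is virtually cyclic and finite), and for such a group the one-point space is not a model for $E_{\CYC}(G)$, since $E_{\CYC}(G)^G$ must be empty when $G\notin\CYC$ but a point is fixed by all of $G$. Your appeal to ``cyclic groups have finite-type models'' therefore does not cover this subcase. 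The paper handles it by invoking the standard bar-construction, which produces a finite-type model of $E_{\mathcal F}(G)$ for any finite group $G$ and any family $\mathcal F$; with that substitution your argument closes up and coincides with the paper's.
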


\Proof By \Cref{bc-vc}, we only need to prove that there is a model of finite type for $E_{\CYC}(G)$ if $G$ is finite, infinite cyclic or infinite dihedral. If $G$ is a finite group, then the standard bar-construction provides such a model. If $G$ is infinite cyclic, we can take $E_{\CYC}(G)$ to be a point. In case $G$ is infinite dihedral \Cref{inf-dih-fintype} provides a model of finite type. \qed

\textbf{Proof of \ref{thm:ecyc-g-conjecture}}:  By \Cref{bVCYC-thm} and \Cref{vc-finite-type} and the fact that both $b \CYC$ and $b\VCYC$ implies BVC, we only need to show if an elementary amenable group has a finite type model for $E_{\CYC}(G)$, then it is virtually cyclic. Since Kropholler's class of groups $\operatorname{LH}\mathfrak{F}$ contains the class of elementary amenable groups, \cite[Theorem B]{Kro93} implies that an elementary amenable group of type $FP_{\infty}$ has a bound on the orders of its finite subgroups and finite Hirsch length. Hence by 
\Cref{EC-EG-finite-type}, $G$ cannot have a model of finite type for $E_{\CYC}(G)$ if it has infinite Hirsch length. When $G$ has finite Hirsch length, by \cite{HiLi}, it is locally finite by virtually solvable, i.e. it has a virtually solvable quotient with locally finite kernel. Thus by Kropholler's result, $G$ is finite by virtually solvable. Now by \Cref{bfquotient} and \Cref{bVCYC-thm}~(a), $G$ is finite by virtually cyclic. Hence $G$ is virtually cyclic.  \qed

Even though $\uu EG$ is conjecturally never of finite type except in trivial cases, finite-dimensional models for $\uu EG$ abound for reasonable classes of groups. For example, finite-dimensional models for $\uu E G$ have been constructed for hyperbolic groups \cite{JL}, CAT(0) groups \cite{Lueck2009}, elementary amenable groups of finite Hirsch length \cite{FluchNucinkis2013, DegrijsePetrosyan2013} and also certain linear groups \cite{DegrijseKoehlPetrosyan} such as discrete subgroups of $\GL_d(\BR)$. The question arises in which cases we can expect a finite-dimensional model for $E_{\CYC}(G)$. We will first settle the question in the case that $G$ is virtually cyclic and then draw a couple of immediate conclusions in the general case. Let us first record the following simple observation.

\begin{observation}\label{restricting-ecyc-model}
Let $H$ be a subgroup of a group $G$ and let $X$ be a model for $E_{\CYC}(G)$, then $\res_H^G X$ is a model for $E_{\CYC}(H)$. 
\end{observation}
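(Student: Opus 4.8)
The plan is to verify the two defining properties of a classifying space for a family directly on the level of fixed-point sets, using the characterization recalled in the introduction: for any group $\Gamma$ and family $\family{F}$, a $\Gamma$-CW-complex $Y$ is a model for $E_{\family{F}}(\Gamma)$ precisely when $Y^K$ is contractible for every $K \in \family{F}$ and empty for every $K \notin \family{F}$.

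First I would record that $\res_H^G X$ is genuinely an $H$-CW-complex. This is the standard fact that restriction of a $G$-CW-complex along a subgroup $H \subgroup G$ yields an $H$-CW-complex: each $G$-cell of the form $G/K \times D^n$ restricts, via the double coset decomposition of $G/K$ as an $H$-set, namely $\res_H^G(G/K) = \bigsqcup_{H g K} H/(H \cap g K g^{-1})$ over $H g K \in H \backslash G / K$, to a disjoint union of $H$-cells $H/(H \cap g K g^{-1}) \times D^n$. Thus the $G$-CW-structure on $X$ induces an $H$-CW-structure on $\res_H^G X$.

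The heart of the matter is the observation that fixed points are unchanged by restriction: for any subgroup $L \subgroup H$ one has the literal equality of spaces $(\res_H^G X)^L = X^L$, since the underlying space and the action of $L$ are the same whether we regard $L$ inside $H$ or inside $G$. Here I would invoke that cyclicity is an intrinsic property of $L$, so that $L \in \CYC(H)$ if and only if $L \in \CYC(G)$. Consequently, if $L$ is cyclic then $X^L$ is contractible because $X$ is a model for $E_{\CYC}(G)$, while if $L$ is not cyclic then $X^L = \emptyset$ for the same reason. Combining this with the fixed-point characterization above shows that $\res_H^G X$ is a model for $E_{\CYC}(H)$.

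There is essentially no serious obstacle here; the statement is a formal consequence of the fixed-point characterization together with the fact that the family $\CYC$ behaves well under restriction. The only point meriting a word of justification is the CW-structure on $\res_H^G X$, which is entirely standard.
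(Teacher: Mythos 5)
Your argument is correct and is exactly the standard justification the paper leaves implicit: the statement appears only as an unproved ``Observation,'' and the intended reasoning is precisely the fixed-point characterization together with the facts that $(\res_H^G X)^L = X^L$ for $L \subgroup H$, that cyclicity of $L$ is intrinsic, and that restriction along $H \subgroup G$ preserves the CW-structure via the double coset decomposition. Nothing further is needed.
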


Recall that the classifying space $EF$ of a non-trivial finite group $F$ cannot be finite-dimensional \cite[\nopp VIII.2.5]{Brown1982}.

\begin{lem}\label{maximal-cyclic-ecyc-infinite-dim}
Suppose $H \subgroup G$ is a maximal cyclic subgroup and assume that $[N_G(H) : H]$ is finite but not equal to one, where $N_G(H)$ is the normalizer of $H$ in $G$. Then any model for $E_{\CYC}(G)$ has to be infinite-dimensional.
\end{lem}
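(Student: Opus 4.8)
The plan is to exploit the fact that the classifying space for a non-trivial finite group cannot be finite-dimensional, together with \Cref{restricting-ecyc-model}, which lets us pass to a suitable subgroup where the relevant finite group appears as a stabilizer. Concretely, set $W = N_G(H)/H$. By hypothesis $W$ is finite and nontrivial. I would restrict a purported model $X$ for $E_{\CYC}(G)$ to the normalizer $N = N_G(H)$ to obtain, by \Cref{restricting-ecyc-model}, a model for $E_{\CYC}(N)$ of the same dimension. It therefore suffices to prove the statement for $G = N$, i.e. to show that when $H \trianglelefteq G$ is a maximal cyclic \emph{normal} subgroup with $1 \neq [G:H] < \infty$, every model for $E_{\CYC}(G)$ is infinite-dimensional.

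The key step is to analyze the fixed-point set $X^H$. Since $H \in \CYC$, the space $X^H$ is contractible. Because $H$ is normal in $G$, the quotient $W = G/H$ acts on $X^H$. I would argue that this $W$-action is free: a point $x \in X^H$ fixed by some $g \in G$ would have stabilizer $G_x$ containing both $H$ and $g$, and $G_x$ must be cyclic (as it lies in $\CYC$); but a cyclic group containing the maximal cyclic subgroup $H$ equals $H$ by maximality, forcing $g \in H$. Hence $W$ acts freely on the contractible finite-dimensional (were $X$ finite-dimensional) complex $X^H$. This would make $X^H$ a finite-dimensional model for $EW$, contradicting the cited fact \cite[VIII.2.5]{Brown1982} that $EW$ cannot be finite-dimensional for the nontrivial finite group $W$. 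The dimension of $X^H$ is at most that of $X$, so $X$ must be infinite-dimensional.

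The main obstacle I anticipate is verifying carefully that the $W$-action on $X^H$ is genuinely free as a $W$-CW-action, and that $X^H$ inherits the structure of a $W$-CW-complex of dimension bounded by $\dim X$. The freeness rests entirely on the maximality of $H$: one must check that the stabilizer in $G$ of any point of $X^H$ is a cyclic group containing $H$, hence equal to $H$, so that the induced stabilizer in $W = G/H$ is trivial. A small subtlety is that a priori the stabilizer $G_x$ is only known to lie in $\CYC$ and to contain $H$; I would note that since $H$ has finite index in $G$, any subgroup containing $H$ is finitely generated, and a cyclic overgroup of the maximal cyclic subgroup $H$ must coincide with $H$. Once freeness and the dimension bound are established, the contradiction with the infinite-dimensionality of $EW$ is immediate, and the general case follows from the normal case via the restriction to $N_G(H)$.
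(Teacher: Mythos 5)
Your proposal is correct and follows essentially the same argument as the paper: the paper also observes that $X^H$ is contractible, that maximality of $H$ forces every isotropy group of a point of $X^H$ to equal $H$, hence the Weyl group $N_G(H)/H$ acts freely on $X^H$, which would then be a finite-dimensional model for $EW$ with $W$ nontrivial finite --- a contradiction. The only cosmetic difference is that you first restrict the model to $N_G(H)$ via \Cref{restricting-ecyc-model}, whereas the paper works with the Weyl group action on $X^H$ directly; this changes nothing of substance.
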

\begin{proof}
Let $X$ be a model for $E_{\CYC}(G)$. Since $H$ is cyclic, the CW-complex $X^H$ is contractible. Observe that all isotropy groups of $X^H$ are equal to $H$ since $H$ was maximal cyclic. This implies that the Weyl group $N_G(H)/H$ of $H$ acts freely on $X^H$. Since $N_G(H)/H$ is non-trivial finite, $X^H$ has to be infinite-dimensional.
\end{proof}

\begin{prop} \label{finite-group-finite-dim-cyc-model}
Let $G$ be a finite group with a finite-dimensional model for $E_{\CYC}(G)$. Then $G$ is already cyclic.
\end{prop}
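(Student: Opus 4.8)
The plan is to show that a finite group $G$ with a finite-dimensional model for $E_{\CYC}(G)$ must be cyclic by producing, from the assumption that $G$ is \emph{not} cyclic, a finite subgroup acting freely on a contractible finite-dimensional complex, which is impossible. The key observation is \Cref{maximal-cyclic-ecyc-infinite-dim}: if $H \subgroup G$ is maximal cyclic with $[N_G(H):H]$ finite and strictly bigger than one, then no model for $E_{\CYC}(G)$ can be finite-dimensional. Since $G$ is finite, \emph{every} subgroup has finite index in its normalizer automatically, so the only hypothesis that needs checking is $N_G(H) \neq H$ for some maximal cyclic $H$.

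First I would pick a maximal cyclic subgroup $H$ of $G$; such a subgroup exists because $G$ is finite and the cyclic subgroups are partially ordered by inclusion. If $G$ itself is cyclic we are done, so assume $G$ is not cyclic. The goal is then to locate a maximal cyclic subgroup $H$ with $N_G(H) \supsetneq H$ and invoke \Cref{maximal-cyclic-ecyc-infinite-dim} to contradict finite-dimensionality. The natural candidate is to choose $H$ maximal cyclic of largest possible order among all cyclic subgroups of $G$, or more robustly to exploit normalizer-growth: in any finite non-cyclic group one can find a proper subgroup that is properly contained in its normalizer.

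The main obstacle is ruling out the degenerate possibility that a maximal cyclic subgroup $H$ is self-normalizing, i.e. $N_G(H) = H$; in that case \Cref{maximal-cyclic-ecyc-infinite-dim} does not apply directly. I would handle this as follows. If $G$ is a non-cyclic $p$-group, then every maximal subgroup is normal, and more to the point a maximal cyclic subgroup $H$ is either all of $G$ (contradicting non-cyclicity) or properly contained in a subgroup that normalizes it, since in a $p$-group proper subgroups never equal their normalizers ($N_G(H) \supsetneq H$ always holds for $H \subsetneq G$). For general finite $G$ I would reduce to a prime-power situation by choosing a prime $p$ dividing $|G|$ for which a Sylow $p$-subgroup $P$ is non-cyclic, if one exists: then working inside $P$ and using \Cref{restricting-ecyc-model} (restriction of a model for $E_{\CYC}(G)$ along $P \subgroup G$ gives a finite-dimensional model for $E_{\CYC}(P)$) reduces the claim to the $p$-group case. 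If instead every Sylow subgroup of $G$ is cyclic, then $G$ is metacyclic; here I would argue more carefully, using that a non-cyclic group with all Sylow subgroups cyclic still contains a maximal cyclic subgroup that fails to be self-normalizing, so that \Cref{maximal-cyclic-ecyc-infinite-dim} again applies after restricting to a suitable subgroup.

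In all cases the conclusion is the same: non-cyclicity of $G$ forces the existence of a maximal cyclic subgroup $H$ (possibly inside a restricted subgroup, via \Cref{restricting-ecyc-model}) with $1 \neq [N_G(H):H] < \infty$, and \Cref{maximal-cyclic-ecyc-infinite-dim} then yields that $E_{\CYC}(G)$ is infinite-dimensional, contrary to hypothesis. Hence $G$ must be cyclic. The step I expect to require the most care is the verification that non-cyclicity always produces a non-self-normalizing maximal cyclic subgroup; the reduction to Sylow subgroups via restriction is the cleanest route, and the residual metacyclic case is where I would spend the bulk of the argument.
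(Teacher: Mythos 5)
Your overall strategy is sound and genuinely different from the paper's: you split into cases according to whether some Sylow subgroup is non-cyclic (reducing to $p$-groups via \Cref{restricting-ecyc-model} and using that proper subgroups of $p$-groups are never self-normalizing) versus the case where all Sylow subgroups are cyclic. The first branch is complete and correct. The weak point is the residual branch: you assert that a non-cyclic group with all Sylow subgroups cyclic contains a maximal cyclic subgroup that is not self-normalizing, but you do not prove it, and you yourself flag it as where the bulk of the work lies. As written this is a gap. The assertion is true, and the quickest way to close it is the observation that such a group is metacyclic with cyclic commutator subgroup $[G,G]\neq G$; any maximal cyclic subgroup $H\supseteq [G,G]$ is then automatically normal, so $N_G(H)=G\supsetneq H$ and \Cref{maximal-cyclic-ecyc-infinite-dim} applies.

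It is worth comparing with the paper's argument, which avoids the Sylow case division entirely. There one first uses \Cref{restricting-ecyc-model} to reduce to a \emph{minimal non-cyclic} group (a non-cyclic subgroup of smallest order, all of whose proper subgroups are cyclic), invokes Miller--Moreno to conclude such a group is solvable, notes that the proper subgroup $[G,G]$ is therefore cyclic, and then runs exactly the normality trick above once: a maximal cyclic $H\supseteq[G,G]$ has $N_G(H)=G$, forcing $H=G$ by \Cref{maximal-cyclic-ecyc-infinite-dim}. Your approach trades the Miller--Moreno input for elementary Sylow theory plus the normalizer-growth property of $p$-groups, which is arguably more self-contained in the first branch, but you still end up needing the commutator-subgroup argument (or the classification of Z-groups) to finish the second branch -- so the paper's single reduction is the more economical route.
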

\begin{proof}
By \Cref{restricting-ecyc-model} we only need to consider minimal non-cyclic groups, i.e. finite groups such that every proper subgroup is cyclic. Thus by \cite{MillerMoreno1903} $G$ is solvable. Moreover, since the proper subgroup $[G,G]$ has to be cyclic, $G$ has derived length at most~$2$. Let $H$ be a maximal cyclic subgroup containing $[G,G]$, then $N_G(H)=G$. Then by \Cref{maximal-cyclic-ecyc-infinite-dim}, $H=G$ and hence $G$ is cyclic.
\end{proof}

\begin{prop}\label{vc-cy-fd-model}
Let $V$ be virtually cyclic. Then $E_{\CYC} V$ is finite-dimensional if and only if $V$ is cyclic.
\end{prop}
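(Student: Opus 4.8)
My plan is to prove the two implications separately, organizing the forward direction around the structure theorem \Cref{vcstru} and the restriction principle \Cref{restricting-ecyc-model}. The backward implication is immediate: if $V$ is cyclic then $V$ itself lies in $\CYC$, so a single point with trivial $V$-action is a $0$-dimensional model for $E_{\CYC}(V)$. For the forward direction I assume $E_{\CYC}(V)$ is finite-dimensional. If $V$ is finite, then \Cref{finite-group-finite-dim-cyc-model} already yields that $V$ is cyclic, so I may assume $V$ is infinite and write $F$ for the maximal normal finite subgroup supplied by \Cref{vcstru}.

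The first step is to exhibit a copy of $F \times \BZ$ inside $V$. In the orientable case $V \cong F \rtimes_{\varphi} \BZ$, and in the nonorientable case the preimage $V_0$ of the canonical infinite cyclic subgroup $\BZ \leq D_{\infty}$ is an index-two subgroup of this same form $F \rtimes_{\varphi} \BZ$ (the extension splits because $\BZ$ is free, and $F$ remains the relevant finite normal subgroup). Letting $d$ be the order of $\varphi$, the subgroup $F \times d\BZ$ is isomorphic to $W := F \times \BZ$ and has finite index. By \Cref{restricting-ecyc-model}, the restriction to $W$ of a finite-dimensional model for $E_{\CYC}(V)$ is a finite-dimensional model for $E_{\CYC}(W)$.

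The crux is then to analyze $W = F \times \BZ$. I will take $H := \{e\} \times \BZ$ and verify two facts by direct computation: that $H$ is a \emph{maximal} cyclic subgroup of $W$ (if $\langle(e,u)\rangle \subseteq \langle(f,u^k)\rangle$, comparing second coordinates forces $k = \pm 1$ and then $f = e$), and that $H$ is central, so that $N_W(H) = W$ and hence $[N_W(H):H] = |F|$; both computations survive a nonabelian $F$ since $H$ sits in the central $\BZ$-factor. If $F$ were nontrivial we would have $1 < [N_W(H):H] < \infty$, and \Cref{maximal-cyclic-ecyc-infinite-dim} would force $E_{\CYC}(W)$ to be infinite-dimensional, contradicting the previous step. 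Hence $F = 1$. The structure theorem then leaves only $V \cong \BZ$ (orientable), which is cyclic as desired, or $V \cong D_{\infty}$ (nonorientable), which is excluded because by \Cref{inf-dih-fintype} every model for $E_{\CYC}(D_{\infty})$ is infinite-dimensional.

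I expect the main obstacle to be isolating a maximal cyclic subgroup with finite, nontrivial Weyl group. In $V$ itself the maximal cyclic subgroups can be awkward to describe, especially when $F$ is nonabelian, so the decisive simplification is the restriction to $F \times \BZ$, where the central factor $\{e\} \times \BZ$ is transparently maximal cyclic with Weyl group $F$, making \Cref{maximal-cyclic-ecyc-infinite-dim} directly applicable.
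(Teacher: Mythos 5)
Your proof is correct and follows essentially the same route as the paper: reduce to the finite case via \Cref{finite-group-finite-dim-cyc-model}, pass to a finite-index copy of $F \times \BZ$ and restrict the classifying space, apply \Cref{maximal-cyclic-ecyc-infinite-dim} to the normal maximal cyclic subgroup $\{e\} \times \BZ$ to force $F = 1$, and exclude $D_{\infty}$ by \Cref{inf-dih-fintype}. The only difference is that you spell out the maximality and centrality computations explicitly, which the paper leaves implicit.
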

\begin{proof}
By \Cref{finite-group-finite-dim-cyc-model} we only need to prove the claim if $V$ is infinite. Suppose $V$ is orientable, i.e. $V \cong F \rtimes_{\varphi} \BZ$ for some finite group $F$ and some $\varphi \in \Aut(F)$ and assume that $E_{\CYC} V$ is finite-dimensional. If $n$ denotes the order of $\varphi$ then $F \times \BZ \cong F \rtimes_{\varphi} n \BZ$ and by \Cref{restricting-ecyc-model} also $F \times \BZ$ has a finite-dimensional classifying space. But $\BZ \subgroup F \times \BZ$ is a normal maximal cyclic subgroup, thus $F = 1$ by \Cref{maximal-cyclic-ecyc-infinite-dim}.

Now, suppose $V$ was non-orientable having a finite-dimensional model for $E_{\CYC}V$. Let $F$ be the maximal normal finite subgroup of $V$, then $F \rtimes \BZ$ is a subgroup of $V$. By the above, it follows that $F = 1$, so $V$ is infinite dihedral. But this is impossible by \Cref{inf-dih-fintype}.
\end{proof}

\begin{cor}
A virtually cyclic group $V$ has  a finite or finite-dimensional model for $E_{\CYC}V$ if and only if $V$ is cyclic.
\end{cor}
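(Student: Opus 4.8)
The plan is to deduce this corollary directly from \Cref{vc-cy-fd-model}, since the finite-dimensional case is already completely settled there; the role of this corollary is only to absorb the \emph{finite} case into the \emph{finite-dimensional} one.

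First I would dispatch the converse. If $V$ is cyclic then every subgroup of $V$ is cyclic, so $\CYC$ is exactly the family of \emph{all} subgroups of $V$. Consequently the one-point $V$-CW-complex is a model for $E_{\CYC}V$: its fixed point sets $\{*\}^H = \{*\}$ are contractible for every subgroup $H \in \CYC$, and there are no subgroups outside $\CYC$ for which the emptiness condition would have to be checked. A single point is simultaneously finite and finite-dimensional, which gives the ``if'' direction.

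For the forward direction, the key observation is that a \emph{finite} $G$-CW-complex is automatically finite-dimensional: having only finitely many orbits of cells, it has cells in only finitely many dimensions. Thus under either hypothesis---$V$ admits a finite model, or $V$ admits a finite-dimensional model---the group $V$ possesses a finite-dimensional model for $E_{\CYC}V$. Now \Cref{vc-cy-fd-model} applies verbatim and forces $V$ to be cyclic.

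No genuine obstacle remains at this stage: all of the substance has already been carried out in \Cref{vc-cy-fd-model} (and, through it, in \Cref{maximal-cyclic-ecyc-infinite-dim}, \Cref{finite-group-finite-dim-cyc-model}, and the infinite-dihedral computation of \Cref{inf-dih-fintype}). If one instead wished to route the finite-model hypothesis through finite \emph{type}, one could invoke \Cref{vc-finite-type} to cut the possibilities down to finite, infinite cyclic, or infinite dihedral, and then use finite-dimensionality to eliminate the noncyclic finite groups and the infinite dihedral group; but this is a strictly longer path, and collapsing ``finite'' into ``finite-dimensional'' is the cleaner route.
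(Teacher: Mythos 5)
Your proposal is correct and follows essentially the same route as the paper, which states this corollary without a separate proof precisely because it is an immediate consequence of \Cref{vc-cy-fd-model}: a finite $G$-CW-complex is finite-dimensional, and a point serves as a finite, finite-dimensional model for $E_{\CYC}V$ when $V$ is cyclic. Nothing is missing.
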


From \Cref{vc-cy-fd-model} we immediately obtain the following
\begin{observation}\label{cyc-vc-fd-model}
If $G$ is a group having a finite-dimensional model for $E_{\CYC}(G)$, then there is a finite-dimensional model for $\uu E G$.
Conversely, suppose that $G$ is a group having a finite-dimensional model for $\uu E G$. Then $G$ admits a finite-dimensional model for $E_{\CYC}(G)$ if and only if $\CYC(G) = \VCYC(G)$.
\end{observation}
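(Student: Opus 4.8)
The plan is to deduce everything from one key reduction: a finite-dimensional model for $E_{\CYC}(G)$ already forces $\CYC(G) = \VCYC(G)$. To establish this, I would start with a model $X$ for $E_{\CYC}(G)$ of dimension $d < \infty$ and let $V \leq G$ be an arbitrary virtually cyclic subgroup. By \Cref{restricting-ecyc-model} the restriction $\res_V^G X$ is a model for $E_{\CYC}(V)$, and since restriction leaves the underlying CW-complex unchanged it still has dimension at most $d$. Hence $E_{\CYC}(V)$ admits a finite-dimensional model, so \Cref{vc-cy-fd-model} forces $V$ to be cyclic. As this applies to every virtually cyclic $V$, and cyclic subgroups are trivially virtually cyclic, I conclude $\CYC(G) = \VCYC(G)$ as families of subgroups.

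The second, purely formal ingredient is that a classifying space depends only on its family: if $\CYC(G) = \VCYC(G)$, the defining fixed-point conditions for the two families are literally identical, so any model for $E_{\CYC}(G)$ is simultaneously a model for $\uu E G = E_{\VCYC}(G)$, and conversely. With these two facts the statement follows formally. For the first assertion, a finite-dimensional $E_{\CYC}(G)$ yields $\CYC(G) = \VCYC(G)$ by the key reduction, so $X$ itself is a finite-dimensional model for $\uu E G$. For the converse, assume $\uu E G$ has a finite-dimensional model: the ``only if'' direction is again the key reduction (and does not even use the hypothesis on $\uu E G$), while the ``if'' direction uses the family-coincidence remark, since when $\CYC(G) = \VCYC(G)$ the given finite-dimensional model for $\uu E G$ is also a model for $E_{\CYC}(G)$.

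I do not expect a genuine obstacle, as all the content is packaged into \Cref{vc-cy-fd-model} and the restriction property \Cref{restricting-ecyc-model}. The only points needing a moment's care are verifying that restriction preserves finite-dimensionality, and noting that the coincidence $\CYC(G) = \VCYC(G)$ in particular forces every finite subgroup (which is always virtually cyclic but need not be cyclic) to be cyclic---this is indeed covered by \Cref{vc-cy-fd-model} via \Cref{finite-group-finite-dim-cyc-model}.
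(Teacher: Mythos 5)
Your argument is correct and is exactly the deduction the paper intends: the paper gives no written proof beyond ``From \Cref{vc-cy-fd-model} we immediately obtain the following,'' and your key reduction (restrict a finite-dimensional $E_{\CYC}(G)$ to any virtually cyclic $V$ via \Cref{restricting-ecyc-model}, apply \Cref{vc-cy-fd-model} to conclude $V$ is cyclic, then use that equal families give equal classifying spaces) is precisely that omitted argument. No gaps.
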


Obviously the condition $\CYC(G) = \VCYC(G)$ holds whenever the group $G$ is torsion-free. However, this is not necessary, even for virtually free groups. For example, groups of the form $G = \ast_{i = 1}^n \BZ/n_i\BZ$ where $n_i \geq 0$ admit a finite-dimensional model for $E_{\CYC}(G)$ if and only if $n_i \neq 2$ for all $i$ or $G \cong \BZ/2$ by the Kurosh subgroup theorem.

\begin{lem}
Let $A$ be an abelian group with $E_{\CYC}(A)$ finite-dimensional. Then $A$ is cyclic, torsion-free or locally finite cyclic.
\end{lem}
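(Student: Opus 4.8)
The plan is to reduce the statement to the structural condition $\CYC(A) = \VCYC(A)$ and then analyse abelian groups that satisfy it. First I would invoke \Cref{cyc-vc-fd-model}: since $A$ has a finite-dimensional model for $E_{\CYC}(A)$, it also has a finite-dimensional model for $\uueg$ (with $G = A$); feeding this back into the same observation, the ``if and only if'' forces $\CYC(A) = \VCYC(A)$. In other words, every virtually cyclic subgroup of $A$ is in fact cyclic, and this is the only input I will use from the finite-dimensionality hypothesis.

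Next I would extract a torsion / torsion-free dichotomy from this condition. Suppose $A$ contained both an element $a$ of infinite order and a nontrivial torsion element $b$, say of order $n \geq 2$. Since $\langle a \rangle \cong \BZ$ is torsion-free, $\langle a \rangle \cap \langle b \rangle = 0$, so in the abelian group $A$ the subgroup $\langle a, b \rangle$ splits as $\langle a \rangle \oplus \langle b \rangle \cong \BZ \oplus \BZ/n$. This group is virtually cyclic, as it contains $\langle a \rangle$ with finite index $n$, but it is not cyclic, contradicting $\CYC(A) = \VCYC(A)$. Hence $A$ admits no such pair, i.e. $A$ is either torsion-free or a torsion group.

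In the torsion-free case we land directly in one of the stated alternatives, so nothing further is required. In the torsion case I would argue that $A$ is locally finite cyclic: every finite subgroup $F \leq A$ is in particular virtually cyclic, so $\CYC(A) = \VCYC(A)$ forces $F$ to be cyclic. Since a finitely generated torsion abelian group is finite, every finitely generated subgroup of $A$ is finite and cyclic, and therefore $A$ is locally finite cyclic. This exhausts the three cases (finite cyclic groups being absorbed into the last alternative and $\BZ$ into the torsion-free one).

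I do not expect a serious obstacle here: once \Cref{cyc-vc-fd-model} is available the argument is short, and the only genuine points are (i) checking that $\langle a, b\rangle$ really is an internal direct sum, so that it is a bona fide non-cyclic virtually cyclic subgroup, and (ii) remembering that finite groups themselves are virtually cyclic, which is what upgrades the torsion conclusion from merely ``torsion'' to the sharper ``locally finite cyclic.''
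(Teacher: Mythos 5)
Your proof is correct and follows essentially the same route as the paper: both arguments reduce to the fact that every virtually cyclic subgroup of $A$ must be cyclic (you extract this from \Cref{cyc-vc-fd-model}, the paper uses \Cref{restricting-ecyc-model} together with \Cref{finite-group-finite-dim-cyc-model} and the observation that $\BZ \times F \subgroup A$ forces $F=1$), and then both rule out the coexistence of an infinite-order element with torsion before handling the pure torsion case via cyclicity of finite subgroups. Your direct-sum subgroup $\langle a\rangle \oplus \langle b\rangle \cong \BZ \oplus \BZ/n$ is just a slightly more economical packaging of the paper's appeal to the structure theorem for finitely generated abelian groups; no gap.
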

\begin{proof} 
By \Cref{finite-group-finite-dim-cyc-model} we can assume in the following that $A$ is infinite. If $A$ is finitely generated, we can write $A \cong \BZ^n \times F$ with $F$ finite abelian and $n \geq 1$. In particular, $A$ contains $\BZ \times F$, so $F = 1$, i.e. $A$ is torsion-free.
More generally, if $A$ contains an element of infinite order $x$, then any finite set $\{ y_1, \ldots, y_n \} \subset A$ together with the element $x$ will generate an infinite abelian subgroup, which must be torsion-free by the previous observation. The only case that remains is $A$ being an infinite torsion group. But since any finite subgroup has to be cyclic, it follows that $A$ is locally finite cyclic.
\end{proof}

For example, the Pr\"ufer $p$-group $P$ is a countable locally cyclic infinite abelian $p$-group. By \cite[Theorem 4.3]{LuWe12} it follows that the minimal dimension of a model for $E_{\CYC}(P)$ equals one.

\begin{lem}
Let $G$ be elementary amenable and suppose that there is a finite-dimensional model for $E_{\CYC}(G)$. Then $G$ is virtually solvable.
\end{lem}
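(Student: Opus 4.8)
The plan is to deduce the conclusion from two structural consequences of the finite-dimensionality hypothesis: a restriction on the finite subgroups that controls the torsion, and a cohomological bound that forces $G$ to have finite Hirsch length. Once both are available, the Hillman--Linnell structure theorem \cite{HiLi} reduces the claim to a routine extension argument.

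First I would record the constraint on finite subgroups. If $X$ is a finite-dimensional model for $E_{\CYC}(G)$ and $F \leq G$ is any finite subgroup, then by \Cref{restricting-ecyc-model} the restriction $\res_F^G X$ is a finite-dimensional model for $E_{\CYC}(F)$, so $F$ is cyclic by \Cref{finite-group-finite-dim-cyc-model}. Hence \emph{every} finite subgroup of $G$ is cyclic. In particular, any locally finite subgroup $L \leq G$ is locally cyclic: any two elements of $L$ generate a finite, hence cyclic, hence abelian subgroup, so $L$ is abelian. This is exactly the input needed to exclude locally finite but highly non-solvable behaviour (such as $\bigoplus_n A_n$, which is elementary amenable and of Hirsch length zero but contains non-cyclic finite subgroups).

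Next I would establish that $G$ has finite Hirsch length. By \Cref{cyc-vc-fd-model} the finite-dimensional model for $E_{\CYC}(G)$ yields a finite-dimensional model for $E_{\VCYC}(G)$. For an elementary amenable group, admitting a finite-dimensional model for $E_{\VCYC}(G)$ forces finite Hirsch length; this is the finite-dimensional counterpart of the finite-type analysis used for \ref{thm:ecyc-g-conjecture}, and it follows from the study of the geometric dimension of elementary amenable groups for the family of virtually cyclic subgroups \cite{DegrijsePetrosyan2013}. I expect this to be the main obstacle: unlike the finite-type setting handled via \cite{Kro93}, finite-dimensionality permits unbounded torsion (the Pr\"ufer group has a one-dimensional model yet unbounded finite subgroups), so the Hirsch-length bound must be extracted from the dimension alone rather than from an $FP_{\infty}$-type hypothesis.

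Finally, granting $h(G) < \infty$, the Hillman--Linnell theorem \cite{HiLi} presents $G$ as an extension $1 \to L \to G \to Q \to 1$ with $L$ locally finite and $Q$ virtually solvable. By the first step $L$ is abelian, so $G$ is abelian-by-(virtually solvable). Passing to the preimage in $G$ of a finite-index solvable subgroup of $Q$ produces a finite-index subgroup that is an extension of a solvable group by the abelian group $L$, hence solvable; therefore $G$ is virtually solvable, as claimed.
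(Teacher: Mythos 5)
Your proof has the same overall skeleton as the paper's: (i) every finite subgroup of $G$ is cyclic by \Cref{restricting-ecyc-model} and \Cref{finite-group-finite-dim-cyc-model}, hence every locally finite subgroup is abelian; (ii) $G$ has finite Hirsch length; (iii) conclude via the Hillman--Linnell structure theorem together with the fact that abelian-by-solvable groups are solvable. Steps (i) and (iii) are carried out exactly as in the paper. The one divergence is step (ii), which you rightly flag as the crux but then leave to an imprecise citation: you pass to $\uueg$ via \Cref{cyc-vc-fd-model} and appeal to Degrijse--Petrosyan for ``finite-dimensional $\uueg$ implies finite Hirsch length,'' which is not literally what that reference proves (it goes the other way, constructing finite-dimensional models when the Hirsch length is finite). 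The paper instead descends to the family $\FCYC$ of finite cyclic subgroups --- a finite-dimensional model for $E_{\FCYC}(G)$ exists by the transitivity principle \cite[Proposition 5.1]{LuWe12}, since every cyclic group admits an at most one-dimensional model for the subfamily --- and then bounds the Hirsch length directly via $\operatorname{h}(G) \le \cd_{\BQ}(G) \le \operatorname{gd}_{\FCYC}(G) < \infty$, the second inequality holding because $\BQ[G/F]$ is $\BQ G$-projective for $F$ finite, so the cellular chain complex of $E_{\FCYC}(G)$ is a finite-length projective resolution of $\BQ$. Your route can be closed by the same device (finite-dimensional $\uueg$ yields a finite-dimensional $\underline{E}G$ by transitivity, hence finite $\cd_{\BQ}(G)$, hence finite Hirsch length), but as written the citation is carrying the only genuinely non-formal step; the $\cd_{\BQ}$ argument is the self-contained way to finish.
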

\begin{proof}
Since $E_{\CYC}(G)$ has a finite-dimensional model, so does $E_{\FCYC}(G)$ by \cite[Proposition 5.1]{LuWe12}, where $\FCYC$ denotes the family of finite cyclic subgroups. It follows that the Hirsch length $\operatorname{h}(G)$ of $G$ is finite, since $\operatorname{h}(G) \leq \cd_{\BQ}(G) \leq \operatorname{gd}_{\FCYC}(G) < \infty$. The first inequality follows from \cite[Lemma 2]{Hillman1991}. For the second inequality note that $\BQ[G/F]$ is a projective $\BQ G$-module for $F$ finite \cite[I.8 Ex. 4]{Brown1982} and thus the cellular chain complex of $E_{\FCYC}(G)$ yields a projective resolution of $\BQ$ over $\BQ G$. Moreover, note that any locally finite subgroup $H$ of $G$ has to be locally cyclic, in particular $H$ is abelian. Combined with the structure theorem of elementary amenable groups of finite Hirsch length \cite{HiLi}, it follows that $G$ is virtually solvable.
\end{proof}

\begin{lem}
A hyperbolic group $G$ has a finite-dimensional classifying space $E_{\CYC}(G)$ if and only if it does not contain the infinite dihedral group as a subgroup and the normalizers of all non-trivial finite subgroups are finite cyclic. Moreover, if a subgroup $K$ is maximal finite, then $N_G(K) = K$.
\end{lem}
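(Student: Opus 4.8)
The plan is to reduce everything to a single family-theoretic statement. By \Cref{cyc-vc-fd-model}, a group with a finite-dimensional model for $\uu E G$ admits a finite-dimensional model for $E_{\CYC}(G)$ if and only if $\CYC(G) = \VCYC(G)$; and a hyperbolic group always has a finite-dimensional model for $\uu E G$ by \cite{JL}. Hence, for hyperbolic $G$, the existence of a finite-dimensional model for $E_{\CYC}(G)$ is equivalent to the condition that every virtually cyclic subgroup of $G$ is cyclic. So the whole lemma follows once I prove the equivalence: \emph{every virtually cyclic subgroup of $G$ is cyclic $\iff$ $G$ contains no copy of $D_\infty$ (A) and the normalizer of every non-trivial finite subgroup is finite cyclic (B)}, together with the moreover clause.

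First I would prove the direction (A)$+$(B)$\Rightarrow$ every virtually cyclic subgroup is cyclic. A non-trivial finite subgroup $F'$ satisfies $F' \leq N_G(F')$, which is finite cyclic by (B), so $F'$ is cyclic; thus all finite subgroups are cyclic. Now let $V \leq G$ be an infinite virtually cyclic subgroup and let $F$ be its maximal normal finite subgroup as in \Cref{vcstru}. If $F \neq 1$ then $F \trianglelefteq V$ forces the infinite group $V$ into $N_G(F)$, contradicting the finiteness of $N_G(F)$ guaranteed by (B); hence $F = 1$, so by \Cref{vcstru} either $V \cong \BZ$ or $V \cong D_\infty$. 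The latter is excluded by (A), so $V \cong \BZ$ is cyclic.

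Conversely, assuming every virtually cyclic subgroup is cyclic, condition (A) is immediate since $D_\infty$ is virtually cyclic but not cyclic. For (B), let $F$ be a non-trivial finite subgroup and suppose $N_G(F)$ were infinite. Since $[N_G(F):C_G(F)] \leq |\Aut(F)| < \infty$, the centralizer $C_G(F)$ would be infinite and hence would contain an element $h$ of infinite order; then $\langle F, h\rangle = F \times \langle h\rangle \cong F \times \BZ$ is a virtually cyclic subgroup that is not cyclic (it has non-trivial torsion and is infinite), a contradiction. Therefore $N_G(F)$ is finite, and being a finite subgroup it is cyclic, so $N_G(F)$ is finite cyclic. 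The moreover clause now follows: if $K$ is a non-trivial maximal finite subgroup, then $K \leq N_G(K)$ with $N_G(K)$ finite by (B), so maximality gives $N_G(K) = K$.

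The only input that is not pure bookkeeping with \Cref{vcstru}, \Cref{restricting-ecyc-model}, and \Cref{vc-cy-fd-model} is the passage \emph{$C_G(F)$ infinite $\Rightarrow$ $C_G(F)$ contains an element of infinite order}, i.e.\ the standard fact that hyperbolic groups contain no infinite torsion subgroup; this is the one place where hyperbolicity is genuinely used in the forward proof of (B), and I expect it to be the main point requiring care. I would also flag the degenerate case $K = 1$ in the moreover clause: since $N_G(1) = G$, the statement is understood to concern \emph{non-trivial} maximal finite subgroups (equivalently, it has content only when $G$ has torsion).
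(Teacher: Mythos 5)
Your proof is correct and follows essentially the same route as the paper's: reduce via \Cref{cyc-vc-fd-model} to the condition $\CYC(G)=\VCYC(G)$, then translate that into conditions (A) and (B) using \Cref{vcstru} together with the fact that an infinite subgroup of a hyperbolic group contains an element of infinite order. The only cosmetic differences are that the paper forms $\langle g,K\rangle\leq N_G(K)$ directly (a finite-by-cyclic, hence virtually cyclic, non-cyclic group) instead of passing to the centralizer via the $N/C$ theorem, and it deduces the self-normalizing claim from \Cref{maximal-cyclic-ecyc-infinite-dim} rather than from your simpler observation that $K\leq N_G(K)$ with $N_G(K)$ finite forces equality by maximality.
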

\begin{proof}
Note that a hyperbolic group $G$ has a finite-dimensional model for $\uueg$ \cite[Remark 7 + Proposition 8]{JL}. Hence by \Cref{cyc-vc-fd-model}, $G$ has a finite-dimensional classifying space $E_{\CYC}(G)$ if and only if $\VCYC(G) = \CYC(G)$.

Suppose first that $E_{\CYC}(G)$ is finite-dimensional, which is equivalent to saying that $\VCYC(G) = \CYC(G)$. In particular, the infinite dihedral group does not appear as a subgroup. Now let $K$ be a non-trivial finite subgroup of $G$. If $N_G(K)$ was infinite, it would contain an element $g \in N_G(K)$ of infinite order. But then $\langle g, K \rangle \subgroup N_G(K)$ would be an infinite virtually cyclic group that is not cyclic. Thus $N_G(K)$ is finite and thus also cyclic. The claim about maximal finite subgroups being self-normalizing follows from \Cref{maximal-cyclic-ecyc-infinite-dim}.

For the converse, let $V \subgroup G$ be a non-trivial virtually cyclic subgroup. If $V$ is finite, it is cyclic, being the subgroup of its finite cyclic normalizer. Now suppose that $V$ is infinite and let $K$ denote its unique maximal normal finite subgroup. Since $K$ is normal in $V$, $N_G(K)$ is infinite. It follows that $K$ must be trivial. Since $V$ cannot be the infinite dihedral group, it follows that $V$ is infinite cyclic.
\end{proof}

Note that in the proof we have only used hyperbolicity of $G$ to conclude that it has a finite-dimensional model for $\uueg$ and any infinite subgroup of it contains an element of infinite order.

\begin{prop}
Let $G$ be a finitely generated linear group over a field of characteristic zero. If $G$ has a finite-dimensional model for $E_{\CYC}(G)$, then the virtual cohomological dimension of $G$ is finite. More precisely,
$$
\operatorname{vcd}(G) \leq \operatorname{gd}_{\CYC}(G) + 1.
$$
\end{prop}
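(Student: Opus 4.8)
The plan is to pass to a torsion-free finite index subgroup, where the family $\CYC$ consists only of the trivial and the infinite cyclic subgroups, and then to invoke the transitivity principle to descend from $\CYC$ to the trivial family, losing only one dimension. First I would apply Selberg's Lemma: a finitely generated linear group over a field of characteristic zero is virtually torsion-free, so $\operatorname{vcd}(G)$ is defined and we may fix a torsion-free subgroup $G' \leq G$ of finite index with $\operatorname{vcd}(G) = \cd(G')$.

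Next I would control $\operatorname{gd}_{\CYC}(G')$. By \Cref{restricting-ecyc-model}, the restriction $\res_{G'}^G X$ of any model $X$ for $E_{\CYC}(G)$ is a model for $E_{\CYC}(G')$, and restriction leaves the dimension of the underlying CW-complex unchanged. Choosing $X$ of minimal dimension therefore gives $\operatorname{gd}_{\CYC}(G') \leq \operatorname{gd}_{\CYC}(G) < \infty$.

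The key step is the transitivity principle \cite[Proposition 5.1]{LuWe12}, applied to the inclusion of families $\{1\} \subseteq \CYC$ of subgroups of $G'$. Since $G'$ is torsion-free, every $H \in \CYC(G')$ is either trivial or infinite cyclic, and in both cases $H$ admits a model for the classifying space of its trivial family of dimension at most one (a point, respectively the real line $\BR$ with its free $\BZ$-action). The transitivity principle then yields $\operatorname{gd}(G') \leq \operatorname{gd}_{\CYC}(G') + 1$, where $\operatorname{gd}(G')$ denotes the geometric dimension of $G'$, i.e. the minimal dimension of a free contractible $G'$-CW-complex.

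Assembling these estimates and using the standard inequality $\cd(G') \leq \operatorname{gd}(G')$, I obtain
$$
\operatorname{vcd}(G) = \cd(G') \leq \operatorname{gd}(G') \leq \operatorname{gd}_{\CYC}(G') + 1 \leq \operatorname{gd}_{\CYC}(G) + 1,
$$
as claimed; in particular $\operatorname{vcd}(G) < \infty$. The only substantial ingredient is the transitivity principle, whose hypothesis here reduces to the elementary observation that the infinite cyclic group has geometric dimension one; the characteristic-zero assumption enters exclusively via Selberg's Lemma to supply the torsion-free finite index subgroup. I expect no serious obstacle beyond correctly matching the direction of the transitivity principle (from the larger family $\CYC$ down to the trivial family) and checking that the relevant relative dimension is uniformly bounded by one.
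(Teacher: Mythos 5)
Your proposal is correct and follows essentially the same route as the paper: pass to a torsion-free finite index subgroup via Selberg's Lemma, restrict the model for $E_{\CYC}(G)$ to it, and apply the transitivity principle of \cite[Proposition 5.1]{LuWe12} using the fact that $E\BZ$ has a one-dimensional model. The paper's proof is just a more condensed version of the same argument.
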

\begin{proof}
Let $n = \operatorname{gd}_{\CYC}(G)$ be the minimal dimension of a model for $E_{\CYC}(G)$. Moreover let $H \subgroup G$ be a torsion-free subgroup of finite index and note that $E_{\CYC}(G)$ is also a model for $E_{\CYC}(H)$ via restriction. By \cite[Proposition 5.1]{LuWe12} there exists a model for $EH$ of dimension $n + 1$, since $E\BZ$ has a one-dimensional model.
\end{proof}

Linear groups of finite virtual cohomological dimension have been characterized by Aplerin and Shalen in terms of the Hirsch lengths of their finitely generated unipotent subgroups \cite{AlperinShalen1982}.

\section{Groups with Wild Inner Automorphisms and BVC}
\label{fg-group-BVC}

As noted in the introduction, essentially all proofs of \ref{ConjA} rely on understanding the BVC property for manageable classes of groups. In this section we want to construct groups outside of these realms that exhibit wild behaviour with respect to the BVC property. For constructing finitely generated examples we will make use of small cancellation theory over relatively hyperbolic groups as developed in \cite{Os} and \cite{HuOs}. In the following we content ourselves with setting up some notation that is being used later on without repeating the definition of relatively hyperbolic groups \cite{Osin2006}. Note that in this section we will use the convention that $g^w = w^{-1} g w$.

If $G$ is a group that is hyperbolic relative to a family $\{H_\lambda\}_{\lambda \in \Lambda}$ of subgroups, we call an element $g \in G$ parabolic, if it is conjugate to an element lying in one of the parabolic subgroups $H_\lambda$. Non-parabolic elements of infinite order are called loxodromic. For $g \in G$ a loxodromic element, there exists a unique maximal virtually cyclic subgroup $E_G(g)$ that contains $g$. It is given by
$$
E_{G}(g) = \{ h \in G \mid \exists~ m \in \BN \setminus \{0\} \text{ such that } h^{-1} g^m h = g^{\pm m} \}.
$$

Recall that two elements $a, b \in G$ are called \emph{commensurable} if there exists $k,l \in \BZ \setminus \{ 0 \}$ such that $a^k$ is conjugate to $b^l$. A subgroup $S$ of $G$ is called  suitable if there exist two loxodromic elements $s_1, s_2 \in S$ that are not commensurable such that $E_G(s_1) \cap E_G(s_2) = 1$.

Given a group $H$ and an isomorphism $\theta \colon A \rightarrow B$ between two subgroups $A$ and $B$ of $H$, we can define a new group $H\ast_{\theta} = H\ast_{A^t =B}$, called the HNN extension of $H$ along $\theta$, by the presentation $\langle H,t \mid t^{-1}xt =\theta(x), x\in A \rangle $. The letter $t$ is called stable letter.

A sequence $g_0 , t^{\epsilon_1} , g_1 , \ldots, t^{\epsilon_n}, g_n$ of elements with $g_i \in H$ and $\epsilon_i \in \{-1,+1\}$ is said to be \emph{reduced} if there is no pinch, where we define a pinch to be a consecutive sequence $t, g_i, t^{-1}$ with $g_i \in B$ or $t^{-1}, g_j,t$ with $g_j \in A$. Britton's Lemma states that if the sequence $g_0 , t^{\epsilon_1} , g_1 , \ldots, t^{\epsilon_n} , g_n $ is reduced and $n\geq 1$, then $g_0  t^{\epsilon_1}  g_1  \cdots t^{\epsilon_n}  g_n \neq 1$ in $H\ast_{\theta}$. The number of occurrences of $t$ or $t^{-1}$ in a reduced expression for an element $g \in H\ast_{\theta}$ is called its $t$-length, it is independent of the choice of reduced word.

We will make use of the following two lemmas repeatedly. For the convenience of the reader, we recall their statements:

\begin{lem}[{\cite[Theorem 1.4]{Osin2006}}] \label{malnormal-parabolics} Let $G$ be a group hyperbolic relative to a collection of subgroups $\{ H_\lambda \}_{\lambda \in \Lambda}$. Then for every $\lambda \in \Lambda$ and $g \in G \setminus H_{\lambda}$ the intersection $H_{\lambda} \cap H_{\lambda}^g$ is finite.
\end{lem}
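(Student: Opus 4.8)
The assertion is exactly that the peripheral subgroups are almost malnormal, so the plan is to exhibit a geometric object on which $G$ acts and on which each $H_\lambda$ occurs as the stabilizer of a single vertex, in such a way that distinct cosets of $H_\lambda$ correspond to distinct vertices. The cleanest route is to pass from the metric/isoperimetric definition of relative hyperbolicity \cite{Osin2006} to Bowditch's combinatorial description: $G$ acts on a connected, fine, $\delta$-hyperbolic graph $K$ with finitely many $G$-orbits of edges, all edge stabilizers finite, and with the property that the conjugates of the $H_\lambda$ are precisely the stabilizers of the vertices of infinite valence. Granting this, fix $\lambda$ and a vertex $v$ with $\operatorname{Stab}_G(v) = H_\lambda$.

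Now take $g \in G \setminus H_\lambda$. Since $H_\lambda$ is the \emph{full} stabilizer of $v$, we have $g^{-1}v \neq v$; write $w := g^{-1}v$. Using the convention $H_\lambda^{g} = g^{-1}H_\lambda g$, the conjugate is $H_\lambda^{g} = \operatorname{Stab}_G(w)$, so
\[
H_\lambda \cap H_\lambda^{g} \;=\; \operatorname{Stab}_G(v) \cap \operatorname{Stab}_G(w),
\]
the stabilizer of the pair of distinct vertices $v \neq w$. It therefore suffices to show that in a fine graph the stabilizer of two distinct vertices is finite. Let $P$ be the set of geodesic edge-paths from $v$ to $w$. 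A geodesic is an embedded arc of length $d_K(v,w)$, and fineness guarantees that only finitely many arcs of any bounded length join two given vertices, so $P$ is finite. The group $\operatorname{Stab}_G(v)\cap\operatorname{Stab}_G(w)$ permutes $P$; the kernel of this action fixes some geodesic setwise and, since it fixes the endpoints, fixes it pointwise, hence lies in the (finite) stabilizer of its first edge. Being an extension of a finite group by a finite group, $\operatorname{Stab}_G(v)\cap\operatorname{Stab}_G(w)$ is finite, which is the claim.

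I expect the main obstacle to be the first step, producing the fine hyperbolic graph: this is exactly the equivalence between Osin's definition of relative hyperbolicity and Bowditch's, which is a theorem in its own right that I would invoke rather than reprove. If one prefers to stay inside Osin's framework, the same conclusion can be reached intrinsically in the relative Cayley graph $\Gamma = \Gamma(G, X \sqcup \mathcal H)$ with $\mathcal H = \bigsqcup_{\lambda}(H_\lambda \setminus \{1\})$: for $f \in H_\lambda \cap H_\lambda^{g}$ set $h := g f g^{-1} \in H_\lambda$ and consider, for each $n$, the geodesic quadrilateral with corners $1$, $h^{n}$, $h^{n}g = g f^{n}$, $g$. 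Two opposite sides are the single $H_\lambda$-edges carrying $h^{n}$ and $f^{-n}$, running respectively through the cosets $H_\lambda$ and $gH_\lambda$, while the remaining two sides represent $g^{\pm 1}$ and have length bounded independently of $n$. Since $g \notin H_\lambda$ these two cosets are distinct, so (for large $n$, after the routine check that they are not connected to components of the short sides) the two long components are isolated. Osin's penetration estimate \cite{Osin2006} then bounds the $H_\lambda$-length of the component from $1$ to $h^{n}$ by a constant depending only on $|g|_{X}$, uniformly in $n$; hence $\{\,g f g^{-1} \mid f \in H_\lambda \cap H_\lambda^{g}\,\}$ lies in a fixed bounded region of $H_\lambda$, and properness of that region forces the intersection to be finite. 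In this second route the crux is precisely the isoperimetric lemma bounding the total penetration of isolated components of a cycle, which is the technical engine behind relative hyperbolicity.
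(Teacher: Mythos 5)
The paper does not prove this lemma at all: it is quoted verbatim from Osin's memoir (\cite[Theorem 1.4]{Osin2006}) and used as a black box, so there is no in-paper argument to compare yours against. Judged on its own terms, your sketch is essentially sound, and your second route is in fact a reconstruction of Osin's own proof: one forms the cycle $1,\ h=gfg^{-1},\ gf,\ g$ in $\Gamma(G, X\sqcup\mathcal H)$, observes that the two $H_\lambda$-edges travel through the distinct cosets $H_\lambda$ and $gH_\lambda$, and applies the bound on isolated components (Osin's Lemma~2.27, with the finite set $\Omega$ and the relative metric) to confine $gfg^{-1}$ to a ball of radius $K(2+2|g|_{X\sqcup\mathcal H})$ independent of $f$, whence the intersection is finite; note that the passage to powers $h^n$ is unnecessary, since the $n=1$ estimate already applies uniformly to every $f$ in the intersection. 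The one step you wave at --- ruling out the possibility that the $H_\lambda$-edge at $1$ is connected to an $H_\lambda$-component of one of the two sides labelled by $g^{\pm1}$ --- is the genuinely technical part of Osin's argument (handled there by a pigeonhole on the finitely many vertices of those fixed sides as $f$ ranges over an infinite intersection), so it should not be dismissed as routine, but it is not a conceptual gap. Your first route via Bowditch's fine-graph characterization is a legitimate alternative and the two-vertex-stabilizer argument in a fine graph is correct; its cost is that you must invoke the equivalence of Osin's and Bowditch's definitions, which is itself a substantial theorem, is usually stated for finitely generated groups with finitely many infinite peripheral subgroups (finite $H_\lambda$ make the lemma vacuous anyway), and is arguably heavier machinery than the statement being proved. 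For the purposes of this paper either route is overkill: the correct move is exactly what the authors do, namely cite Osin.
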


In particular, if $G$ is torsion-free and $f,g \in H_{\lambda}$ are not conjugate in $H_{\lambda}$, then they are not conjugate in $G$.

\begin{lem}[{\cite[Lemma 2.14]{HuOs}}] \label{hnn-conjugacy-primitivity} Let $G$ be a group and $A, B$ two isomorphic subgroups. Let $f \in G$ be an element that is not conjugate to any element of $A \cup B$. Then in the HNN extension $G*_{A^t = B}$
\begin{enumerate}
    \item $f$ is conjugate to some $g \in G$ in $G*_{A^t = B}$ if and only if $f$ and $g$ are conjugate in $G$.
    \item If $f$ is primitive in $G$, then it is primitive as an element of $G*_{A^t = B}$.
\end{enumerate}
\end{lem}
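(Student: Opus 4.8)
The plan is to derive both parts from Britton's Lemma via the standard analysis of conjugacy and cyclic reduction in HNN extensions; write $\widehat{G} = G*_{A^t = B}$ and recall the convention $g^w = w^{-1}gw$. For part (a) the ``if'' direction is trivial, so suppose $f = g^w$ with $f, g \in G$ and $w \in \widehat{G}$ of $t$-length $n$. The core is the following chain lemma: if $g, g' \in G$ are conjugate in $\widehat G$, then there is a finite sequence $g = h_0, h_1, \ldots, h_m = g'$ of elements of $G$ in which each step is of one of the types (i) $h_{i+1} = h_i^{u}$ for some $u \in G$; (ii) $u^{-1} h_i u \in A$ and $h_{i+1} = \theta(u^{-1}h_i u) \in B$ for some $u \in G$; or (iii) $u^{-1} h_i u \in B$ and $h_{i+1} = \theta^{-1}(u^{-1} h_i u) \in A$ for some $u \in G$.

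I would prove the chain lemma by induction on the $t$-length $n$ of a reduced conjugator $w = g_0 t^{\epsilon_1} g_1 \cdots t^{\epsilon_n} g_n$ with $g' = w^{-1} g w$. For $n = 0$ we have $w \in G$ and a step of type (i) suffices. For $n \geq 1$, the word $w^{-1} g w$ equals an element of $t$-length $0$, so by Britton's Lemma and the well-definedness of $t$-length it is not reduced; since $w$ and $w^{-1}$ are reduced, the only new adjacency is at the centre $t^{-\epsilon_1}(g_0^{-1} g g_0) t^{\epsilon_1}$, which must therefore be a pinch. If $\epsilon_1 = 1$ then $a := g_0^{-1} g g_0 \in A$ and $t^{-1} a t = \theta(a) \in B$; if $\epsilon_1 = -1$ then $g_0^{-1} g g_0 \in B$ and $t(\,\cdot\,)t^{-1} = \theta^{-1}(\,\cdot\,) \in A$. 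In either case, after the pinch cancels we are left with $g' = (w')^{-1} h_1 w'$, where $h_1 \in A \cup B$ is obtained from $g$ by a step of type (ii) or (iii) and $w' = g_1 t^{\epsilon_2} \cdots t^{\epsilon_n} g_n$ has $t$-length $n - 1$; the induction hypothesis produces a chain from $h_1$ to $g'$, to which we prepend the step $h_0 = g \to h_1$.

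This is where the main work lies; granting it, part (a) follows quickly. Apply the chain lemma to $g$ and $g' = f$. If some step were of type (ii) or (iii), let $j$ be the largest index with $h_{j-1} \to h_j$ of that type; then $h_j \in A \cup B$ and every later step is a conjugation in $G$, so $f = h_m$ is conjugate in $G$ to $h_j \in A \cup B$, contradicting the hypothesis on $f$. Hence every step is of type (i) and $f$ is conjugate to $g$ in $G$, as required.

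For part (b), suppose toward a contradiction that $f = x^k$ in $\widehat G$ with $k \geq 2$ and $x \in \widehat G$. Write $x = v y v^{-1}$ with $y$ cyclically reduced. If $y$ had $t$-length $\ell \geq 1$, then (the junctions in $y^k$ inherit the no-pinch condition of the cyclic word $y$) $y^k$ would be cyclically reduced of $t$-length $k\ell \geq 1$; since cyclic $t$-length is a conjugacy invariant and $f \in G$ has cyclic $t$-length $0$, the equality $f = v y^k v^{-1}$ would be impossible. Therefore $\ell = 0$, i.e. $y \in G$, and $f$ is conjugate in $\widehat G$ to $y^k \in G$. By part (a), applicable because $f$ is not conjugate in $G$ to any element of $A \cup B$, we conclude that $f$ is conjugate in $G$ to $y^k$; writing $f = (u^{-1} y u)^k$ with $u \in G$ and $u^{-1} y u \in G$ exhibits $f$ as a proper power in $G$, contradicting the primitivity of $f$ in $G$. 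The main obstacle throughout is the bookkeeping in the chain lemma, namely verifying that the centre is the unique first pinch and that the pinch cancellation genuinely shortens the conjugator while depositing the moved element back into $A \cup B$; the cyclic-reduction facts used in (b) are then routine consequences of Britton's Lemma.
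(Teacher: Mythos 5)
Your argument is correct, but note that the paper does not actually prove this lemma: it is quoted verbatim from \cite[Lemma 2.14]{HuOs}, so there is no in-paper proof to match. Your reconstruction follows the standard route, and it is essentially the argument of the cited source: the ``chain lemma'' you prove by induction on the $t$-length of the conjugator is exactly the conjugacy criterion for HNN extensions (the $t$-length-zero case of Collins' Lemma), and your identification of the central subword $t^{-\epsilon_1}(g_0^{-1}gg_0)t^{\epsilon_1}$ as the unique possible pinch, with the resulting type (ii)/(iii) step landing in $A\cup B$, is the right bookkeeping; part (a) then follows cleanly from the hypothesis that $f$ is not conjugate in $G$ into $A\cup B$. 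The one place where you lean on something your chain lemma does not itself deliver is in part (b): the assertion that a cyclically reduced word of positive $t$-length cannot be conjugate to an element of $G$ is the full Collins' Lemma, not merely the well-definedness of $t$-length for reduced words, and your induction (which always conjugates between elements \emph{of $G$}) does not prove it. Since Collins' Lemma is a standard textbook fact (e.g. Lyndon--Schupp IV.2.5) this is an acceptable citation rather than a gap, but you should name it explicitly rather than call it a routine consequence of Britton's Lemma. With that reference supplied, the deduction in (b) --- reduce to $y\in G$, apply (a) to $y^k$, and contradict primitivity of $f$ in $G$ --- is complete and correct.
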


\begin{lem} \label{rel-hyperbolic-primitive}
Let $G$ be hyperbolic relative to a subgroup $H$ and suppose that $G$ is torsion-free. Let $h \in H$ be primitive as an element of $H$. Then $h$ is primitive in $G$.
\end{lem}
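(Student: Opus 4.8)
The plan is to argue by contradiction. Suppose $h$ is \emph{not} primitive in $G$, so that $h = g^k$ for some $g \in G$ and some integer $k \geq 2$. Since $h$ is primitive in $H$ it is in particular nontrivial, and because $G$ is torsion-free both $g$ and $h$ have infinite order. As $h \in H$, it is a parabolic element of $G$. Recall that every infinite-order element of a relatively hyperbolic group is either parabolic or loxodromic, so it suffices to examine these two possibilities for $g$ and derive a contradiction in each.

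First I would rule out the possibility that $g$ is loxodromic. The key point is that a nonzero power of a loxodromic element is again loxodromic: a loxodromic element acts as a hyperbolic isometry of the coned-off Cayley graph with strictly positive translation length $\tau$, and since $\tau(g^k) = |k|\,\tau(g) > 0$, the element $g^k$ is loxodromic and in particular non-parabolic. This contradicts the fact that $h = g^k$ lies in $H$ and is therefore parabolic. Hence $g$ cannot be loxodromic. I expect this to be the only step drawing on the general theory of relatively hyperbolic groups; everything else is elementary manipulation.

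It remains to treat the case that $g$ is parabolic. Then there exists $t \in G$ with $g' := t^{-1} g t \in H$, and consequently $t^{-1} h t = (g')^k \in H$. Since also $h \in H$, we have $t^{-1} h t \in H \cap H^t$, where $H^t = t^{-1} H t$ in the convention of this section. If $t \notin H$, then \Cref{malnormal-parabolics} shows that $H \cap H^t$ is finite, hence trivial because $G$ is torsion-free; this would force $h = 1$, contradicting primitivity. Therefore $t \in H$, so $g = t g' t^{-1} \in H$. But then $h = g^k$ exhibits $h$ as a proper power inside $H$ (note that $g \neq 1$ has infinite order, so $k \geq 2$ makes this a genuine proper power), contradicting the assumption that $h$ is primitive in $H$. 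This completes the proof. The main obstacle is thus confined to the loxodromic case, where one must invoke that parabolicity of some nonzero power forces parabolicity of the element itself; once this is in hand, the malnormality of the peripheral subgroup together with torsion-freeness disposes of the parabolic case cleanly.
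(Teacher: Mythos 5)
Your proof is correct and follows essentially the same route as the paper's: first show that nonzero powers of loxodromic elements remain loxodromic via relative translation numbers, conclude that $g$ must be parabolic, and then use \Cref{malnormal-parabolics} together with torsion-freeness to force the conjugating element into $H$, contradicting primitivity of $h$ in $H$. Your write-up is slightly more explicit about why the conjugator lies in $H$, but the substance is identical.
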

\begin{proof}
First note that non-trivial powers of loxodromic elements are loxodromic again. In fact, let $g$ be loxodromic and suppose that $g^n$ is parabolic for some $n \neq 0$, i.e. $g^n = \alpha y \alpha^{-1}$, where $y \in H$ and $\alpha \in G$. Then it would follow that $0 = \tau^{\mathrm{rel}}(y) = \tau^{\mathrm{rel}}(g^n) = |n| \tau^{\mathrm{rel}}(g) > 0$ by \cite[Lemma 4.24, Theorem 4.25]{Osin2006}, where $\tau^{\mathrm{rel}}(g)$ denotes the relative translation number of $g$.

So if $h = g^n$ for some $g \in G$ and $n \geq 1$, then $g$ has to be parabolic, i.e. $g = \alpha x \alpha^{-1}$ for some $x \in H$ and $\alpha \in G$. By \Cref{malnormal-parabolics} it follows that $\alpha \in H$, thus $n = 1$, since $h$ was primitive in $H$.
\end{proof}

\begin{lem}\label{hnn-extension-primitive}
Let $G$ be a group and let $a,b \in G$ be two primitive elements. Then any primitive element $g \in G$, considered as an element of the HNN extension $G*_{a^t = b}$ is still primitive.
\end{lem}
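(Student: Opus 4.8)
The plan is to argue by contradiction: suppose that $g = w^n$ in $\Gamma := G*_{a^t = b}$ for some $w \in \Gamma$ and some $n \geq 2$, and derive a contradiction with the primitivity of $g$ in $G$. Since a primitive element has infinite order, $g$, and hence $w$, has infinite order. The first step is to reduce $w$ into the base group $G$. Using the $t$-length analysis underlying Britton's Lemma, a cyclically reduced element of $t$-length $\geq 1$ has all of its nonzero powers cyclically reduced, with $t$-length multiplied accordingly, so that these powers again have positive $t$-length. As $g \in G$ has $t$-length $0$, the cyclically reduced form of $w$ must have $t$-length $0$; that is, $w$ is conjugate in $\Gamma$ to some $v \in G$. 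Writing $w = c v c^{-1}$ we obtain $g = c v^n c^{-1}$, so that $g$ and $v^n$ are two elements of $G$ that are conjugate in $\Gamma$.

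Next I would invoke the conjugacy criterion for HNN extensions (Collins' Lemma): two elements of the base $G$ are conjugate in $\Gamma$ if and only if they are linked by a finite chain $g = g_0, g_1, \ldots, g_k = v^n$ of elements of $G$ in which each $g_i$ is obtained from $g_{i-1}$ either by conjugation by an element of $G$, or---when $g_{i-1}$ lies in the associated subgroup $\langle a \rangle$ (resp.\ $\langle b \rangle$)---by the transfer move $g_{i-1} \mapsto \theta(g_{i-1})$ (resp.\ $\theta^{-1}(g_{i-1})$). The heart of the argument is the observation that every move in such a chain preserves primitivity. Conjugation by a $G$-element clearly does. For a transfer move the point is that, since $a$ is primitive in $G$, the only primitive elements of $\langle a \rangle$ are $a^{\pm 1}$; hence if the primitive element $g_{i-1}$ happens to lie in $\langle a \rangle$ it must equal $a^{\pm 1}$, and its image $\theta(a^{\pm 1}) = b^{\pm 1}$ is primitive because $b$ is primitive. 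The reverse transfer is symmetric. This is precisely where both hypotheses, that $a$ and $b$ are primitive, are used, and it is the mechanism by which \Cref{hnn-conjugacy-primitivity}(b) gets upgraded to cover elements that \emph{are} conjugate into $\langle a\rangle\cup\langle b\rangle$.

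Consequently, starting from the primitive element $g_0 = g$, every $g_i$ along the chain is primitive, so the final term $v^n$ is primitive. But $v$ has infinite order (as $g$ does), so $v^n$ with $n \geq 2$ is a genuine proper power and therefore not primitive---a contradiction. Hence $g$ admits no expression as a proper power in $\Gamma$, i.e.\ $g$ is primitive in $\Gamma$. I expect the main obstacle to be setting up the two group-theoretic inputs cleanly: the $t$-length reduction of $w$ to a conjugate lying in $G$, and the precise form of the conjugacy criterion for HNN extensions together with its transfer moves, since the whole conclusion rests on the fact that these transfer moves respect primitivity.
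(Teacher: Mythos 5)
Your proof is correct, but it follows a genuinely different route from the paper's. The paper argues directly on the putative root $\omega$ of $a$: after disposing of elements not conjugate into $\langle a\rangle\cup\langle b\rangle$ via \Cref{hnn-conjugacy-primitivity}, it reduces to $g=a$, writes $a=\omega^n$, and runs an induction on the $t$-length of $\omega$, carrying out the base case ($t$-length $2$) by an explicit Britton's-Lemma pinch computation and deferring the inductive step to the argument of \cite[Lemma 2.14]{HuOs}. You instead invoke the conjugacy theorem for HNN extensions (Collins' Lemma) twice: once, via the equality of $t$-lengths of conjugate cyclically reduced elements, to force the root $w$ to be conjugate to some $v$ in the base group, and once to produce the chain of $G$-conjugations and transfer moves linking $g$ to $v^n$; the observation that each transfer move preserves primitivity --- because the only primitive elements of $\langle a\rangle$ are $a^{\pm 1}$, whose images $b^{\pm 1}$ are again primitive --- is exactly where the hypotheses on $a$ and $b$ enter, and it does all cases uniformly without the paper's case split. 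Your approach buys a cleaner, more conceptual argument that avoids the hands-on manipulation of reduced words and the appeal to an external induction; its cost is that it imports the full strength of Collins' Lemma, whereas the paper stays within the lighter toolkit (Britton's Lemma and \Cref{hnn-conjugacy-primitivity}) it has already set up. Both arguments are sound.
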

\begin{proof} Let $g \in G$ be a primitive element. If $g$ is neither conjugate to an element of $\langle a \rangle$ nor $\langle b \rangle$, then \Cref{hnn-conjugacy-primitivity} implies that $g$ is primitive as an element of the HNN extension. So we can assume without loss of generality that $g = a$. Let $a = \omega^n$ with $n \geq 1$, where $\omega$ is of $t$-length 2, so $\omega = g_0 t^\epsilon g_1 t^{-\epsilon} g_2$ for $g_0,g_1,g_2 \in G$ and $\epsilon \in \{\pm 1\}$. For the equality $a = \omega^n$ to hold, the expression $t^{-\epsilon} g_0 g_2 t^{\epsilon}$ must be a pinch, so say $g_2 g_0 = b^m$ for some $m$. Then $t^{-\epsilon} g_2 g_0 t^{\epsilon} = a^m$. Expanding the power of $\omega$, we obtain
$$
\omega^n = g_0 t^{\epsilon} g_1 (a^m g_1)^{n-1} t^{-\epsilon} g_2
$$
This implies that $g_1 (a^m g_1)^{n-1} \in \langle a \rangle$, say $g_1 (a^m g_1)^{n-1} = a^k$. Thus
$$
a = \omega^n = g_0 b^k g_2 = g_0 b^k g_2 g_0 g_0^{-1} = g_0 b^{k+m} g_0^{-1}
$$
But since $a$ is primitive in $G$, it follows that $k+m \in \{\pm 1\}$. The equation $g_1 (a^m g_1)^{n-1} = a^k$ is equivalent to $(a^m g_1)^n = a^m a^k  = a^{m+k}$, which implies that $n = 1$, again since $a$ is primitive.

The induction step works as in the proof of \cite[Lemma 2.14]{HuOs}.
\end{proof}

\begin{lem}\label{hnn-extension-primitive-positive-powers}
Let $G$ be a group and let $a,b \in G$ be arbitrary non-trivial elements. Then any primitive element $g \in G$, considered as an element of the HNN extension $G*_{(a^n)^t = b^m}$ is primitive as long as $|n|, |m| \geq 2$.
\end{lem}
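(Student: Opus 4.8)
The plan is to deduce the statement directly from \Cref{hnn-conjugacy-primitivity}(2), following the same strategy as the proof of \Cref{hnn-extension-primitive} but using the hypothesis $|n|,|m|\geq 2$ to eliminate the hard case that appeared there. Write $A = \langle a^n\rangle$ and $B = \langle b^m\rangle$ for the two associated subgroups of the HNN extension $G*_{(a^n)^t = b^m}$.

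First I would record the key observation that, because $|n|,|m|\geq 2$, neither $A$ nor $B$ contains a primitive element. Indeed, any non-trivial element of $A$ has the form $h = a^{kn} = (a^k)^n$ for some integer $k\neq 0$, and since $|n|\geq 2$ this exhibits $h$ as a proper power; hence $h$ is not primitive. The same reasoning applies to $B$ via $|m|\geq 2$. Note that this factorization is valid whether or not $a$ (resp. $b$) has finite order, so no case distinction is required.

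Given this, the conclusion is immediate. Let $g\in G$ be primitive. A conjugate of a primitive element is again primitive, and we have just seen that $A\cup B$ contains no primitive elements; hence $g$ is not conjugate in $G$ to any element of $A\cup B$. This is exactly the hypothesis of \Cref{hnn-conjugacy-primitivity}, and part (2) of that lemma yields that $g$ remains primitive in $G*_{(a^n)^t = b^m}$. I do not expect any genuine obstacle in this argument: the entire force of the condition $|n|,|m|\geq 2$ is that it makes every non-trivial element of the associated subgroups a proper power, which is precisely what fails in \Cref{hnn-extension-primitive} (the case $n=m=1$), where $a$ and $b$ themselves lie in $A,B$ and are primitive. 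That is why the earlier lemma required the elaborate computation handling $g=a$, a case that simply cannot arise here.
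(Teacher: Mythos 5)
Your proof is correct and follows essentially the same route as the paper: the paper likewise observes that a primitive $g$ cannot be conjugate to any element of $\langle a^n\rangle\cup\langle b^m\rangle$ (since every non-trivial element there is a proper power when $|n|,|m|\geq 2$) and then invokes \Cref{hnn-conjugacy-primitivity}(2). Your write-up merely spells out this observation in more detail than the paper does.
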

\begin{proof}
Let $g \in G$ be a primitive element. Then $g$ is neither conjugate to an element of $\langle a^n \rangle$ nor $\langle b^m \rangle$. Thus the claim follows from \Cref{hnn-conjugacy-primitivity}.
\end{proof}

The next proposition shows that the converse of \Cref{finiteindexsubgroupbvc} does not hold.

\begin{prop}\label{finite-index-over-bvc}
There exists a finitely generated torsion-free group $G$ with a finite index subgroup $H$ such that $H$ has $b\CYC$, but $G$ does not.
\end{prop}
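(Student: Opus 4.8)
The plan is to reduce the statement to a count of primitive conjugacy classes and then to manufacture a controlled discrepancy between $G$ and $H$ by an HNN construction. Since both groups are to be torsion-free, $b\CYC$, $b\VCYC$ and BVC coincide, and by \Cref{torsionfreeBVC} together with \Cref{primitiveBVC} a torsion-free group has BVC precisely when it has only finitely many conjugacy classes of primitive elements. Thus it suffices to build a finitely generated torsion-free $G$ with a finite-index subgroup $H$ such that $G$ has infinitely many conjugacy classes of primitive elements while $H$ has only finitely many.

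I would realise $H$ as the kernel of a surjection $\epsilon\colon G\twoheadrightarrow\BZ/k$, so that every ``wild'' primitive class of $G$ is forced into a non-trivial coset and is therefore invisible to $H$. The group $G$ itself I would assemble from a torsion-free base by a (possibly iterated) HNN extension of the shape $G*_{(a^n)^t=b^m}$ with $|n|,|m|\ge 2$; since HNN extensions of torsion-free groups are again torsion-free, this keeps $G$ torsion-free for free, and grading by the stable letter modulo $k$ furnishes the map $\epsilon$ with $t\mapsto 1$. By \Cref{hnn-extension-primitive-positive-powers} each primitive element of the base remains primitive, and by \Cref{hnn-conjugacy-primitivity} distinct conjugacy classes stay distinct, so I can arrange infinitely many pairwise non-conjugate primitive elements lying in the cosets $\epsilon^{-1}(j)$, $j\ne 0$. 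To keep $H$ tame I would start the construction from a group with only finitely many conjugacy classes, obtained from Osin's embedding theorem \cite{Os}, so that the relevant powers of these wild elements, pinched by the edge relations into a single base element, collapse into finitely many primitive classes inside $H$. A final small-cancellation embedding over the resulting relatively hyperbolic group \cite{Os,HuOs} then makes $G$ finitely generated, with \Cref{malnormal-parabolics} and \Cref{rel-hyperbolic-primitive} transporting the primitivity and conjugacy control through the relatively hyperbolic step.

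The crux, and the main obstacle, is the two-sided bookkeeping: the construction must look wild from $G$ yet tame from $H$, and these demands conflict because products of the wild elements lie in $H$. The delicate point is therefore to verify that passing to the finite-index subgroup genuinely collapses the infinitely many $G$-primitive classes into finitely many $H$-primitive classes, equivalently that $H$-conjugacy is coarse enough on the kernel; controlling $H$-conjugacy against $G$-conjugacy, while simultaneously preserving torsion-freeness and securing finite generation through the small-cancellation steps, is where the real work lies.
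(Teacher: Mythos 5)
Your overall architecture is the one the paper uses: its proof simply runs the construction of \cite[Lemma 7.1 and Theorem 7.2]{HuOs} --- a finitely generated torsion-free $G$ carrying an epimorphism onto a finite cyclic group whose kernel $H$ has only finitely many conjugacy classes, built from a base group by iterated HNN extensions followed by small-cancellation quotients over relatively hyperbolic groups --- and then adds exactly the observations you list: the wild elements in the non-trivial cosets are primitive (\Cref{hnn-conjugacy-primitivity} in the countable stage, \Cref{rel-hyperbolic-primitive} in the relatively hyperbolic quotients) and pairwise non-conjugate (\Cref{malnormal-parabolics}), so $G$ fails $b\CYC$ by \Cref{primitiveBVC}, while $H$, having finitely many conjugacy classes, trivially has $b\CYC$.

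There is, however, a genuine gap in your opening reduction. You assert that a torsion-free group has BVC precisely when it has finitely many conjugacy classes of primitive elements, and conclude that it suffices to arrange that $H$ has only finitely many primitive conjugacy classes. Only one implication holds: infinitely many primitive classes rules out BVC (\Cref{primitiveBVC}). The converse fails, because elements of a torsion-free group need not be proper powers of primitive elements; indeed this very paper constructs a finitely generated torsion-free group with \emph{no} primitive elements at all that does not have $b\CYC$. So ``finitely many primitive conjugacy classes in $H$'' does not deliver $b\CYC$ for $H$; by \Cref{torsionfreeBVC} you need every element of $H$ to be conjugate to a power of one of finitely many fixed elements. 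In the Hull--Osin construction this comes from $H$ having finitely many conjugacy classes outright, a property that has to be propagated through every HNN and small-cancellation step rather than recovered at the end from a count of primitive classes. Relatedly, the ``two-sided bookkeeping'' you defer (together with the descent of the finite-order character through the small-cancellation quotients) is precisely the content of \cite[Theorem 7.2]{HuOs}; your sketch identifies it correctly but does not supply it.
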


\begin{proof} The proofs of Lemma 7.1 and Theorem 7.2 in \cite{HuOs} apply. By \Cref{hnn-conjugacy-primitivity} it follows that the elements $a f$ with $f \in F$ in the statement of Lemma 7.1 are primitive in $C$. In the last part of the proof of Theorem 7.2, one only needs to observe that the elements $a f$ with $f \in F$ are primitive in $G(i)$ by \Cref{rel-hyperbolic-primitive} since they lie in the parabolic subgroup $C$. Since $a f_1$ and $a f_2$ lie in different conjugacy classes for any $f_1 \neq f_2$  in $F$ (this was stated at the end of the proof of \cite[Theorem 7.2]{HuOs}), $G$ does not have $b\CYC$ by \Cref{primitiveBVC}.
\end{proof}

\begin{lem}\label{embedding-conjugacy-theorem}
Let $H$ be a torsion-free countable group. There exists a 2-generated torsion-free group $G$ that contains $H$ as a subgroup such that
\begin{enumerate}
    \item Any $g \in G$ is conjugate to an element of $H$.
    \item If $h \in H$ is primitive, then it is primitive as an element of $G$.
    \item If $h, h' \in H$ are not conjugate, then they are not conjugate in $G$.
\end{enumerate}
\end{lem}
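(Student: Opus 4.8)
The plan is to realize $G$ as a direct limit of groups obtained from $H$ by iterated small cancellation over relatively hyperbolic groups, following the embedding machinery of Osin \cite{Os} and Hull--Osin \cite{HuOs}. I would start with $G_0 = H \ast F$, where $F = \langle x, y \rangle$ is free of rank two; this group is torsion-free, contains $H$, and is hyperbolic relative to $\{H\}$, with $F$ a suitable subgroup, since $x,y$ are non-commensurable loxodromic elements with $E_{G_0}(x) \cap E_{G_0}(y) = 1$. The idea is then to build a chain of quotients $G_0 \twoheadrightarrow G_1 \twoheadrightarrow G_2 \twoheadrightarrow \cdots$, where each $G_{i+1}$ is a small-cancellation quotient of $G_i$ chosen so that (i) one further element becomes conjugate to an element of $H$, and (ii) the images of the original generators of $H$ are expressed as words in $x,y$, so that in the limit the group is generated by $x$ and $y$. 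Osin's theorems guarantee that at each stage $H$ stays embedded, that $G_i$ remains torsion-free and hyperbolic relative to $\{H\}$, and that no new torsion is created.

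Because the maps $G_i \to G_{i+1}$ are surjective, every element of the limit $G = \varinjlim G_i$ is the image of an element of the countable group $G_0$; a bookkeeping enumeration of the elements of $G_0$ therefore lets me arrange, stage by stage, that every element of $G$ is conjugate into $H$, giving (a), while the absorption relations make $G$ genuinely $2$-generated and torsion-free. Moreover, conjugacy in a direct limit of surjections is detected at a finite stage: two elements are conjugate in $G$ if and only if they are conjugate in some $G_i$. Hence for (c) it suffices to preserve non-conjugacy at each finite stage, and this follows from relative malnormality: since $H$ remains peripheral in each $G_i$, \Cref{malnormal-parabolics} shows that elements of $H$ that are non-conjugate in $H$ stay non-conjugate in $G_i$, provided the added relators are kept ``far'' from $H$ so that they never fuse two distinct $H$-conjugacy classes. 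These parts run as in \cite{HuOs}.

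The genuinely new point, and the hard part, is property (b): preservation of primitivity. Again it suffices to check this at each finite stage, since $h = g^n$ in $G$ would already hold in some $G_i$. So I would verify that a fixed primitive $h \in H$ remains primitive under each elementary step. Since $H$ stays peripheral, $h$ remains parabolic, and \Cref{rel-hyperbolic-primitive} shows that $h$ primitive in $H$ forces $h$ primitive in $G_i$; for any step modeled on an HNN extension, \Cref{hnn-extension-primitive} (or \Cref{hnn-extension-primitive-positive-powers}) applies instead. The subtlety to control is that a newly added relation could a priori exhibit $h = g^n$ for some $g$ of positive $t$-length; here \Cref{hnn-conjugacy-primitivity}(1) together with the malnormality of the peripheral $H$ rules this out, since such a $g$ would have to be parabolic and conjugate into $H$, reducing to the already-treated parabolic case. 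Passing to the limit, primitivity of $h$ in every $G_i$ yields primitivity in $G$, completing (b); properties (a), (b), (c) are then exactly what make $G$ usable, via \Cref{primitiveBVC}, in the constructions that follow.
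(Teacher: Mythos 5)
Your proposal is correct and follows essentially the same route as the paper: start from $H \ast F(x,y)$ viewed as hyperbolic relative to $H$ with $\langle x,y\rangle$ suitable, iterate HNN extensions (conjugating each loxodromic element into $H$) followed by the small-cancellation quotients of \cite[Theorem 6.2]{HuOs} that absorb the stable letters and the enumerated elements of $H$ into $\langle x,y\rangle$, and pass to the direct limit. The three properties are then obtained exactly as you indicate — (a) from the enumeration, (b) from \Cref{rel-hyperbolic-primitive} applied to each relatively hyperbolic stage, and (c) from \Cref{malnormal-parabolics} together with torsion-freeness — which matches the paper's argument.
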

\begin{proof}
Let
$$
G(0) = H * F(x,y)
$$
and enumerate elements of $H = \{ 1 = h_0, h_1, h_2, \ldots \}$ as well as $G(0) = \{ 1 = g_0, g_1, g_2, \ldots \}$.

Suppose the group $G(i)$ has been constructed such that $G(i)$ is hyperbolic relative to $H$, $\langle x, y \rangle$ is a suitable subgroup and $h_1, \ldots, h_i$ lie in $\langle x, y \rangle$ and $g_j$ for $1 \leq j \leq i$ is conjugate to an element of $H$.  Construct $G(i+1)$ out of $G(i)$ as follows: If $g_{i+1}$ is parabolic, then let $G'(i) = G(i)$, otherwise let $\iota \colon E_{G(i)}(g_{i+1}) \to \langle h_1 \rangle$ be an isomorphism and form
$$
G'(i) = \langle G_i, t \mid e^t = \iota(e) \text{ for } e \in E_{(G(i)}(g_{i+1}) \rangle
$$

By \cite[Corollary 2.16]{HuOs}, $G'(i)$ is still hyperbolic relative to $H$ and $\langle x, y \rangle$ is again suitable. Now apply \cite[Theorem 6.2]{HuOs} to the suitable subgroup $\langle x, y \rangle$, words $\{ h_{i+1}, t \}$ resp. $\{ h_{i+1} \}$ to obtain $G(i+1)$. Observe that there is a canonical quotient map $G(i) \to G(i+1)$. Here we do not distinguish $H$ and its image in $G'(i)$ or $G(i+1)$. Note that $G(i+1)$ is also hyperbolic relative  to $H$, $\langle x, y \rangle$ is again suitable.

Letting $G$ be the direct limit of the $G(i)$, it follows that $G$ will be two-generated and any element of $G$ will be conjugate to an element of $H$. Moreover, if $h \in H$ is primitive, then it will remain primitive in $G(i)$ by \Cref{rel-hyperbolic-primitive}  for any $i$, thus it will be primitive in $G$. By \Cref{malnormal-parabolics} non-conjugate elements in $H$ remain non-conjugate in $G$.
\end{proof}

\begin{prop} \label{group-three-conjugacy-classes-one-primitive}
There exists a finitely generated torsion-free group $G$ which has exactly three conjugacy classes $\{ (1), (x), (x^2) \}$, where $x \in G$ is a primitive element.
\end{prop}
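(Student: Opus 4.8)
The plan is to split the problem into a countable construction and a finite-generation upgrade. It suffices to produce a \emph{countable} torsion-free group $H$ whose only conjugacy classes are $(1)$, $(x)$ and $(x^2)$ with $x$ primitive: applying \Cref{embedding-conjugacy-theorem} to such an $H$ yields a $2$-generated (hence finitely generated) torsion-free group $G \supseteq H$, and by parts (a) and (c) of that lemma the inclusion induces a bijection between the conjugacy classes of $H$ and those of $G$ (surjective by (a), injective by (c)). Hence $G$ again has exactly the three classes $(1)$, $(x)$, $(x^2)$, while part (b) guarantees that $x$ stays primitive in $G$. So everything reduces to the countable case.

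To build $H$ I would realize it as a direct limit $H=\varinjlim H_n$ along canonical quotient maps, starting from $H_0=F(x,y)$, maintaining at each stage: (I) $H_n$ is torsion-free; (II) there is a surjection $\phi_n\colon H_n\to\BZ/2$ with $\phi_n(x)=1$; (III) $x$ is loxodromic with $E_{H_n}(x)=\langle x\rangle$, so that $x$ is primitive; and (IV) a fixed suitable subgroup (the image of $\langle x,y\rangle$) persists, so the machinery of \cite{Os} and \cite{HuOs} keeps applying. Invariant (II) is the key bookkeeping device: since $\phi_n$ is constant on conjugacy classes and $\phi_n(x)=1\neq 0=\phi_n(x^2)$, it forces $x\not\sim x^2$ at every stage, hence in the limit. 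Enumerating the elements of $\bigcup_n H_n$, at step $n+1$ I take the next nontrivial element $g$ not yet conjugate to $x$ or $x^2$ and make it conjugate to the target $z:=x$ if $\phi_n(g)=1$ and $z:=x^2$ if $\phi_n(g)=0$, by passing to a quotient $H_{n+1}$ realizing $g\sim z$. Since $\phi_n(g)=\phi_n(z)$, the imposed relations can be taken $\phi$-homogeneous, so $\phi_n$ descends to $\phi_{n+1}$; the theorems of \cite{HuOs} (applied over the suitable subgroup, through the maximal elementary subgroups $E_{H_n}(\cdot)$) keep $H_{n+1}$ torsion-free and relatively hyperbolic with a suitable subgroup and leave $E_{H_{n+1}}(x)=\langle x\rangle$ intact. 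In the limit every element is conjugate to $1$, $x$, or $x^2$; $\phi=\varinjlim\phi_n$ separates $(x)$ from $(x^2)$; $x$ is primitive, because a relation $x=u^m$ with $m\geq 2$ would already hold at some finite stage, contradicting (III) (note $E(x)=\langle x\rangle$ forces any root of $x$ to lie in $\langle x\rangle$); and $H$ is torsion-free. Thus $H$ has exactly the three required classes.

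The main obstacle is invariant (III) in the presence of the merges forced among the powers of $x$. Having only three classes forces $x^k\sim x$ for every odd $k$ (as $\phi(x^k)=1$) and $x^k\sim x^2$ for every even $k\neq 0$; in particular $x^3\sim x$ is unavoidable. The even merges $x^k\sim x^2$ with $k\geq 4$ and the merges of primitive elements into the correct class are harmless: they are covered by \Cref{hnn-extension-primitive-positive-powers} and \Cref{hnn-extension-primitive} respectively, which preserve primitivity. The genuinely delicate merges are exactly those of the form $x^k\sim x$ with $k$ odd and $\geq 3$. One cannot impose $x^3\sim x$ by the naive HNN extension $\langle H_n,t\mid t^{-1}x^3t=x\rangle$, because there $(t^{-1}xt)^3=x$, i.e.\ $t^{-1}xt$ is a cube root of $x$; this is the Baumslag--Solitar group $\langle x,t\mid txt^{-1}=x^3\rangle$, in which $x$ is a proper power, so primitivity is destroyed (and indeed neither \Cref{hnn-extension-primitive} nor \Cref{hnn-extension-primitive-positive-powers} applies once the target is $x$ itself with exponent one). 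The whole point of working inside the relatively hyperbolic framework is to impose such conjugacies by small-cancellation quotients that keep $E(x)=\langle x\rangle$ and create no new roots of $x$; the primitivity-tracking results \Cref{rel-hyperbolic-primitive} and \Cref{hnn-conjugacy-primitivity}, together with malnormality of parabolics (\Cref{malnormal-parabolics}), are precisely the tools needed to verify that (I)--(IV) survive each step. Checking that a single application of the Osin--Hull small-cancellation theorem can simultaneously (a) merge the prescribed pair of classes, (b) preserve $E(x)=\langle x\rangle$, (c) respect the $\BZ/2$-grading, and (d) keep the group torsion-free, relatively hyperbolic and equipped with a suitable subgroup, is the technical heart of the argument.
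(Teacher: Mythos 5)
Your outer reduction is exactly the paper's: build a countable torsion-free group with the three prescribed classes and a primitive $x$, then feed it to \Cref{embedding-conjugacy-theorem}, whose parts (a), (c), (b) give a bijection on conjugacy classes and preserve primitivity. That part is fine.

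The countable construction, however, is built on a self-defeating invariant. You insist on a surjection $\phi\colon H\to\BZ/2$ with $\phi(x)=1$ and route each element $g$ to the class of $x$ or of $x^2$ according to the parity of $\phi(g)$; as you note, this forces $x^3\sim x$. But $x^3\sim x$ is not merely "delicate" — it is flatly incompatible with primitivity of $x$: if $g^{-1}x^3g=x$, then $\bigl(g^{-1}xg\bigr)^3=x$, so $x$ is a cube of a conjugate of itself. No small-cancellation or relatively hyperbolic machinery can avoid this one-line identity; the technical heart you defer to is an impossibility. Worse, in \emph{any} group with exactly the classes $(1),(x),(x^2)$ and $x$ primitive one must have $x^3\sim x^2$, whence $\phi(x^3)=\phi(x^2)$ and so $\phi(x)=0$ for every homomorphism to $\BZ/2$ — your invariant (II) cannot hold in the target group at all. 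The paper sidesteps this by partitioning the nontrivial elements not by a homomorphism but by \emph{primitivity}: starting from $G_0=\langle x\rangle\cong\BZ$, at each stage all primitive elements are conjugated to $x$ and all non-primitive nontrivial elements (including every $x^k$ with $|k|\ge 2$, odd or even) are conjugated to $x^2$, via a multiple HNN extension. Then \Cref{hnn-extension-primitive} handles the relations $p_i^{s_i}=x$ (both sides primitive) and \Cref{hnn-extension-primitive-positive-powers} handles $g_i^{t_i}=x^2$ (both sides proper powers), so $x$ stays primitive throughout; no relative hyperbolicity is needed in the countable step, only Britton's lemma. Your argument would go through if you replaced the $\BZ/2$-grading by this primitivity dichotomy.
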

\begin{proof} We first construct a countable group as follows. We let $G_0 = \langle x \rangle \cong \BZ$. Of course, the only primitive elements in $G_0$ are $x$ and $x^{-1}$. Suppose we have already constructed a chain $G_0 \subgroup G_1 \subgroup  \ldots \subgroup G_n$ of countable torsion-free groups such that the element $x$, viewed as an element of $G_n$, is primitive. To construct $G_{n+1}$ out of $G_n$, we first enumerate all primitive elements of $G_n$ by $\{p_1, p_2, \ldots \}$ and enumerate all non-trivial elements that are non-primitive by $\{ g_1, g_2, \ldots \}$. Secondly, we form the multiple HNN extension
$$
G_{n+1} = \langle G_n, \{s_i\}_{i \in \BN}, \{t_i\}_{i \in \BN} \mid p_i^{s_i} = x, g_i^{t_i} = x^2 \rangle.
$$
By an inductive application of \Cref{hnn-extension-primitive} and \Cref{hnn-extension-primitive-positive-powers} it follows that $x$ remains primitive in $G_{n+1}$.  Finally, we let $G = \bigcup_{n \geq 0} G_n$. By construction, $G$ satisfies our desired properties. Note that since $x$ is primitive, $x$ and $x^2$ are non-conjugate. To obtain a finitely generated example we can apply \Cref{embedding-conjugacy-theorem}.
\end{proof}

\begin{rem}
If we let $G$ be a group as constructed in \Cref{group-three-conjugacy-classes-one-primitive} and $x \in G$ a primitive element, then $G \times \BZ$ does not have $b\CYC$, since it has infinitely many primitive conjugacy classes $\{ (x, n) \mid n \in \BZ \}$. On the other hand, $G \times \BZ$ has $b \CYC$ if $G$ is a torsion-free group with exactly two conjugacy classes.
\end{rem}

As we have noted, if $G$ is a torsion-free group with exactly two conjugacy classes $G \times \BZ$ has $b\CYC$. However, the situation changes if we allow for a semidirect product:

\begin{prop}\label{two-conjugacy-classes-semi-bz-no-bcyc}
There exists a countable torsion-free group $H$ with exactly two conjugacy classes such that a certain extension $H \rtimes \BZ$ does not have $b\CYC$.
\end{prop}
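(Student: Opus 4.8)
The plan is to produce, in tandem, a countable torsion-free group $H$ with exactly two conjugacy classes together with an automorphism $\varphi \in \Aut(H)$ for which the semidirect product $G = H \rtimes_\varphi \BZ$ carries infinitely many conjugacy classes of primitive elements. Since $H$ and $\BZ$ are torsion-free, so is $G$: projecting to $G/H \cong \BZ$ kills any would-be torsion element of nonzero $t$-exponent, and $H$ itself is torsion-free. Hence for $G$ the three notions BVC, $b\VCYC$ and $b\CYC$ coincide, and by \Cref{primitiveBVC} it suffices to exhibit infinitely many pairwise non-conjugate primitive elements. This is exactly the behaviour excluded for the trivial extension $H \times \BZ$ by the remark preceding this proposition, so the automorphism $\varphi$ will have to be genuinely outer.

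Write $t$ for a generator of the $\BZ$-quotient, normalized so that $tht^{-1} = \varphi(h)$ for $h \in H$. I first locate where the primitive classes live. The quotient map $G \to G/H \cong \BZ$ is a conjugacy invariant and sends each element $xt$ with $x \in H$ to the generator, so if $xt = w^m$ then $m = \pm 1$; thus \emph{every element of $t$-exponent one is primitive}. A direct computation with the above convention shows that $xt$ and $yt$ are conjugate in $G$ if and only if $y = g\,\varphi^{k}(x)\,\varphi(g)^{-1}$ for some $g \in H$ and $k \in \BZ$, where the power $\varphi^{k}$ is realized by conjugating with $t^{k}$. Calling $x,y \in H$ \emph{$\varphi$-twisted conjugate} when $y = gx\varphi(g)^{-1}$ for some $g$, this says that the conjugacy classes of primitive $t$-exponent-one elements of $G$ are exactly the $\langle\varphi\rangle$-orbits of $\varphi$-twisted conjugacy classes of $H$. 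The goal therefore becomes: arrange this set to be infinite while collapsing the \emph{ordinary} conjugacy classes of $H$ to only two. Note that $G = H\rtimes_\varphi\BZ$ is precisely the HNN extension of $H$ over $H$ along $\varphi$, in which the associated subgroups are all of $H$; consequently there is no malnormality to exploit at this top level, and the separation of the $x_it$ is intrinsically a twisted-conjugacy question about the pair $(H,\varphi)$.

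I would build $(H,\varphi)$ as a direct limit $\varinjlim (H_n,\varphi_n)$ of relatively hyperbolic groups equipped with a compatible system of automorphisms, in the spirit of \Cref{group-three-conjugacy-classes-one-primitive} and \Cref{embedding-conjugacy-theorem}, so that $\varphi = \varinjlim \varphi_n$. One starts from a free group carrying an infinite family of $\varphi$-fixed (or $\varphi$-periodic) seeds $x_1, x_2, \ldots$, and at each stage performs two interleaved families of $\varphi_n$-equivariant HNN extensions over the relatively hyperbolic group constructed so far: extensions that conjugate pairs of nontrivial elements of the fiber to one another, driving the number of \emph{ordinary} conjugacy classes of $H$ down to two in the limit; and extensions used only to keep the distinguished $x_i$ pairwise twisted-non-conjugate. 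At every stage one checks, using \Cref{malnormal-parabolics}, \Cref{rel-hyperbolic-primitive} and \Cref{hnn-conjugacy-primitivity} exactly as in the proofs of \Cref{group-three-conjugacy-classes-one-primitive} and \Cref{embedding-conjugacy-theorem}, that torsion-freeness is preserved and that the prescribed conjugacy and twisted-conjugacy relations are neither created nor destroyed beyond what is intended. Passing to the limit yields $H$ with exactly two conjugacy classes and $\{x_i t\}$ an infinite family of pairwise non-conjugate primitive elements of $G = H\rtimes_\varphi\BZ$, whence $G$ has no $b\CYC$ by \Cref{primitiveBVC}.

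The main obstacle is the tension built into the two requirements, which pull in opposite directions inside a single group: collapsing $H$ to two conjugacy classes forces conjugacy \emph{within the fiber} to be as coarse as possible, yet each new element of $H$ introduced to effect a merger acts by $\varphi$-twisted conjugation on the $t$-exponent-one layer and so threatens to merge the very classes $[x_i t]$ that must be kept apart. Reconciling these demands is the technical heart of the argument, and it is compounded by the need to carry the automorphism $\varphi$ coherently through the whole HNN tower, i.e. to make every extension $\varphi$-equivariant. The resolution is to interleave the two families of extensions so that the fiber-collapsing moves are performed over edge subgroups that remain malnormal relative to the data controlling the $x_i$, and to invoke the small-cancellation machinery over relatively hyperbolic groups of \cite{Os} and \cite{HuOs} to guarantee that torsion-freeness, primitivity, and the pairwise twisted-non-conjugacy of the $x_i$ survive all infinitely many steps.
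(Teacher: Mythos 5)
Your overall strategy---a direct limit of HNN extensions, ending with an appeal to \Cref{primitiveBVC}---is in the right spirit, but the route you choose leaves the hardest step unperformed, and it is a step the paper's proof is specifically designed to avoid. You propose to build the pair $(H,\varphi)$ first and then form $G=H\rtimes_\varphi\BZ$, which forces you to (i) carry a compatible system of automorphisms $\varphi_n$ through every stage of the tower, i.e.\ make each multiple HNN extension $\varphi_n$-equivariant while it collapses all ordinary conjugacy classes of the fiber, and (ii) keep an infinite family of seeds pairwise \emph{twisted}-non-conjugate throughout. Neither point is established: the paper's toolkit (\Cref{hnn-conjugacy-primitivity}, \Cref{malnormal-parabolics}, \Cref{rel-hyperbolic-primitive}) controls ordinary conjugacy and primitivity under HNN extensions, and there is no twisted-conjugacy analogue of \Cref{hnn-conjugacy-primitivity} available here; your closing paragraph names this tension as ``the technical heart'' but resolves it only by assertion. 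Moreover, by restricting the witnesses to the single coset $Ht$ you make the non-conjugacy question intrinsically a Reidemeister-number computation for an outer automorphism of a two-conjugacy-class group, which is genuinely delicate.

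The paper sidesteps all of this by constructing $G$ together with an epimorphism $\varepsilon\colon G\to\BZ$ directly and setting $H=\ker(\varepsilon)$ at the very end (the extension splits automatically since $\BZ$ is free). Concretely: $G_0=\langle a,b\rangle$ with $\varepsilon_0(a)=0$, $\varepsilon_0(b)=1$; at each stage one forms a multiple HNN extension conjugating every nontrivial element of $\ker(\varepsilon_n)$ to $a$, with the stable letters placed in the kernel, so that in the limit $\ker(\varepsilon)$ has exactly two conjugacy classes. The infinite family of witnesses is $\{ab^m\}_{m\neq 0}$: these lie in \emph{distinct} cosets of $H$, so their pairwise non-conjugacy is automatic from $\varepsilon(ab^m)=m$ and costs nothing; the only thing to check is that each $ab^m$ stays primitive, which follows from \Cref{hnn-conjugacy-primitivity} precisely because every associated subgroup $\langle g_i\rangle$ lies in the kernel while $\varepsilon(ab^m)\neq 0$. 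No automorphism is ever constructed, no equivariance is needed, and no relative hyperbolicity is required for this countable statement (that machinery enters only for the finitely generated upgrade in \Cref{bvc-fg-extension-by-integers}). You should reorganize your argument along these lines: choose the witnesses to have distinct images in $\BZ$ rather than forcing them all into the coset $Ht$, and let the $\BZ$-grading do the separation for you.
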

\begin{proof}
Let $G_0 = \langle a,b\rangle$ be a free group of rank 2, let $\varepsilon_0 \colon G_0 \to \BZ$ be defined by mapping $a$ to $0$ and $b$ to $1$. Note that for any $m \in \BZ$, the element $ab^m$ is primitive and $\varepsilon_0(ab^m) = m$.

Suppose $G_n$ and $\varepsilon_n \colon G_n \to \BZ$ have been constructed. To obtain $G_{n+1}$, enumerate all non-trivial elements of $\ker \varepsilon_n$ by $\{ g_1, g_2, \ldots \}$ and form the multiple HNN extension
$$
G_{n+1} = \langle G_n, \{ t_i \}_{i \in \BN} \mid g_i^{t_i} = a \rangle.
$$
We can extend $\varepsilon_n$ to $G_{n+1}$ to define $\varepsilon_{n+1} \colon G_{n+1} \to \BZ$ by arbitrarily assigning a value to the stable letters $t_i$. Now note that for $m \neq 0$, $ab^m \in G_0 \subgroup G_n$ is neither conjugate to an element of $\langle g_i \rangle$, nor to an element of $\langle a \rangle$. Thus, by \Cref{hnn-conjugacy-primitivity}, the elements $ab^m$ are primitive in $G_{n+1}$ for $m\neq 0$.

Let $G$ be the direct limit of the $G_n$, and let $\varepsilon \colon G \to \BZ$ be induced by the epimorphisms $\varepsilon_n$. Any non-trivial element of $\ker(\varepsilon)$ is then conjugate to $a$. However, since for $m\neq 0$, the elements $ab^m$ are primitive and obviously in different conjugacy classes as $\varepsilon(ab^m)=m$, it follows that $G$ does not have $b\CYC$.
\end{proof}

\begin{prop} 
For any $m \geq 1$, there exists a finitely generated torsion-free group $G$ that has exactly $n+1$ conjugacy classes $(1), (x_1), \ldots, (x_m)$ such that $x_i^k$ is conjugate to $x_i$ for any $i$ and any $k \neq 0$.
\end{prop}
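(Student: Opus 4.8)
The plan is to construct a countable torsion-free model with the desired conjugacy structure and then make it finitely generated by means of \Cref{embedding-conjugacy-theorem}. (Note that the statement should read ``$m+1$ conjugacy classes'', matching the list $(1),(x_1),\dots,(x_m)$.) The crucial point is that the condition $x_i^k\sim x_i$ for all $k\neq 0$ forces each non-trivial class to be \emph{power-closed}, so the $x_i$ cannot be separated by any homomorphism to an abelian group; instead I would build the separation into the construction using relative hyperbolicity. First I would fix the seeds: by Osin's embedding theorem \cite{Os} choose countable torsion-free groups $P_1,\dots,P_m$, each having exactly two conjugacy classes, with a non-trivial element $x_i\in P_i$. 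Inside $P_i$ every non-trivial element, in particular every power $x_i^k$ with $k\neq 0$, is conjugate to $x_i$, so the power-closedness is already ``internal'' to each $P_i$. (For $m=1$ this is exactly a torsion-free group with two conjugacy classes, and no further work is needed.)

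Next I would assemble the seeds and collapse the surplus. Set $H_0=P_1\ast\dots\ast P_m$; this is torsion-free and hyperbolic relative to $\{P_1,\dots,P_m\}$, but has infinitely many conjugacy classes. Imitating the iteration of \Cref{embedding-conjugacy-theorem}, at each stage I enumerate the elements of $H_n$ and, for every loxodromic $g$, adjoin an HNN stable letter identifying the maximal elementary subgroup $E_{H_n}(g)$ — which is infinite cyclic since $H_n$ is torsion-free — with $\langle x_1\rangle\subseteq P_1$ via an isomorphism of infinite cyclic groups. By the analogue of \cite[Corollary 2.16]{HuOs} used there, this keeps the group torsion-free and hyperbolic relative to $\{P_1,\dots,P_m\}$ and turns $g$ into a parabolic element conjugate to a power of $x_1$, hence (as $P_1$ has two classes) conjugate to $x_1$. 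Passing to $H=\varinjlim H_n$, a diagonal argument shows that every element of $H$ is conjugate into one of the $P_i$, and thus conjugate to some $x_i$.

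Then I would count the classes and pass to a finitely generated group. By \Cref{malnormal-parabolics} in its multi-parabolic form, together with the remark following it, non-conjugate elements of a parabolic subgroup remain non-conjugate in a torsion-free relatively hyperbolic group; applied across the family $\{P_i\}$ this shows the $x_i$ are pairwise non-conjugate and none is trivial. Hence $H$ has exactly the $m+1$ classes $(1),(x_1),\dots,(x_m)$, and each $(x_i)$ is power-closed because $x_i^k\sim x_i$ already holds inside $P_i$. Finally, applying \Cref{embedding-conjugacy-theorem} to $H$ produces a $2$-generated torsion-free group $G\supseteq H$ in which every element is conjugate into $H$ and non-conjugate elements of $H$ stay non-conjugate; therefore $G$ inherits precisely the $m+1$ conjugacy classes together with every relation $x_i^k\sim x_i$, as required.

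The hard part will be the bookkeeping for the collapse relative to several parabolic subgroups simultaneously: one must verify that adjoining the stable letters along $E_{H_n}(g)\to\langle x_1\rangle$ preserves relative hyperbolicity with respect to the \emph{whole} family $\{P_1,\dots,P_m\}$ (rather than a single parabolic, as in \Cref{embedding-conjugacy-theorem}) and leaves the other factors $P_j$ untouched. This is exactly what guarantees that the count is neither too small (two seeds getting merged, which malnormality of distinct parabolics must prevent) nor too large (residual loxodromic classes surviving into the limit, which the enumeration/diagonal argument must rule out).
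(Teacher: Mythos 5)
Your construction is correct in outline but follows a genuinely different route from the paper's. The paper works entirely with elementary HNN-extension combinatorics: starting from a free group $\langle a,b\rangle$ it first forces power-closure by the relations $(a^i)^{s_i}=a$ and $(b^i)^{t_i}=b$, and then at each stage partitions the non-trivial elements into $S_0\sqcup S_a\sqcup S_b$ according to whether some non-trivial power is already conjugate to $a$ or to $b$ (the observation $S_a\cap S_b=\emptyset$ is what keeps the classes apart), collapsing each piece into $\langle a\rangle$ or $\langle b\rangle$ by further HNN extensions; non-conjugacy of $a$ and $b$ is preserved throughout by repeated case-by-case applications of \Cref{hnn-conjugacy-primitivity}, i.e.\ essentially Britton's lemma. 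You instead import the power-closure wholesale from seed groups $P_i$ with two conjugacy classes, free-product them, and let relative hyperbolicity do the bookkeeping: malnormality of the parabolic family keeps the $x_i$ in distinct classes, and collapsing every loxodromic class into $\langle x_1\rangle$ along maximal elementary subgroups kills the surplus. Both routes finish by feeding the countable model into \Cref{embedding-conjugacy-theorem}. Your version is conceptually cleaner and uniform in $m$ --- the delicate $S_a\cap S_b=\emptyset$ analysis is replaced by the structural fact that distinct parabolics of a torsion-free relatively hyperbolic group cannot share non-trivial conjugates --- but it leans on heavier machinery: you need \cite[Theorem 1.4]{Osin2006} in its full multi-parabolic form (the paper's \Cref{malnormal-parabolics} records only the single-parabolic case) and the analogue of \cite[Corollary 2.16]{HuOs} for HNN extensions over a collection of parabolics, which you rightly flag as the point requiring verification; both do hold in the generality you need, so I see no gap. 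You are also correct that the statement should read $m+1$ rather than $n+1$ conjugacy classes.
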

\begin{proof} Note that any torsion-free group with exactly two conjugacy classes will have the property that $x^k$ is conjugate to $x$ as long as $x$ is non-trivial. So we will demonstrate the claim for $m = 2$, the general case follows from an analogous argument. Let
$G_{-1} = \langle a,b \rangle$ be a free group of rank two, and define
$$G_0 = \langle G_{-1}, \{s_i\}_{i \in \BZ \setminus \{0\}}, \{t_i\}_{i \in \BZ \setminus \{0\}} \mid (a^i)^{s_i} = a, (b^i)^{t_i} = b \rangle. $$
Now observe that the elements $a$ and $b$ are not conjugate in $G_0$ by repeated application of \Cref{hnn-conjugacy-primitivity}. If we form an HNN extension with relation $(a^i)^{s_i} = a$ we apply \Cref{hnn-conjugacy-primitivity} to the element $b$. For relations of the type $(b^i)^{t_i} = b$ we apply the same lemma to the element $a$.

We proceed constructing countable torsion-free groups $G_n$ for $n \geq 1$ inductively. First, observe that we can write $G_n \setminus \{1 \} = S_0 \sqcup S_a \sqcup S_b$, where $S_a$ resp. $S_b$ are those elements of $G_n$ which have a non-trivial power that is conjugate to $a$ resp. $b$, and $S_0$ being defined as the complement of $S_a \cup S_b$. Note that $S_a \cap S_b = \emptyset$: If $g \in S_a \cap S_b$, then $g^k \sim a$ and $g^l \sim b$ for some $k, l \neq 0$. But then $g^{kl} \sim a^l \sim a$, and at the same time $g^{kl} \sim b^k \sim b$. But this is impossible since $a$ and $b$ are not conjugate in $G_n$ by induction. Our construction proceeds in two steps:

\textbf{Step 1.} Enumerate all element of $S_a \cup S_b = \{ g_1, g_2, \ldots \}$. We form the multiple HNN extension
$$
Q = \langle G_n, \{ t_i \}_{i \in \BN} \mid g_i^{t_i} = b \text{ if } g_i \in S_b \text{ , otherwise } g_i^{t_i} = a \rangle.
$$
In other words $Q$ is the direct limit of a sequence of HNN extensions $Q_0 \subgroup Q_1 \subgroup Q_2 \subgroup \ldots$ where $Q_0 = G_n$ and 
$$
Q_{i} = \langle Q_{i-1}, t_i \mid g_i^{t_i} = b \text{ if } g_i \in S_b \text{ , otherwise } g_i^{t_i} = a \rangle.
$$

Now we prove by induction that $a$ and $b$ are not conjugate in $Q_i$ for any $i$. Suppose the claim is true for $Q_{i-1}$. If $g_i \in S_a$ then apply \Cref{hnn-conjugacy-primitivity} to the element $b$. Note that $b$ is not conjugate to a power of $a$ in $Q_{i-1}$ by induction. Moreover $b$ is not conjugate to $g_i^n$ for any $n \neq 0$ in $Q_{i-1}$. Otherwise, it would follow that $b \sim b^k \sim g_i^{nk} \sim a^n \sim a$ in $Q_{i-1}$ since there is some $k \neq 0$ such that $g_i^k \sim a$. Interchanging the roles of $a$ and $b$, we see that the same conclusion holds if $g_i \in S_b$. Hence it follows that $a$ and $b$ are non-conjugate in $Q$.

\textbf{Step 2.} Enumerate all elements of $S_0 = \{ h_1, h_2, \ldots \}$. Again we construct a sequence of HNN extensions $P_0 \subgroup P_1 \subgroup \ldots $, starting with $P_0 = Q$. Suppose $P_{i-1}$ has been constructed and $a,b$ are non-conjugate in $P_{i-1}$. To form $P_i$, we consider the following cases:
\begin{enumerate}
    \item If a non-trivial power of $h_i$ is conjugate to $a$ in $P_{i-1}$, we form the HNN extension
    $$
    P_i = \langle P_{i-1}, s_i \mid h_i^{s_i} = a \rangle.
    $$
    We can again employ \Cref{hnn-conjugacy-primitivity} to the element $b$ to prove that $a$ and $b$ are non-conjugate in $P_i$ using the same argument as in step 1.
    
    \item If a non-trivial power of $h_i$ is conjugate to $b$ in $P_{i-1}$, we let
    $$
    P_i = \langle P_{i-1}, s_i \mid h_i^{s_i} = b \rangle.
    $$
    Again, interchanging the roles of $a$ and $b$ one sees that these two elements remain non-conjugate in $P_i$.
    
    \item In the remaining case we can choose
    $$
    P_i = \langle P_{i-1}, s_i \mid h_i^{s_i} = a \rangle.
    $$
    and observe that \Cref{hnn-conjugacy-primitivity} can be applied.
\end{enumerate}
We let $G_{n+1} = \bigcup_{i \geq 0} P_i$. Note that $a$ and $b$ are non-conjugate in $G_{n+1}$ and all elements of $G_n \setminus \{ 1 \}$ are either conjugate to $a$ or $b$ in $G_{n+1}$.

Finally, we define $G = \bigcup_{n \geq 0} G_n$. Again, \Cref{embedding-conjugacy-theorem} yields a finitely generated example.
\end{proof}

We see in particular that for a group $G$ as constructed in the previous proposition, the group $G \times \BZ$ has $b\CYC$. 

\begin{ex}
Letting $G$ be an infinite 2-generated group of exponent $p$ with exactly $p$ conjugacy classes (for example, as constructed in \cite[Theorem 41.2]{Olshanskij1991}), then the group $G \times \BZ$ does not have $b\CYC$. This can be seen by considering the elements $(x,p^n)$ for $n\in \BN$ where $x \in G$ is non-trivial.
\end{ex}

It is easy to see that a torsion-free group with infinitely many commensurability classes cannot have $b\CYC$. The following shows that a converse to \Cref{primitiveBVC} does not hold: 

\begin{prop}
There exists a finitely generated torsion-free group $G$ without primitive elements that does not have $b\CYC$.
\end{prop}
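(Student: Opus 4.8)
The plan is to produce the group in two moves: first build a \emph{countable} torsion-free group $H$ which has no primitive elements but infinitely many commensurability classes, and then promote it to a finitely generated example via \Cref{embedding-conjugacy-theorem}. The reduction is justified by the remark preceding the statement: a torsion-free group with infinitely many commensurability classes cannot have $b\CYC$. Indeed, if $\{C_1,\dots,C_n\}$ with $C_i=\langle c_i\rangle$ were a witness, then every $\langle g\rangle$ would be conjugate into some $C_i$, so $g$ would be conjugate to a power of $c_i$ and hence commensurable with $c_i$; as the group is torsion-free this would leave at most $n$ commensurability classes. So the entire problem is to realize both properties at once.

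To build $H$ I would start from the free group $H_0 = F(x_1,x_2,\dots)$ on countably many generators and, at each stage, adjoin a square root to every non-trivial element, setting $H_{n+1} = \langle H_n, \{s_g\}_{g \in H_n\setminus\{1\}} \mid s_g^2 = g\rangle$. This is the fundamental group of a graph of groups with central vertex $H_n$ and one $\BZ$-vertex $\langle s_g\rangle$ for each $g$, the edge group $\langle g\rangle\cong\BZ$ embedding as the index-two subgroup $\langle s_g^2\rangle$. Since in such a tree of groups every torsion element is conjugate into a vertex group, and all vertex groups are torsion-free, each $H_n$ and hence $H=\bigcup_n H_n$ is torsion-free. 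Moreover any non-trivial $w\in H$ lies in some $H_n$ and equals $s_w^2$ in $H_{n+1}\subseteq H$, so $w$ is a proper power; thus $H$ has no primitive elements.

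The delicate point is keeping infinitely many commensurability classes alive, and here any single-integer or single-rational invariant is useless: once every element is a square the abelianization is $2$-divisible, so a homomorphism $\phi$ to $\BZ$ or $\BQ$ collapses commensurability, because $g^k\sim h^l$ forces $k\phi(g)=l\phi(h)$ and over $\BQ$ this ratio is always rational. The remedy is to record a \emph{direction} instead of a number: choose pairwise non-parallel vectors $v_i=(1,i)\in\BZ^2$ and define $\phi_0\colon H_0\to\BQ^2$ by $x_i\mapsto v_i$. Because $\BQ^2$ is uniquely $2$-divisible, $\phi_n$ extends to $\phi_{n+1}$ via $\phi_{n+1}(s_g)=\tfrac12\phi_n(g)$, giving a conjugation-invariant homomorphism $\phi\colon H\to\BQ^2$. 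If $x_i^k$ were conjugate to $x_j^l$ with $k,l\neq 0$, then $k v_i=l v_j$, which is impossible for $i\neq j$; hence the $x_i$ are pairwise non-commensurable and $H$ has infinitely many commensurability classes.

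Finally I would apply \Cref{embedding-conjugacy-theorem} to the countable torsion-free group $H$ to obtain a $2$-generated torsion-free group $G\supseteq H$. Property (a) makes every element of $G$ conjugate to an element of $H$, which is a proper power in $H$, so every element of $G$ is a proper power and $G$ has no primitive elements. Property (c) shows that $x_i^k$ and $x_j^l$, being non-conjugate in $H$ (as $\phi$ separates them), remain non-conjugate in $G$, so the $x_i$ stay pairwise non-commensurable in $G$; thus $G$ has infinitely many commensurability classes and cannot have $b\CYC$. The main obstacle is the commensurability bookkeeping across the root-adjoining step, and this is precisely what the $\BQ^2$-valued invariant is engineered to control; the remaining ingredients are the standard torsion-freeness of graph-of-groups constructions and a direct invocation of \Cref{embedding-conjugacy-theorem}.
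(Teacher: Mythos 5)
Your proof is correct, but it takes a genuinely different route from the paper. The paper also reduces to producing a countable torsion-free group with no primitive elements but infinitely many commensurability classes and then invokes \Cref{embedding-conjugacy-theorem}, exactly as you do; the difference is in how that countable group is built. The paper starts from $F_2$ and iteratively forms multiple HNN extensions $g_i^{t_i} = g_i^2$, so that every element becomes conjugate to its own square (hence a proper power of a conjugate of itself), and it preserves non-commensurability by repeated appeal to \Cref{hnn-conjugacy-primitivity}, i.e.\ by Britton's-lemma-type normal form arguments. You instead adjoin honest square roots via a star-shaped graph of groups and kill the conjugacy bookkeeping entirely by means of an abelian invariant: a homomorphism to the uniquely $2$-divisible group $\BQ^2$ sending the generators to pairwise non-parallel vectors, which obstructs $x_i^k \sim x_j^l$ no matter what conjugacies the amalgamations create. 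Your observation that a $\BQ$-valued invariant cannot separate commensurability classes, so that the rank-two target is essential, is exactly the right point. What each approach buys: the paper's stays inside the HNN-extension toolkit it uses throughout Section 4 and needs no Bass--Serre input beyond Britton's lemma; yours trades the combinatorial conjugacy analysis for Bass--Serre torsion-freeness of graphs of groups plus a homomorphism computation, which is arguably more robust since the invariant works unconditionally. Both are valid; there is no gap in your argument.
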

\begin{proof}
We first prove that there exists a torsion-free countable group without primitive elements and infinitely many commensurability classes. We start by an inductive procedure, letting $G_0 = F_2$ be the non-abelian free group on two generators. In fact, any countable torsion-free group with infinitely many commensurability classes of elements would work as well. Suppose $G_n$ has been constructed, enumerate all non-trivial elements $G_n \setminus \{1\} = \{g_1, g_2, \ldots \}$ and define $G_{n+1}$ as the following multiple HNN extension:
$$
G_{n+1} = \langle G_n, \{t_i\}_{i \in \BN} \mid g_i^{t_i} = g_i^2 \rangle
$$
Suppose $x,y \in G_n \setminus \{ 1\}$ are not commensurable. For any $i \in \BN$ and for any $k, l \in \BZ \setminus \{0\}$ it follows that $x^k$ is not conjugate to an element of $\langle g_i \rangle$ or $y^l$ is not conjugate to an element of $\langle g_i \rangle$. By \Cref{hnn-conjugacy-primitivity} it follows that $x$ and $y$ are not commensurable in $G_{n+1}$. Letting $G$ be the direct limit of the $G_n$, we have that there are infinitely many commensurability classes in $G$ and any non-trivial element of $G$ can be written as a proper power of a conjugate of itself. To obtain a finitely generated example, apply \Cref{embedding-conjugacy-theorem}.
\end{proof}

\begin{lem} \label{countable-group-bcyc-commensurability}
There exists a finitely generated torsion-free group $G$ without $b\CYC$ and exactly two commensurability classes.
\end{lem}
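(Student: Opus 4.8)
The plan is to build a countable torsion-free group $H$ with exactly two commensurability classes that nonetheless has infinitely many conjugacy classes of primitive elements, and then to upgrade it to a finitely generated example via \Cref{embedding-conjugacy-theorem}. The whole difficulty lies in a tension between the two requirements: collapsing every nontrivial element into a single commensurability class pulls against keeping infinitely many primitive conjugacy classes. The key point that resolves this is that one may force a \emph{power} of an element to become conjugate to a power of a fixed element without making the elements themselves conjugate, and for primitive elements this is exactly what the Hu--Osin lemmas guarantee.

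For the countable model I would start with $H_0 = F_2 = \langle a, b \rangle$, fix $x = a$, and note that the elements $ab^n$ for $n \geq 1$ are primitive and pairwise non-conjugate in $F_2$. I then construct $H_{n+1}$ from $H_n$ as a multiple HNN extension: enumerating $H_n \setminus \{1\} = \{ g_1, g_2, \ldots \}$, set
$$
H_{n+1} = \langle H_n, \{ t_i \}_{i \in \BN} \mid (g_i^2)^{t_i} = x^2 \rangle ,
$$
realized as an increasing union of single HNN extensions. Since both exponents are $\geq 2$, \Cref{hnn-extension-primitive-positive-powers} ensures that every primitive element of $H_n$ stays primitive in $H_{n+1}$, while \Cref{hnn-conjugacy-primitivity}(a) ensures that non-conjugate elements remain non-conjugate: a primitive element is never conjugate into $\langle g_i^2 \rangle \cup \langle x^2 \rangle$, whose nontrivial members are all proper powers. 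Putting $H = \bigcup_n H_n$, each $H_n$ is torsion-free (HNN extensions of torsion-free groups are torsion-free), hence so is $H$.

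I would then check the three properties of $H$. First, any nontrivial $g \in H$ lies in some $H_n$ and is processed at the step $H_n \to H_{n+1}$, where $g^2$ becomes conjugate to $x^2$; thus all nontrivial elements are commensurable with $x$, so $H$ has exactly two commensurability classes (the identity forming the other). Second, the elements $ab^n$ with $n \geq 1$ persist as primitive, pairwise non-conjugate elements through every extension by the preservation statements above, so $H$ has infinitely many conjugacy classes of primitive elements; by \Cref{primitiveBVC} this already shows $H$ fails $b\CYC$. Crucially, although the squares $(ab^n)^2$ all become conjugate to $x^2$ (hence to one another), the elements $ab^n$ themselves stay pairwise non-conjugate.

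Finally I would apply \Cref{embedding-conjugacy-theorem} to the countable torsion-free group $H$, obtaining a $2$-generated torsion-free group $G \supseteq H$ in which every element is conjugate to an element of $H$ (property (a)), primitivity of elements of $H$ is preserved (property (b)), and non-conjugacy in $H$ is preserved (property (c)). The images of $ab^n$ then remain primitive and pairwise non-conjugate in $G$, so $G$ has infinitely many primitive conjugacy classes and hence no $b\CYC$ by \Cref{primitiveBVC}. For the commensurability count, given nontrivial $g_1, g_2 \in G$, property (a) conjugates them to nontrivial $h_1, h_2 \in H$, and commensurability of $h_1, h_2$ in $H$ gives $k, l \neq 0$ with $g_1^k \sim h_1^k \sim h_2^l \sim g_2^l$ in $G$; thus all nontrivial elements of $G$ are commensurable and $G$ has exactly two commensurability classes. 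The main obstacle is Step~1 --- arranging the commensurability-collapsing relations so they do not also collapse the primitive conjugacy classes --- and this is delivered precisely by the exponent-$\geq 2$ power relations together with \Cref{hnn-extension-primitive-positive-powers}.
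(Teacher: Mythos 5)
Your proposal is correct and follows essentially the same route as the paper: iterated multiple HNN extensions over $F_2$ that conjugate proper powers onto $\langle b^2\rangle$ (the paper relates each non-primitive $g_i$ directly to $b^2$, you relate $g_i^2$ to $x^2$ for all $g_i$ --- an immaterial variation), with \Cref{hnn-extension-primitive-positive-powers} and \Cref{hnn-conjugacy-primitivity} preserving the infinitely many primitive conjugacy classes, and \Cref{embedding-conjugacy-theorem} supplying the finitely generated example. No gaps.
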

\begin{proof}
Let $G_0 = \langle a,b \rangle$ be a free group of rank two. Suppose $G_n$ has been constructed, then enumerate all elements $\{ g_1, g_2, \ldots \}$ of $G_n \setminus \{ 1 \}$ that are not primitive. We form the multiple HNN extension
$$
G_{n+1} = \langle G_n, \{ t_i \}_{i \in \BN} \mid g_i^{t_i} = b^2 \rangle
$$
By \Cref{hnn-extension-primitive-positive-powers} any primitive element of $G_n$ stays primitive in $G_{n+1}$. Similarly, also non-conjugate primitive elements $g,h \in G_n$ will stay non-conjugate in $G_{n+1}$ by \Cref{hnn-conjugacy-primitivity}. If we let $G$ be the direct limit of the $G_n$, then $G$ contains infinitely many primitive conjugacy classes, thus $G$ fails to have $b\CYC$. However, given any non-trivial element $g \in G$, by construction $g^2$ will be conjugate to $b^2$. Thus there are precisely two commensurability classes. Finally, to obtain a finitely generated group with the same properties, apply \Cref{embedding-conjugacy-theorem}.
\end{proof}

\begin{lem}\label{powers-and-centralizers}
Let $G$ be a group such that centralizers of all non-trivial elements are infinite cyclic. Let $a, b \in G$ and suppose that $a^n = b^m$ for some $m,n \neq 0$. If $a$ is primitive, then $b \in \langle a \rangle$.
\end{lem}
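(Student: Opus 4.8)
The plan is to route everything through the common power $z := a^n = b^m$, using the fact that the centralizer hypothesis applies to $z$. Since $a$ is primitive it cannot be trivial (indeed a primitive element has infinite order), and because $n \neq 0$ this forces $z = a^n \neq 1$. Thus $z$ is a non-trivial element of $G$, and by hypothesis its centralizer $C_G(z)$ is infinite cyclic; write $C_G(z) = \langle c \rangle$.

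The key observation is that both $a$ and $b$ lie in this centralizer. Indeed, $a$ commutes with $a^n = z$, so $a \in C_G(z)$, and $b$ commutes with $b^m = z$, so $b \in C_G(z)$ as well. This is the crux: the single relation $a^n = b^m$ places $a$ and $b$ inside one infinite cyclic subgroup.

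Now I would use primitivity to pin down $a$ as a generator of $C_G(z)$. Writing $a = c^k$ for some $k \in \BZ$, we have $k \neq 0$ since $a \neq 1$, and $|k| \geq 2$ would exhibit $a = c^k$ as a proper power, contradicting that $a$ is primitive. Hence $k = \pm 1$ and $\langle a \rangle = \langle c \rangle = C_G(z)$. Finally, since $b \in C_G(z) = \langle a \rangle$, we conclude $b \in \langle a \rangle$, as desired.

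This argument is essentially a direct unwinding of the definitions, so there is no serious obstacle; the only step requiring any care is confirming at the outset that $z \neq 1$, which is exactly what legitimizes invoking the hypothesis that centralizers of \emph{non-trivial} elements are infinite cyclic. Everything else follows formally from the meaning of ``primitive'' together with the containments $a, b \in C_G(z)$.
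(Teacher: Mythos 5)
Your proof is correct and follows exactly the same route as the paper's: both pass to the centralizer $C_G(a^n)$, observe that $a$ and $b$ lie in this infinite cyclic group, and use primitivity of $a$ to conclude that $a$ generates it. Your explicit check that $a^n \neq 1$ (needed to invoke the hypothesis on non-trivial elements) is a small point the paper leaves implicit, but otherwise the arguments are identical.
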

\begin{proof}
Note that $C_G(a^n) = \langle x \rangle$ for some $x \in G$ and $a \in C_G(a^n)$ and $b \in C_G(a^n)$, thus $a = x^k$ for some $k$ and $b = x^l$ for some $l$. Since $a$ is primitive it follows that $k = \pm 1$. Hence $b \in \langle a \rangle$.
\end{proof}

In a hyperbolic group the centralizers of infinite order elements are virtually cyclic \cite[III.$\Gamma$.3.10]{BH}. In particular if $G$ is torsion-free hyperbolic and $g$ is a primitive element, then $C_G(\langle g \rangle) = N_G(\langle g \rangle) = \langle g \rangle$.

\begin{lem}\label{HNN-hyperbolic}
Let $G$ be a torsion-free hyperbolic group and let $a,b$ be two primitive elements such that $a$ is not conjugate to $b$. Then the HNN extension $G\ast_{a^t=b^k}$ is hyperbolic for any nonzero interger $k$. 
\end{lem}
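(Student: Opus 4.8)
The plan is to view $G\ast_{a^t=b^k}$ as a one-edge graph of groups with vertex group $G$ and infinite cyclic edge group, the two edge monomorphisms being the inclusions of $A=\langle a\rangle$ and $B=\langle b^k\rangle$ into $G$, glued by $a\mapsto b^k$, and then to invoke the Bestvina--Feighn combination theorem. Since $G$ is hyperbolic, every infinite cyclic subgroup is quasi-isometrically embedded and quasiconvex, so both $A$ and $B$ are quasiconvex edge groups, and the combination theorem reduces hyperbolicity of the HNN extension to the annuli flare condition, i.e.\ to the nonexistence of arbitrarily long thin essential annuli over the edge. Geometrically the only configurations that can violate this condition here are a flat $\BZ^2$ coming from a stable letter commuting with a conjugate of $a$, or a Baumslag--Solitar subgroup $\mathrm{BS}(1,\pm k)$ coming from a stable letter conjugating a conjugate of $a$ to a proper power of itself. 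So the crux is to exclude both.

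For the structural input I would use, since $G$ is torsion-free hyperbolic and $a$ is primitive, the fact recorded after \Cref{powers-and-centralizers} that $C_G(\langle a\rangle)=N_G(\langle a\rangle)=\langle a\rangle$, and more precisely that the maximal virtually cyclic subgroup $E_G(a)$ equals $\langle a\rangle$; the same holds for $b$. Consequently $\langle a\rangle$ is malnormal and maximal cyclic, and the only subgroup of $G$ normalising $B=\langle b^k\rangle$ is $\langle b\rangle$. Chasing each bad configuration back into $G$, it forces a conjugacy $a^p=h\,b^q h^{-1}$ with $p,q\neq0$. I would rule this out as follows: $a$ and $hbh^{-1}$ are then two primitive elements sharing the common power $a^p=(hbh^{-1})^q$, so both lie in the cyclic group $C_G(a^p)$; since $a$ is primitive this forces $\langle a\rangle=\langle hbh^{-1}\rangle$, whence $a$ is conjugate to $b^{\pm1}$. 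This contradicts the assumption that $\langle a\rangle$ and $\langle b\rangle$ are non-conjugate, equivalently that $a$ is conjugate to neither $b$ nor $b^{-1}$, which is the condition the non-conjugacy hypothesis is really providing here. One sees that this two-sided condition is genuinely needed: if $a\sim b^{-1}$, say $a=hb^{-1}h^{-1}$, then $t^{-1}at=b^k$ rewrites as $s^{-1}as=a^{-k}$ with $s=th^{-1}$, producing a $\mathrm{BS}(1,-k)$ subgroup; since $\lVert a^{(-k)^n}\rVert=k^n\lVert a\rVert$ grows exponentially while $\lvert a^{(-k)^n}\rvert$ grows at most linearly along this subgroup, $\langle a\rangle$ would be exponentially distorted, which is impossible in a hyperbolic group.

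The main obstacle is the failure of malnormality of $B=\langle b^k\rangle$ when $\lvert k\rvert\geq2$: its normaliser $\langle b\rangle$ strictly contains it, so the clean malnormal-edge-group form of the combination theorem does not apply verbatim and one must verify the flare condition by hand. The heart of that verification is exactly the computation above: any annulus threatening the flare condition has its $G$-part conjugating a power of $a$ into $\langle b\rangle$, and primitivity together with non-conjugacy forbids this. Once the flare condition is established, the combination theorem delivers hyperbolicity of $G\ast_{a^t=b^k}$ for every nonzero $k$, and no torsion is created since HNN extensions of torsion-free groups along torsion-free subgroups are torsion-free.
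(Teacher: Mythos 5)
Your argument is correct in substance but takes a genuinely different route from the paper. The paper invokes Kharlampovich--Myasnikov's criterion that a \emph{separated} HNN extension of a hyperbolic group along quasiconvex associated subgroups is hyperbolic, and checks separatedness directly: $\langle a\rangle$ is conjugate separated because a primitive element of a torsion-free hyperbolic group is its own maximal cyclic subgroup, and $\langle a\rangle\cap\langle b^k\rangle^{g}$ is finite because a relation $a^{n}=(g^{-1}bg)^{km}$ would contradict \Cref{powers-and-centralizers}. You instead go through the Bestvina--Feighn combination theorem and the annuli flare condition, reducing (via the standard ``no Baumslag--Solitar subgroup'' dichotomy for graphs of hyperbolic groups with quasiconvex cyclic edge groups) to the very same algebraic computation: primitivity plus cyclic centralizers forbids a power of $a$ from being conjugate to a power of $b$. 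The two criteria are close cousins; the paper's route is shorter because separatedness is checked entirely inside $G$, whereas your route rests on the (correct, but only informally justified in your write-up) claim that for cyclic edge groups the flare condition can only fail through a flat or a Baumslag--Solitar configuration.

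Your remark about the hypothesis is a genuine catch and worth recording. The computation only yields that $a$ is conjugate to $b^{\pm1}$, and ``$a$ not conjugate to $b$'' does not exclude $a\sim b^{-1}$: taking $G=F(x,y)$, $a=x$, $b=yx^{-1}y^{-1}$ and $k=1$, the element $s=ty$ satisfies $s^{-1}xs=x^{-1}$, so $G\ast_{a^{t}=b}$ contains a Klein bottle group, hence $\BZ^{2}$, and is not hyperbolic. The lemma therefore needs ``$a$ is not conjugate to $b^{\pm1}$'' (equivalently, $\langle a\rangle$ and $\langle b\rangle$ are not conjugate); the paper's own proof elides this by asserting that \Cref{powers-and-centralizers} gives ``$b$ is conjugate to $a$'' when it only gives conjugacy to $a^{\pm1}$. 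In the one place the lemma is used, in the proof of \Cref{bvc-extension-by-integers}, the stronger hypothesis is in fact available, since there $g_{n}$ is assumed not to be conjugate to \emph{any} element of $\langle a\rangle$ or $\langle d\rangle$.
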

\begin{proof}
By \cite[Corollary 1]{KhMy98}, we only need to show that the HNN extension $G\ast_{a^t=b^k}$ is separated. Recall that a subgroup $U$ of $G$ is called conjugate separated if the set $\{ u\in U\mid u^x \in U\}$ is finite for all $x \in G\setminus U$. And an HNN extension $G\ast_{\theta}$ for an isomorphism $\theta \colon U \rightarrow V$ is called separated if either $U$ or $V$ is conjugate separated, and the set $U \cap V^g$ is finite for all $g\in G$.  Now $\langle a\rangle$ is conjugate separated since $C_G(\langle a \rangle) = N_G(\langle a \rangle) = \langle a \rangle$. And if $\langle a\rangle \cap \langle g^{-1}b^kg\rangle$ was non-empty, then $a^n = (g^{-1}bg)^{km}$ for some $m,n\neq 0$. By \Cref{powers-and-centralizers} it follows that $b$ is conjugate to $a$ which contradicts our assumptions.
\end{proof}

\begin{prop} \label{bvc-extension-by-integers}
There exists a countable torsion-free group $G$ and an epimorphism $\varepsilon \colon G \to \BZ$ such that $G$ has $b\CYC$ but $\ker(\varepsilon)$ does not.
\end{prop}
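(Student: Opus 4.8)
The plan is to realise $G$ as the direct limit of a chain $G_0 \subgroup G_1 \subgroup \cdots$ of countable torsion-free groups carrying compatible epimorphisms $\varepsilon_n \colon G_n \to \BZ$, arranged so that in the limit every element of $G$ is conjugate to a power of one of two fixed elements, while $\ker(\varepsilon)$ retains infinitely many primitive conjugacy classes. I start from the free group $G_0 = \langle a, b \rangle$ of rank two with $\varepsilon_0(a) = 1$ and $\varepsilon_0(b) = 0$. The kernel of $\varepsilon_0$ is free with basis $\{ b_m := a^{-m} b a^m \mid m \in \BZ \}$, and these elements are primitive and pairwise non-conjugate in $\ker(\varepsilon_0)$, even though they are all conjugate to $b$ inside $G_0$. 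This family $\{b_m\}$ is my designated \emph{wild} family: I will ensure that $G$ has $b\CYC$ with witnesses $\{a,b\}$, while the $b_m$ witness the failure of $b\CYC$ for $\ker(\varepsilon)$ via \Cref{primitiveBVC}.

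At stage $n$ I enumerate the elements of $G_n$, and for each $x$ not yet conjugate to a power of $a$ or of $b$ I adjoin a stable letter forcing such a conjugacy: if $\varepsilon_n(x) = k \neq 0$ I form an HNN extension realising $x \sim a^k$, and if $x$ lies in the kernel I form an HNN extension realising $x \sim b^2$, always collapsing kernel elements onto a power of $b$ of exponent at least two. Passing to the direct limit, every element of $G$ becomes conjugate to a power of $a$ or of $b$, so $G$ has $b\CYC$ by \Cref{torsionfreeBVC} (torsion-freeness is preserved at each step, since HNN extensions of torsion-free groups are torsion-free). Throughout I maintain the inductive invariants that $b$, and hence every $b_m$, is primitive in $G_n$ and not conjugate to a proper power of itself, and, crucially, that the $b_m$ remain primitive and pairwise non-conjugate inside $\ker(\varepsilon_n)$.

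The survival of these invariants rests on repeated use of \Cref{hnn-conjugacy-primitivity}. For a collapse aimed at $a^k$ the protection is immediate: since $\varepsilon$ is a conjugacy invariant and every $b_m$ lies in the kernel, no $b_m$ is conjugate to a nontrivial power of $x$ or of $a^k$, so the associated cyclic subgroups are avoided. For a collapse aimed at $b^2$ the point is that a collapse is performed only on elements $x$ not already conjugate to a power of $b$; combining this with the primitivity of $b$ and the $b_m$ and with the exponent being two rather than one, one checks that $b$ and each $b_m$ avoid (the conjugates of) the associated subgroups $\langle x \rangle$ and $\langle b^2 \rangle$, so \Cref{hnn-conjugacy-primitivity} preserves primitivity and the relevant non-conjugacies. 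The subtle part is the kernel statement: because all the $b_m$ are conjugate in $G_n$, their distinctness inside $\ker(\varepsilon_n)$ cannot be detected at the level of $G_n$, so one must examine the kernel directly. Choosing the $\varepsilon$-values of the stable letters so that $\ker(\varepsilon_{n+1})$ is itself a multiple HNN extension of $\ker(\varepsilon_n)$, whose associated subgroups are the $a$-conjugates of $\langle x \rangle$ and $\langle b^2 \rangle$, lets me apply \Cref{hnn-conjugacy-primitivity} a second time, now inside the kernel, to conclude that the $b_m$ stay primitive and pairwise non-conjugate in $\ker(\varepsilon_{n+1})$.

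The main obstacle is exactly this last interplay: the construction must collapse $G$ enough to obtain $b\CYC$ and yet prevent the collapsing relations from merging or de-primitivising the wild family inside the kernel, and these two demands pull in opposite directions, since the $b_m$ are all conjugate in $G$ and thus invisible to any $G$-level or abelianised invariant. Indeed, a naive abelianisation argument fails, as a collapse such as $b_0 b_1 \sim b^2$ would force $b_0$ and $b_1$ to agree in $H_1$; the distinctness of the $b_m$ must instead be read off from the HNN structure of the kernel via Britton's lemma. The resolution is careful bookkeeping of the $\varepsilon$-values of the stable letters, guaranteeing at every stage that the kernel remains an HNN extension of the previous kernel to which \Cref{hnn-conjugacy-primitivity} applies with the wild family avoiding all associated subgroups. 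Granting this, the group $G = \bigcup_n G_n$ together with the induced epimorphism $\varepsilon$ has $b\CYC$, while $\ker(\varepsilon) \supseteq \{ b_m \}$ has infinitely many primitive conjugacy classes and therefore fails $b\CYC$ by \Cref{primitiveBVC}, as required.
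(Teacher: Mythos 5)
Your argument is correct in its essentials, but it takes a genuinely different route from the paper's. The paper starts from $F(a,c,d)$, adds one stable letter at a time while keeping every intermediate group hyperbolic (via \Cref{HNN-hyperbolic}), collapses primitive kernel elements onto $d$ itself, and then controls conjugators between wild kernel elements by two pieces of hyperbolic bookkeeping: a Britton's‑lemma argument bounding the $t_n$-length of any conjugator by one, and a rapidly growing choice $\varepsilon(t_n) > |\varepsilon_n(\alpha_n)| + 4\varepsilon(t_{n-1})$ forcing any such conjugator to have nonzero $\varepsilon$-value, hence to lie outside the kernel. You instead send the stable letters to $0 \in \BZ$, so that $\ker(\varepsilon_{n+1})$ decomposes as a multiple HNN extension of $\ker(\varepsilon_n)$ with stable letters $a^{-m}ta^{m}$ and associated subgroups $\langle a^{-m}xa^{m}\rangle$, $\langle b_m^2\rangle$, and you run \Cref{hnn-conjugacy-primitivity} a second time inside the kernel; the exponent $2$ in $x^t = b^2$ is exactly what keeps the wild family $\{b_m\}$ out of the associated subgroups (out of $\langle b_m^2\rangle$ by primitivity, out of $\langle a^{-m}xa^m\rangle$ because $x$ is never conjugate to a power of $b$). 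What this buys is a more elementary proof: no hyperbolicity of the intermediate groups is needed, so you may collapse arbitrary elements rather than only primitive ones and avoid the root-extraction step, and no quantitative estimates on $\varepsilon$ of conjugators are required. The one step you assert rather than prove, and which carries all the weight, is the identification of $\ker(\varepsilon_{n+1})$ as that multiple HNN extension of $\ker(\varepsilon_n)$ (and, when $\varepsilon_n(x)=k\neq 0$, as a free product of $\ker(\varepsilon_n)$ with a free group of rank $|k|$); this is a standard Bass--Serre or Reidemeister--Schreier computation using $\varepsilon(t)=0$ and the surjectivity of $\varepsilon_n$ on $G_n$, but it must be written out, since the entire non-conjugacy claim for the $b_m$ in the kernel rests on it. With that supplied, your induction closes and the proof is complete.
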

\begin{proof}
Let $G_0 = \langle a,c,d \rangle$, and $\varepsilon_0 \colon G_0 \to \BZ$ be defined by mapping $a$ to $1$ and the other free generators to $0$. Moreover, choose a bijection $\phi_0 \colon \BN_0 \to G_0 \setminus \{ 1 \}$. 

For each $n > 0$ we construct a countable torsion-free group $G_n$, an epimorphism $\varepsilon_n \colon G_n \to \BZ$ and choose a bijection $\phi_n \colon \BN_0 \to G_n \setminus  \{ 1 \}$ such that 

\begin{enumerate}
    \item $G_n$ is either an HNN extension of $G_{n-1}$ through the stable letter $t_n$ or equals $G_{n-1}$. Moreover, $G_n$ is a hyperbolic group.
    \item $\varepsilon_n|_{G_{n-1}} = \varepsilon_{n-1}$ 
    \item Any primitive element $x \in \ker(\varepsilon_{n-1})$ is primitive as an element of $G_n$.
    \item If two primitive elements $x,y \in \ker(\varepsilon_0)$ are conjugate in $G_n$, then the $t_{n-1}$-length of $\omega$ is at most one.
\end{enumerate}

Furthermore we choose the bijection $\phi \colon \BN_0 \to \BN_0 \times \BN_0$ which enumerates the elements of $\BN_0 \times \BN_0$ diagonally, i.e. $\{ (0,0), (1,0), (0,1), (0,2), (1,1), \ldots \}$.

Now, suppose $G_n$ has been constructed. Let $(i,j) = \phi(n)$ and let $g_n = \phi_i(j) \in G_n$. In $G_n$, if $g_n$ is not primitive, or if it is conjugate to an element of $\langle a \rangle$ or $\langle d \rangle$, then set $G_{n+1} = G_n$, $\varepsilon_{n+1} = \varepsilon_n$ and $\phi_{n+1} = \phi_n$. Otherwise we construct $G_{n+1}$ as an HNN extension depending on the value of $\varepsilon_n(g)$ as follows:

\begin{enumerate}[label=(\roman*)]
    \item If $\varepsilon_n(g_n) \neq 0$, we set
        $$
        G_{n+1} = \langle G_n, t_n \mid g_n^{t_n} = a^{\varepsilon_n(g_n)} \rangle.
        $$
    \item If $g_n \in \ker(\varepsilon_n)$, we define
        $$
        G_{n+1} = \langle G_n, t_n \mid g_n^{t_n} = d \rangle.
        $$
\end{enumerate}

Note that in both cases \Cref{HNN-hyperbolic} applies and thus $G_n$ is hyperbolic. 

If $g_n$ is conjugate in $G_n$ to an element of $\langle c,d \rangle$, say $g_n = \alpha_n^{-1} x_n \alpha_n$ for some $x_n \in \langle c,d \rangle$, we define $\varepsilon_{n+1} \colon G_{n+1} \to \BZ$ by $t_n \mapsto |\varepsilon_{n}(\alpha_n)| + 4 \varepsilon(t_{n-1}) + 1$, otherwise we let $t_n \mapsto \varepsilon_n(t_{n-1}) + 1$. Here, we interpret $\varepsilon(t_{-1}) = 1$. Note that $\alpha_n$ and $x_n$ are not unique here, but we fix our choices for each such $g_n$.

Furthermore we choose a bijection $\phi_{n+1} \colon \BN_0 \to G_{n+1}\setminus \{ 1 \}$.

\textbf{Proof of (c).} Let $x \in \ker(\varepsilon_n)$ be primitive.
The claim in case (i) follows directly from \Cref{hnn-conjugacy-primitivity} since $x$ is not conjugate to any element in $\langle g_n \rangle \cup \langle a \rangle$. In case (ii) we can apply \Cref{hnn-extension-primitive} since since $g_n$ is primitive in $G_n$ by assumption and $d \in \ker(\varepsilon_0)$ is primitive in $G_n$ by induction.

\textbf{Proof of (d).} Let $x,y \in \ker(\varepsilon_{0}) \subgroup G_{n+1}$ be primitive and let $\omega \in G_{n+1}$ be a reduced word such that $\omega^{-1} x \omega = y$. If $\omega$ contains no $t_n$ or $t_n^{-1}$, then we are certainly done, thus we can moreover assume that we are in case (ii). Suppose the $t_n$-length of $\omega$ is at least two, then we can write $\omega = \omega_1 t_n^{\pm 1} \omega_2 t_n^{\pm 1} \omega_3$ as a reduced expression, where $\omega_2 \in G_n$ and $\omega_1, \omega_3 \in G_{n+1}$. 

If $\omega = \omega_1 t_n \omega_2 t_n \omega_3$ we know that $t_n^{-1} \omega_1^{-1} x \omega_1 t_n$ has to be a pinch, so $\omega_1^{-1} x \omega_1 \in \langle g_n \rangle$, i.e. $\omega_1^{-1} x \omega_1 = g_n^k$. Since $x$ is primitive as an element of $G_{n+1}$, it follows that $k = \pm 1$. But we also know that $t_n^{-1} \omega_2^{-1} d^k \omega_2 t_n$ has to be a pinch, thus $\omega_2^{-1} d^k \omega_2 = g_n^{m}$ where $m = \pm 1$. Since $\omega_2 \in G_n$, this is impossible by our choice of $g_n$. Similarly the case that $\omega = \omega_1 t_n^{-1} \omega_2 t_n^{-1} \omega_3$ is impossible.

If $\omega$ contains two adjacent stable letters whose exponents are different, we first consider the case that $\omega = \omega_1 t_n \omega_2 t_n^{-1} \omega_3$. Then
$t_n^{-1} \omega_1^{-1} x \omega_1 t_n$ has to be a pinch, thus $\omega_1^{-1} x \omega_1 = g_n^k$ with $k = \pm 1$, so that
$t_n^{-1} g_n^k t_n = d^k$. Now we also know that $t_n \omega_2^{-1} d^k \omega_2 t_n^{-1}$ has to be a pinch, thus $\omega_2^{-1} d^k \omega_2 \in \langle d \rangle$, so $\omega_2 \in \langle d \rangle$ since $G_n$ is hyperbolic. But then $\omega$ was not a reduced word to begin with. Thus we have shown that the reduced element $\omega$ has $t_n$-length at most one. If $\omega = \omega_1 t_n^{-1} \omega_2 t_n \omega_3$ then an analogous argument applies since $\langle g_n \rangle$ is self-normalizing as well since $g_n$ is primitive in case (ii).

We now define $G$ as the direct limit of the $G_n$. Note that the $\varepsilon_n$ induce an epimorphism $\varepsilon \colon G \to \BZ$.

\textbf{G has $b\CYC$.} Let $g \in G$ be a non-trivial element. We claim that it is conjugate to an element in $\langle a \rangle \cup \langle d\rangle $. We can find $n$ such that $g\in G_n$. Since $G_n$ is hyperbolic we can find a primitive element $h$ in $G_n$ such that $g$ is some power of $h$. If $h$ is conjugate to an element of $\langle a \rangle \cup \langle d\rangle$ or if $h$ is conjugate to $g_{n+1}$ in $G_{n+1}$ we are done. Otherwise, $h$ remains primitive in $G_{n+1}$ by \Cref{hnn-conjugacy-primitivity}. On the other hand, our enumeration function $\phi$ and the construction guarantees that in the end any primivite element will be conjugate to an element in $\langle a \rangle \cup \langle d\rangle$. 

\textbf{$\ker(\varepsilon)$ does not have $b\CYC$.}  
We first prove that if $x, y$ are two primitive elements in $\langle c, d \rangle \subgroup \ker(\varepsilon_0)$ and there is some $\omega \in G_{n+1}$ such that $\omega^{-1} x \omega = y$, then $|\varepsilon(\omega)| \leq 2 \varepsilon(t_n)$, where we interpret $\varepsilon(t_{-1}) = 1$ as above. Let $n = -1$, and note that the conjugating element $\omega$ lies in $\langle c,d \rangle$ since $G_0$ is free. In particular, $\varepsilon(\omega) = 0 \leq 2 \varepsilon(t_{-1})$.
Now suppose $n \geq 0$ and let $\omega \in G_{n+1}$ such that $\omega^{-1} x \omega = y$. By (d) it follows that the $t_n$-length of $\omega$ is at most one. If the $t_n$-length equals zero, we are done by induction since $\varepsilon(t_{n-1}) < \varepsilon(t_n)$. If the $t_n$-length is non-zero we can assume without loss of generality that $\omega = \omega_1 t_n \omega_2$ where $\omega_1$ and $\omega_2$ are elements in $G_n$. Since $\omega^{-1} x \omega = y$, it follows that $\omega_1^{-1} x \omega_1 \in \langle g_n \rangle$ and similarly $\omega_2^{-1} d^{\pm 1} \omega_2 = y$. Here, $g_n$ is the element in the kernel of $\varepsilon_n$ that was used to construct $G_{n+1}$ from $G_n$ via an HNN extension. 

Suppose $\omega_1^{-1} x \omega_1 = g_n^{\pm 1}$. Then we also know that $g_n = \alpha_n^{-1} x_n \alpha_n$, thus by induction we know that $|\varepsilon(\omega_1 \alpha_n^{-1})| \leq 2 \varepsilon(t_{n-1})$  since $x^{\omega_1 \alpha_n^{-1}} = x_n^{\pm 1}$, so $|\varepsilon(\omega_1)| \leq |\varepsilon(\alpha_n)| + 2 \varepsilon(t_{n-1})$. Moreover, by induction we also conclude that $|\varepsilon(\omega_2)| \leq 2 \varepsilon(t_{n-1})$. Altogether we obtain
\begin{align*}
|\varepsilon(\omega)| &\leq |\varepsilon(\omega_1)| + \varepsilon(t_n) + |\varepsilon(\omega_2)| \\
& \leq 2 \varepsilon(t_n) \, .
\end{align*}

We are now ready to show that $\ker(\varepsilon)$ contains infinitely many primitive conjugacy classes. Let $x,y$ be two primitive elements in subgroup $\langle c,d \rangle \subgroup \ker(\varepsilon_0)$ such that $x$ is not conjugate to $y$ in $\ker(\varepsilon_0)$. Suppose $x$ and $y$ are conjugate via some $\omega \in \ker(\varepsilon)$, i.e. $\omega^{-1} x \omega = y$. There is some $n \in \BN$ such that $x,y,\omega \in G_{n+1}$. As in the proof above we can assume without loss of generality that $\omega = \omega_1 t_n \omega_2$ with $\omega_1, \omega_2 \in G_n$ and $\omega_1^{-1} x \omega_1 = g_n^{\pm 1}$, $\omega_2 d^{\pm 1} \omega_2 = y$ such that $|\varepsilon(\omega_2)| \leq 2 \varepsilon(t_{n-1})$ and $|\varepsilon(\omega_1)| \leq |\varepsilon(\alpha_n)| + 2 \varepsilon(t_{n-1})$. But since we chose $\varepsilon(t_n) > |\varepsilon_{n}(\alpha_n)| + 4 \varepsilon(t_{n-1})$, we see that $\omega$ cannot lie in the kernel of $\varepsilon$.

\end{proof}

\begin{thm} \label{bvc-fg-extension-by-integers}
There exists a finitely generated torsion-free group $G = H \rtimes \BZ$ such that $G$ has $b\CYC$ but $H$ does not. 
\end{thm}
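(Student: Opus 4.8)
The plan is to upgrade the countable example of \Cref{bvc-extension-by-integers} to a finitely generated one. First I would record that the semidirect product structure comes for free: if $\varepsilon \colon G \to \BZ$ is any epimorphism then, $\BZ$ being free, $\varepsilon$ splits and hence $G = \ker(\varepsilon) \rtimes \BZ$. So it suffices to produce a \emph{finitely generated} torsion-free group $G$ with an epimorphism $\varepsilon \colon G \to \BZ$ such that $G$ has $b\CYC$ while $H := \ker(\varepsilon)$ does not.

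Starting from the countable group $G_0$ and $\varepsilon_0 \colon G_0 \to \BZ$ of \Cref{bvc-extension-by-integers}, my approach is to embed $G_0$ into a two-generated group by running an $\varepsilon$-equivariant version of the construction in \Cref{embedding-conjugacy-theorem}. Concretely, I would repeat that construction with input $G_0$, but carry the homomorphism to $\BZ$ along at every stage. The two free generators $p,q$ adjoined at the start are given values (for instance $\varepsilon(p)=1$, $\varepsilon(q)=0$) so that $\langle p,q\rangle$ already surjects onto $\BZ$. Each absorbing HNN relation $e^t=\iota(e)$ is made $\varepsilon$-consistent: since $\varepsilon$ is invariant under conjugation one only needs $\varepsilon(e)=\varepsilon(\iota(e))$, which can be arranged by choosing the target cyclic subgroup with the matching value and then setting $\varepsilon(t)$ freely. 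Finally, the small-cancellation relations provided by the Osin-type embedding theorem of \cite{HuOs} are chosen to be $\varepsilon$-homogeneous, which is possible precisely because $\langle p,q\rangle$ realizes every value of $\BZ$. The direct limit is then a two-generated torsion-free group $G \supseteq G_0$ equipped with an epimorphism $\varepsilon \colon G \to \BZ$ extending $\varepsilon_0$ and still satisfying properties (1)--(3) of \Cref{embedding-conjugacy-theorem}.

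Granting such a $G$, the two required properties follow the established template. By property (1) every element of $G$ is conjugate into $G_0$, and $G_0$ has $b\CYC$ with witness $\{\langle a\rangle,\langle d\rangle\}$; hence every cyclic subgroup of $G$ is conjugate into $\langle a\rangle$ or $\langle d\rangle$, so $G$ has $b\CYC$. For the kernel, I would use the primitive elements $x,y\in\langle c,d\rangle \subgroup \ker(\varepsilon_0)$ that are pairwise non-conjugate in $\ker(\varepsilon_0)$. By property (2) they stay primitive in $G$, and the obstruction argument from the end of the proof of \Cref{bvc-extension-by-integers}---namely that the values $\varepsilon(t_n)$ are chosen to grow so fast that any word conjugating two such elements must leave the kernel---shows they remain pairwise non-conjugate inside $H=\ker(\varepsilon)$. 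Thus $H$ has infinitely many primitive conjugacy classes and therefore fails $b\CYC$ by \Cref{primitive-conj-bCYC}.

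The main obstacle is exactly this equivariance. The unmodified embedding theorem \Cref{embedding-conjugacy-theorem} forces everything to be conjugate into a fixed small subgroup and so tends to collapse the abelianization; a priori it destroys every surjection onto $\BZ$, and the resulting group need not split as $H\rtimes\BZ$ at all. The real content is therefore to check that each finite-generation step---adjoining the free generators, absorbing elements by HNN extensions, and invoking the small-cancellation embedding---can be performed within one fixed $\BZ$-grading, so that $\varepsilon_0$ survives to the limit while properties (1)--(3) are preserved.
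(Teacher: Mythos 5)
Your proposal follows essentially the same route as the paper: start from the countable example of \Cref{bvc-extension-by-integers}, form $C * F(x,y)$, and run the relatively hyperbolic small-cancellation machinery of \cite{HuOs} while carrying the epimorphism to $\BZ$ through every HNN extension and quotient, which is exactly how the paper adapts the proof of \cite[Theorem 7.2]{HuOs}. The one point you should make explicit is in the kernel argument: your chosen primitives $x,y\in\langle c,d\rangle$ \emph{are} conjugate in $C$ (every nontrivial element of $C$ is conjugate into $\langle a\rangle\cup\langle d\rangle$), so property (3) of \Cref{embedding-conjugacy-theorem} is vacuous for them, and what you actually need is that \emph{every} $\omega\in G$ with $\omega^{-1}x\omega=y$ already lies in $C$ --- this follows from \Cref{malnormal-parabolics} because $C$ remains a parabolic subgroup of each torsion-free $G(i)$, and only then does the $\varepsilon$-growth estimate from \Cref{bvc-extension-by-integers} force $\omega\notin\ker(\varepsilon)$.
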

\begin{proof}
Let $C$ be a torsion-free countable group having $b\CYC$ and admitting an epimorphism $\varepsilon \colon C \to \BZ$ such that $\ker(\varepsilon)$ contains infinitely many conjugacy classes that are primitive in $C$, see \Cref{bvc-extension-by-integers}. 
Let
$$
G(0) = C * F(x,y).
$$
We extend $\varepsilon$ to $\alpha_0 \colon G(0) \to \BZ$ by mapping $x$ and $y$ to $1 \in \BZ$. Moreover, we enumerate the elements of $C = \{ c_0 = 1, c_1, c_2, \ldots \}$ and those of $G(0) = \{ g_0 = 1, g_1, g_2, \ldots \}$. Essentially the same argument as in the proof of \cite[Theorem 7.2]{HuOs} applies. There is a sequence of groups $G(i)$ and epimorphisms $\alpha_i \colon G(i) \to \BZ$ such that $G(i+1)$ is a quotient of $G(i)$ and $\alpha_i$ descends to an epimorphism $\alpha_{i+1} \colon G(i+1) \to \BZ$. Under the quotient maps the group $C$ embeds into $G(i)$ such that $G(i)$ is hyperbolic relative to $C$. Moreover, in $G(i)$ : (a) the elements $c_1, \ldots, c_i$ are contained in $\langle x, y \rangle$ and (b) the elements $g_1, \ldots, g_i$ are conjugate to elements of $C$. If we define $G$ as the direct limit of the $G(i)$, we obtain an induced epimorphism $\alpha \colon G \to \BZ$. By (a) $G$ is 2-generated and by (b) $G$ has $b\CYC$ since $C$ has $b\CYC$.
Since $G(i)$ is hyperbolic relative to $C$, if elements $c, c' \in C$ are conjugate in $G(i)$, i.e. $w c w^{-1} = c'$ for some $w \in G(i)$, then $w \in C$ by \Cref{malnormal-parabolics}. Thus \Cref{rel-hyperbolic-primitive} together with the previous observation imply that $\ker(\alpha)$ contains infinitely many primitive conjugacy classes.
\end{proof}

\begin{prop}
There exists a countable torsion-free group $G$ and an epimorphism $\varepsilon \colon G \to \BZ$ such that both $G$ and $\ker(\varepsilon)$ have $b\CYC$, and $\ker(\varepsilon)$ contains a non-abelian free subgroup.
\end{prop}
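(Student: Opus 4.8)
The plan is to sidestep the delicate inductive HNN constructions used in the preceding results and instead realize $G$ as a \emph{direct} product. The guiding observation is the contrast with \Cref{two-conjugacy-classes-semi-bz-no-bcyc}: a \emph{semidirect} product of a two-conjugacy-class group with $\BZ$ may fail to have $b\CYC$, but a direct product does not. The reason is that in a direct product $N \times \BZ$ the factor $\BZ$ is central, so conjugacy of elements of $N \times \{0\}$ is governed entirely by conjugacy in $N$; this makes the kernel inherit $b\CYC$ for free and reduces the whole problem to finding a suitable factor $N$.

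First I would produce the building block $N$. By Osin's embedding theorem \cite[Theorem 1.1]{Os}, the non-abelian free group $F_2$ embeds into a finitely generated torsion-free group $N$ in which any two elements of the same order are conjugate; since $N$ is torsion-free, the only orders occurring are $1$ and $\infty$, so $N$ has exactly two conjugacy classes. Fix a non-trivial $d \in N$. Every non-trivial element of $N$ is then conjugate to $d = d^{1}$, so by \Cref{torsionfreeBVC} the group $N$ has BVC, equivalently $b\CYC$ (these coincide for torsion-free groups), with the single witness $\langle d \rangle$. The image of $F_2$ is a non-abelian free subgroup of $N$.

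Next I would set $G = N \times \BZ$ with $\varepsilon \colon G \to \BZ$ the projection. Then $G$ is countable and torsion-free, $\varepsilon$ is surjective, and $\ker(\varepsilon) = N \times \{0\} \cong N$ has $b\CYC$ and contains the non-abelian free subgroup carried over from $N$. The only computation left is that $G$ itself has $b\CYC$. Since $\BZ$ is central, conjugation by $(n,j)$ sends $(x,k)$ to $(nxn^{-1},k)$, so $(x,k)$ and $(y,l)$ are conjugate in $G$ exactly when $k=l$ and $x,y$ are conjugate in $N$. Using that any two non-trivial elements of $N$ are conjugate and that $d^{k}\neq 1$ for $k\neq 0$, one checks that every element is conjugate to a power of one of $(d,0),(d,1),(1,1)$: indeed $(1,k)=(1,1)^{k}$; an element $(x,0)$ with $x\neq 1$ is conjugate to $(d,0)$; and an element $(x,k)$ with $x\neq 1$, $k\neq 0$ is conjugate to $(d^{k},k)=(d,1)^{k}$. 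Hence $\{\langle(d,0)\rangle,\langle(d,1)\rangle,\langle(1,1)\rangle\}$ is a witness and $G$ has $b\CYC$; this is exactly the instance of the remark preceding this proposition that a direct product of a torsion-free two-conjugacy-class group with $\BZ$ has $b\CYC$.

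The only genuine input, and the point I expect to require the most care to state precisely, is the existence of a torsion-free two-conjugacy-class group containing a non-abelian free subgroup. This follows from Osin's embedding theorem applied to $F_2$, the torsion-freeness forcing the conjugacy-class count down to two; everything else is formal, since the kernel is literally $N$ and the $b\CYC$ property of $G$ is the short conjugacy computation above.
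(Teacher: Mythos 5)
Your argument is correct, but it takes a genuinely different route from the paper. The paper constructs $G$ as the direct limit of an inductive sequence of multiple HNN extensions of the free group $\langle a,b,c\rangle$: at each stage every element with non-zero $\varepsilon$-value is forced to become conjugate to the corresponding power of $a$ and every non-trivial element of the kernel is forced to become conjugate to $b$, with all stable letters sent to $0\in\BZ$ so that $\ker(\varepsilon)$ acquires $b\CYC$ with the single witness $\langle b\rangle$ while $\langle b,c\rangle$ survives as a non-abelian free subgroup of the kernel. You instead take a $2$-generated torsion-free group $N$ with exactly two conjugacy classes containing $F_2$ (via \cite[Theorem 1.1]{Os}; note you are implicitly using the part of that theorem guaranteeing that the set of orders of elements is preserved, which is what makes $N$ torsion-free and hence forces exactly two conjugacy classes) and set $G=N\times\BZ$ with $\varepsilon$ the projection. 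Your conjugacy computation in the direct product is correct and is precisely the observation the paper itself records just before \Cref{two-conjugacy-classes-semi-bz-no-bcyc}, namely that $N\times\BZ$ has $b\CYC$ when $N$ is torsion-free with two conjugacy classes. What your approach buys: it is much shorter, it outsources all the small-cancellation work to a single citation the paper already uses in \Cref{ex-bcyc-but-not-bvc}, and, since Osin's $N$ is $2$-generated, it produces a \emph{finitely generated} example for free, whereas the paper's $G$ is only countable and the finitely generated upgrade is deferred to a separate remark invoking \cite[Theorem 7.2]{HuOs}. What the paper's construction buys is an example in which the extension by $\BZ$ is a genuinely non-central semidirect product with a one-witness kernel, which sits more naturally alongside \Cref{two-conjugacy-classes-semi-bz-no-bcyc} and \Cref{bvc-fg-extension-by-integers}; but nothing in the statement requires this, so your direct product is a complete proof.
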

\begin{proof} We will first inductively construct a specific sequence $G_0 \subgroup G_1 \subgroup \ldots$ of groups, together with epimorphisms $\varepsilon_n \colon G_n \to \BZ$ such that $\varepsilon_{n+1}|_{G_n} = \varepsilon_{n}$ as follows: We let $G_0 = \langle a,b,c \rangle$ be a non-abelian free group and let $\varepsilon_0 \colon G_0 \to \BZ$ be defined by mapping $a$ to $1$ and $b, c$ to $0$. 
Now, suppose $G_n$ has been constructed such that $G_{n-1} \subgroup G_n$, together with an epimorphism $\varepsilon_n \colon G_n \to \BZ$. We enumerate all elements of $G_n = \{ 1 = g_0, g_1, g_2, \ldots \}$ and define $G_{n+1}$ as the following multiple HNN extension
$$
G_{n+1} = \langle G_n, \{t_i\}_{i \in \BN} \mid g_i^{t_i} = a^{\varepsilon_n(g_i)} \text{ if } \varepsilon_n(g_i) \neq 0 \text{ and } g_i^{t_i} = b \text{ otherwise } \rangle.
$$
We can then extend $\varepsilon_n$ to $G_{n+1}$ to define $\varepsilon_{n+1}$, by mapping the stable letters $t_i$ to $0 \in \BZ$. Finally, we let $G$ be the direct limit of the $G_n$. Observe that $G$ has $b\CYC$ with witnesses $\langle a \rangle$ and $\langle b \rangle$ and that there is an induced epimorphism $\varepsilon \colon G \to \BZ$ such that $\langle b,c \rangle \subgroup \ker(\varepsilon)$. Moreover, since the stable letters of the HNN extensions are contained in $\ker(\varepsilon)$, also $\ker(\varepsilon)$ has $b\CYC$ with only one cyclic group $\langle b\rangle $ as the witness.
\end{proof}

Using arguments as in the proof of \cite[Theorem 7.2]{HuOs} one can construct a group as in the previous proposition that is additionally finitely generated.

\begin{prop}\label{bvc-exp-conj-growth}
There exists a torsion-free finitely generated group $G$ of exponential conjugacy growth that has $b\CYC$ .
\end{prop}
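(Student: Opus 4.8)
The plan is to produce the exponential conjugacy growth from a single, heavily distorted witness element, exploiting the fact that distortion is exactly what is needed here: by \Cref{VC--undistorted-linear}, a $b\VCYC$ group all of whose infinite order elements are undistorted has at most linear conjugacy growth, so any example must contain a distorted witness. The model for such a witness is the element $a$ in the Baumslag--Solitar group $B = \langle a, t \mid t^{-1} a t = a^2\rangle$: here $a^{2^n} = t^{-n} a t^n$ shows that $|a^m|$ grows like $\log|m|$, and a short computation shows that two powers $a^m, a^{m'}$ are conjugate in $B$ precisely when $m$ and $m'$ have the same odd part. Consequently $B$ already contains exponentially many pairwise non-conjugate powers of $a$ of logarithmic length, namely the classes $[a^m]$ with $m$ odd. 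The only reason $B$ itself fails to be an example is that it is solvable and far from having $b\CYC$: elements of nonzero $t$-exponent are not conjugate to any power of $a$.

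First I would build a countable torsion-free group $\hat H$ in which every element is conjugate to a power of $a$ --- so that $\hat H$ has $b\CYC$ with the single witness $\langle a \rangle$ --- while keeping the relation $t^{-1} a t = a^2$ and the exponentially many odd-part classes intact. Starting from $\hat H_0 = B$, I would enumerate the elements and, for each $g$ not yet conjugate to a power of $a$ (equivalently, of nonzero $t$-exponent), adjoin an HNN extension $\langle \hat H_i, s \mid s^{-1} g s = a^{k}\rangle$ forcing $g \sim a^{k}$, and pass to the direct limit. Torsion-freeness is preserved at each step, and the relation $t^{-1}at = a^2$ is never destroyed. The essential point, which I would verify with Britton's Lemma exactly as in \Cref{hnn-conjugacy-primitivity}, is that such an extension only merges the class of $g$ into the single odd-part class of $k$ and creates no new coincidence $a^m \sim a^{m'}$ between powers of $a$ in different odd-part classes; hence all the classes $[a^m]$, $m$ odd, survive into $\hat H$.

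Finally I would apply \Cref{embedding-conjugacy-theorem} to embed $\hat H$ into a $2$-generated torsion-free group $G$ in which every element is conjugate to an element of $\hat H$ and in which non-conjugate elements of $\hat H$ remain non-conjugate. Property (a) of that lemma shows that every element of $G$ is conjugate to a power of $a$, so $G$ has $b\CYC$ with witness $\langle a \rangle$; property (c) keeps the classes $[a^m]$, $m$ odd, pairwise distinct in $G$. For the growth estimate, note that $a$ and $t$ become fixed words in the two generators of $G$, so the surviving relation $a^{2^n} = t^{-n} a t^n$ forces $|a^m|_G = O(\log|m|)$ and hence $|a^m|^c_G = O(\log|m|)$ for all $m$; therefore the roughly $2^{cn}$ odd numbers $m$ with $|a^m|^c_G \le n$ yield exponentially many conjugacy classes meeting the ball of radius $n$. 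Since a $2$-generated group has at most exponential conjugacy growth, this gives $g_c(n) \sim 2^n$, completing the construction.

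The main obstacle is the tension built into the second step: the distortion of $a$ is produced by the relation $t^{-1}at = a^2$, which is exactly the sort of relation that tends to collapse all powers of $a$ into one conjugacy class, so the delicate part is the Britton's-Lemma bookkeeping showing that the relations adjoined to enforce $b\CYC$ collapse everything \emph{except} the exponentially many odd-part classes we need, and that they leave the doubling relation --- hence the distortion --- untouched all the way through to the finitely generated limit.
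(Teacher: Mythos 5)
Your overall strategy is the same as the paper's: seed the construction with $BS(1,2)=\langle a,t\mid t^{-1}at=a^2\rangle$ to get a distorted element $a$ with exponentially many pairwise non-conjugate odd powers, force $b\CYC$ by a direct limit of multiple HNN extensions, and then pass to a $2$-generated example; your use of \Cref{embedding-conjugacy-theorem} for the last step is a legitimate packaging of what the paper carries out by hand via \cite[Theorem 6.2]{HuOs}, and your growth estimate at the end is the right one. However, there is a genuine gap in the middle step. You conjugate each leftover element $g$ to a power $a^{k}$, so in the HNN extension $\langle H,s\mid s^{-1}gs=a^{k}\rangle$ the associated subgroups are $A=\langle g\rangle$ and $B=\langle a^{k}\rangle$. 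To keep the classes $[a^{m}]$, $m$ odd, separated you want to apply \Cref{hnn-conjugacy-primitivity} to $f=a^{m}$, but that lemma requires $f$ not to be conjugate to any element of $A\cup B$, and here $a^{m}$ is conjugate to an element of $B$ whenever the odd part of $k$ divides that of $m$ --- in particular always if $k=\pm1$. So the lemma is not applicable, and the danger is real rather than formal: your selection criterion ``$g$ is not yet conjugate to a power of $a$'' does not prevent a \emph{proper power} of $g$ from being conjugate in $H$ to a power of $a$. If, say, $g^{2}\sim_{H}a^{3k}$, then in the extension $a^{2k}=s^{-1}g^{2}s\sim g^{2}\sim a^{3k}$, which merges two odd-part classes; iterated over the direct limit you lose all control over how many classes $[a^{m}]$ survive.

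The paper's construction avoids exactly this trap: the leftover elements (those lying in no conjugate of $\langle a\rangle$) are all conjugated to the \emph{other} generator $t$, whose cyclic subgroup meets the conjugates of $\langle a\rangle$ trivially, so \Cref{hnn-conjugacy-primitivity} applies to the elements $a^{2k+1}$ with $A=\langle g_{i}\rangle$, $B=\langle t\rangle$ at every stage; the price is a second witness $\langle t\rangle$, which is harmless for $b\CYC$. To keep $\langle a\rangle$ as the unique witness you would have to either choose the exponents $k$ adaptively so that every forced identification $a^{nk}\sim a^{m}$ arising from a relation $g^{n}\sim_{H} a^{m}$ is one that already holds, or restrict the enumeration to elements no nonzero power of which is conjugate into $\langle a\rangle$ --- and the excluded elements would then still need a second target. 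Either way the repair amounts to what the paper does, so you should restructure the middle step along those lines.
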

\begin{proof}
As a first step, we will prove that there exists a torsion-free countable group $G$ that contains an infinite cyclic subgroup $\langle a \rangle$ such that $G$ has $b\CYC$ and the elements $a^{2k+1}$ are pairwise non-conjugate. Moreover, there is an element $t \in G$, such that $a^t = a^2$. In the end, the latter property will ensure that the word length of an element $a^m$ is of the order of $\log(|m|)$. We will construct $G$ as direct limit of countable groups $G_0 \subgroup G_1 \subgroup \ldots$ as follows: We let $G_0 = \langle a,t \mid a^t = a^2 \rangle$ be the Baumslag-Solitar group $BS(1,2)$. Note that this group has exponential conjugacy growth, as the elements $a^{2k+1}$ are pairwise non-conjugate, see \cite[Example 2.3]{GubaSapir2010}. To construct $G_{n+1}$ from $G_n$ inductively, we first enumerate all elements in $G_n \setminus (\bigcup_{g\in G} \langle a^g \rangle) = \{g_1, g_2, \ldots \}$ and then form the multiple HNN extension
$$
G_{n+1} = \langle G_n, \{s_i\}_{i \in \BN} \mid g_i^{s_i} = t \rangle. 
$$
It follows inductively from \Cref{hnn-conjugacy-primitivity} that the elements $a^{2k+1}$ are pairwise non-conjugate viewed as elements of $G_{n+1}$. The direct limit $G = \bigcup_{n \geq 0} G_n$ then satisfies the previously required properties as any element in $G \setminus (\bigcup_{g\in G} \langle a^g \rangle)$ will be conjugate to the element $t \in G_0$.

In the second step we want to construct a group with the same properties but which is additionally finitely generated. In the following we will write $C$ instead of $G$ for the previously constructed countable group. We let 
$$
G(0) = C * F(x,y)
$$
and enumerate the elements of $C = \{ c_0 = 1, c_1, c_2, \ldots \}$ resp. $G(0) = \{ g_0 = 1, g_1, g_2, \ldots \}$. Note that $G(0)$ is hyperbolic relative to $C$ and $x, y$ generate a suitable subgroup of $G(0)$. In the following we will not distinguish notationally between elements of $G(0)$ and their representatives in quotients of $G(0)$. We will inductively construct quotient groups $G(i)$ of $G(0)$ such that
\begin{enumerate}
    \item the subgroup $C$ embeds under the quotient map into $G(i)$. Again, we will not distinguish between $C$ and its image in $G(i)$.
    \item $G(i)$ is torsion-free and  hyperbolic relative to $C$. Moreover, $x$ and $y$ generate a suitable subgroup of $G(i)$.
    \item The elements $c_j$ for $1 \leq j \leq i$, considered as elements in $G(i)$, lie in the subgroup generated by $x$ and $y$.
    \item In $G(i)$, for $1 \leq j \leq i$, the elements $g_j$ are conjugate to elements in $C$.
    \item The elements $a^{2k+1}$ are pairwise non-conjugate in $G(i)$.
\end{enumerate}

To construct $G(i+1)$ from $G(i)$, we proceed as follows: If $g_{i+1}$ is parabolic, we set $G'(i) = G(i)$, otherwise we choose an isomorphism $\iota \colon E_{G(i)}(g_{i+1}) \to \langle t \rangle$ and form the HNN extension
$$
G'(i) = \langle G(i), s \mid e^s = \iota(e), e \in E_{G(i)}(g_{i+1}) \rangle.
$$
Note that $\langle x,y \rangle$ is still a suitable subgroup of $G'(i)$  \cite[Corollary 2.16]{HuOs}.

In the next step, we apply \cite[Theorem 6.2]{HuOs} to $G'(i)$ and the elements $\{s, c_{i+1}\}$ resp. $\{ c_{i+1} \}$ in the case that $g_{i+1}$ is parabolic, to obtain a quotient $G(i+1)$ of $G'(i)$. Since $s$ will lie in the subgroup $\langle x,y\rangle$, we obtain a quotient map $G(i) \to G(i+1)$. By construction, (d) holds for $G(i+1)$. The properties (a)-(c) follow directly from \cite[Theorem 6.2]{HuOs}. The last statement (e) is a consequence of \Cref{malnormal-parabolics} and the fact that $G(i+1)$ is torsion-free.

Finally, let $G$ be defined as the direct limit of the $G(i)$. By (c) it follows that $G$ is 2-generated, property (d) implies that $G$ has $b\CYC$ since the same was already true for $C$. Moreover, by (e), the elements $a^{2k+1} \in C$ are pairwise non-conjugate in $G$ as well. Thus $G$ has exponential conjugacy growth.
\end{proof}

\begin{prop}
Let $Q$ be a countable group with $n$ conjugacy classes. Then there exists a torsion-free countable group $G$ with $n+1$ conjugacy classes such that $Q$ is a quotient of $G$.
\end{prop}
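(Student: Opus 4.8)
The plan is to build $G$ as an increasing union of torsion-free groups $G_0 \subgroup G_1 \subgroup \cdots$ obtained by iterated (multiple) HNN extensions, while simultaneously maintaining a surjection onto $Q$ that will be used to \emph{separate} conjugacy classes from below, the HNN relations themselves being used to \emph{fuse} classes from above. Fix conjugacy class representatives $1 = c_1, c_2, \ldots, c_n$ of $Q$. In $G$ I single out $n+1$ designated representatives: the identity, one extra element $z$ destined to map to $1 \in Q$, and elements $z_2, \ldots, z_n$ destined to map to $c_2, \ldots, c_n$. To start, choose a surjection $\pi_0 \colon F \to Q$ from a free group $F$ of countable rank, set $G_0 = F * \langle z \rangle$ with $z$ a further free generator, extend $\pi_0$ by $z \mapsto 1$, and pick $z_i \in F$ with $\pi_0(z_i) = c_i$ for $2 \le i \le n$.

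Given $G_n$ together with an epimorphism $\pi_n \colon G_n \to Q$ satisfying $\pi_n(z) = 1$ and $\pi_n(z_i) = c_i$, I would form the multiple HNN extension
$$
G_{n+1} = \langle G_n, \{ t_g \}_{g \in G_n \setminus \{1\}} \mid g^{t_g} = z_{i(g)} \rangle,
$$
where for each non-trivial $g$ the target $z_{i(g)}$ is the designated representative of the unique conjugacy class of $Q$ containing $\pi_n(g)$ (so $z_{i(g)} = z$ precisely when $\pi_n(g) = 1$). Since $G_n$ is torsion-free, $g$ and $z_{i(g)}$ both have infinite order, so each relation is an isomorphism of infinite cyclic subgroups and $G_{n+1}$ is again torsion-free with $G_n$ embedded. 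Crucially, because $\pi_n(g)$ is conjugate in $Q$ to $c_{i(g)} = \pi_n(z_{i(g)})$, I can pick $w_g \in Q$ with $w_g^{-1} \pi_n(g) w_g = \pi_n(z_{i(g)})$ and extend $\pi_n$ to an epimorphism $\pi_{n+1} \colon G_{n+1} \to Q$ by $\pi_{n+1}(t_g) = w_g$; the defining relations are respected by construction. Let $G = \bigcup_n G_n$ and $\pi = \varinjlim \pi_n \colon G \to Q$; as a countable increasing union of countable torsion-free groups, $G$ is countable and torsion-free, and $\pi$ is a surjection exhibiting $Q$ as a quotient.

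For the upper bound on the number of classes, any $g \in G$ lies in some $G_n$, and if $g \ne 1$ the stable letter $t_g \in G_{n+1}$ gives $g^{t_g} = z_{i(g)}$, so $g$ is conjugate in $G$ to one of $z, z_2, \ldots, z_n$; hence $G$ has at most the $n+1$ classes $[1], [z], [z_2], \ldots, [z_n]$. For the lower bound I would argue these are pairwise distinct: the elements $z_2, \ldots, z_n$ have images $c_2, \ldots, c_n$ that are pairwise non-conjugate and non-trivial in $Q$, so they lie in distinct non-trivial classes of $G$, each different from $[z]$ because $\pi(z) = 1$ is not conjugate to any $c_i$; and $z \ne 1$ since $z$ is a free generator of $G_0$ and the base embeds at every stage, so $[z] \ne [1]$. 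Thus $G$ has exactly $n+1$ conjugacy classes.

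The point to watch — and the reason for the ``$+1$'' — is that the quotient map cannot separate $[z]$ from the trivial class, since both map to $1 \in Q$; this is precisely the class into which all non-identity elements of $\ker \pi$ are fused, and it is forced to be non-trivial only because $G$ is torsion-free while $Q$ need not be. Accordingly, the one genuinely non-formal step is the bookkeeping that makes $\pi$ descend through every extension: choosing each fusion target to be the representative of the $\pi$-class of $g$ is what guarantees simultaneously that $\pi_{n+1}$ exists and that no two designated representatives are ever identified. Everything else — torsion-freeness and base-embedding of (multiple) HNN extensions, and exhaustion of $G$ by the $G_n$ — is standard, and, in contrast to the earlier constructions in this section, no appeal to \Cref{hnn-conjugacy-primitivity} is needed, as all required non-conjugacy is read off directly from $\pi$.
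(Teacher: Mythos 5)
Your proof is correct and follows essentially the same strategy as the paper's: iterated multiple HNN extensions that conjugate every non-trivial element onto a designated preimage of its image's conjugacy class in $Q$, while maintaining an epimorphism to $Q$ that separates the $n+1$ target classes. The only (harmless) differences are bookkeeping: you send each stable letter to a conjugating element $w_g \in Q$ rather than first conjugating the associated subgroup inside $G_n$ and sending stable letters to $1$, and you guarantee a non-trivial kernel element by adjoining the free factor $\langle z \rangle$ rather than assuming $\ker(\pi_0) \neq 1$.
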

\begin{proof}
Let $\{ q_0 = 1, q_1, \ldots, q_{n-1} \} \subset Q$ be representatives of conjugacy classes of elements in $Q$. We let $G_0$ be a countable free group and $\varepsilon_0 \colon G_0 \to Q$ be an epimorphism. We can moreover assume that $\ker(\varepsilon_0)$ is non-trivial. Choose preimages $g_1, \ldots, g_{n-1}$ of $q_1, \ldots, q_{n-1}$ under $\varepsilon_0$ and choose some non-trivial element $g_0 \in \ker(\varepsilon_0)$. 

We now inductively define countable torsion-free groups $G_m$ together with epimorphisms $\varepsilon_m \colon G_m \to Q$ such that $G_{m-1} \subgroup G_m$ and $\varepsilon_{m}|_{G_{m-1}} = \varepsilon_{m-1}$.

Suppose $G_m$ and $\varepsilon_m$ have already been constructed. Enumerate all non-trivial elements $\{ h_1, h_2, \ldots \}$ of $G_m$ and form the multiple HNN extension
$$
G_{m+1} = \langle G_m, \{ t_i \}_{i \in \BN} \mid \text{ relations explained below } \rangle.
$$
We know that $\varepsilon_n(h_i)$ is conjugate to $q_{j_i}$ for some $j_i$. If $q_{j_i} = 1$, then we impose the relation $h_i^{t_i} = g_0$.
Otherwise there is some $\alpha_i \in G_n$ such that $\varepsilon_m( \alpha_i^{-1} h_i \alpha_i) = q_{j_i}$. We then impose the relation $(\alpha_i^{-1} h_i \alpha_i)^{t_i} = g_i$. With these choices we can extend $\varepsilon_m$ to $\varepsilon_{m+1} \colon G_{m+1} \to Q$ by mapping all stable letters $t_i$ to the trivial element in $Q$.

Finally, we let $G$ be the direct limit of the groups $G_m$. By construction, representatives of the conjugacy classes of $G$ are $\{1, g_0, g_1, \ldots, g_{n-1} \}$.
\end{proof}

Note that the above construction is optimal in the sense that if $Q$ has $n$ conjugacy classes and contains torsion then any torsion-free group $G$ that surjects onto $Q$ must have at least $n+1$ conjugacy classes.

Now let $Q$ be a countable group with finitely many conjugacy classes but infinitely many conjugacy classes of finite subgroups. Then $Q$ has $b\CYC$ but not BVC, see \Cref{ex-bcyc-but-not-bvc} for the construction of such a group $Q$. Thus the previous proposition also shows:
 
\begin{cor}\label{bvc-no-inheritance-to-quotients}
There exists a group $G$ with BVC such that a quotient of $G$ fails to have BVC.
\end{cor}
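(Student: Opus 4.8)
The plan is to read this corollary off directly from the proposition immediately preceding it, combined with the group produced in \Cref{ex-bcyc-but-not-bvc}. First I would take $Q$ to be the finitely generated group of \Cref{ex-bcyc-but-not-bvc}: it has exactly three conjugacy classes and contains $(\BZ/2)^{\infty}$, so it has $b\CYC$ (indeed $b\VCYC$) but fails BVC because the orders of its finite subgroups are unbounded, contradicting \Cref{BVCfinitesubgroup}. Being finitely generated, $Q$ is in particular countable with $n = 3$ conjugacy classes, so the hypotheses of the preceding proposition are met.

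Applying that proposition to $Q$ yields a torsion-free countable group $G$ with $n+1 = 4$ conjugacy classes that surjects onto $Q$. The key observation is then that $G$ itself has BVC. Since $G$ is torsion-free, the three notions BVC, $b\CYC$ and $b\VCYC$ coincide for it, and any group with finitely many conjugacy classes automatically has $b\CYC$: one takes the cyclic subgroups generated by a set of conjugacy-class representatives as a witness, since every element — hence every cyclic subgroup — is conjugate into one of them. Alternatively this follows immediately from \Cref{torsionfreeBVC}, as every element of $G$ is conjugate to (a first power of) one of the finitely many representatives. Thus $G$ has BVC while its quotient $Q$ does not, which is exactly the assertion.

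The argument is essentially a bookkeeping combination of the two cited results, so there is no serious obstacle; the one point that genuinely requires care is ensuring that the failure of BVC for $Q$ is real and not merely a failure of the weaker property $b\CYC$. This is precisely where the choice in \Cref{ex-bcyc-but-not-bvc} matters: $Q$ \emph{does} have $b\CYC$, so the phenomenon is really that BVC detects finite subgroups whereas $b\CYC$ does not, and it is the unboundedness of the orders of finite subgroups in $Q$ — obstructing BVC via \Cref{BVCfinitesubgroup} — that makes $Q$ a valid witness. One should also confirm the mild hypothesis of the preceding proposition, namely that $Q$ is countable, which holds because $Q$ is finitely generated.
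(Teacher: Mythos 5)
Your proposal is correct and is essentially identical to the paper's own argument: the paper likewise feeds the group $Q$ of \Cref{ex-bcyc-but-not-bvc} (which has $b\CYC$ but fails BVC by \Cref{BVCfinitesubgroup}) into the preceding proposition to obtain a torsion-free countable group $G$ with finitely many conjugacy classes surjecting onto $Q$, and concludes that $G$ has BVC via \Cref{torsionfreeBVC}. Your extra care in spelling out why $G$ has BVC and why the failure for $Q$ is genuinely a failure of BVC rather than of $b\CYC$ is exactly the right bookkeeping.
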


\printbibliography

\end{document}